\newtheorem{remark}{Remark}
\newtheorem{scheme}{Scheme}
\newtheorem{assumption}[theorem]{Assumption}
\definecolor{db}{rgb}{0.0470,0,0.5294}
\definecolor{dg}{rgb}{0.0,0.392,0.0}
\definecolor{firebrick}{rgb}{0.698,0.133,0.133}
\definecolor{bl}{rgb}{0.0,0.0,0.0}
\definecolor{linen}{rgb}{0.980,0.941,0.902}
\definecolor{ivory}{rgb}{1.0,1.0,0.941}
\definecolor{aliceblue}{rgb}{0.941,0.973,1.0}
\definecolor{beige}{rgb}{0.961,0.961,0.863}
\definecolor{tan}{rgb}{0.824,0.706,0.549}
\definecolor{lightsteelblue}{rgb}{0.690,0.769,0.871}
\definecolor{paleturquoise}{rgb}{0.686,0.933,0.933}
\definecolor{lightblue}{rgb}{0.678,0.847,0.902}
\definecolor{skyblue}{rgb}{0.529,0.808,0.922}
\definecolor{palegoldenrod}{rgb}{0.933,0.910,0.667}
\definecolor{lightgoldenrod}{rgb}{0.933,0.867,0.510}
\definecolor{lightyellow}{rgb}{1.0,1.0,0.878}
\definecolor{yellow}{rgb}{1.0,1.0,0.0}
\definecolor{lightyellow1}{rgb}{1.0,1.0,0.878}
\definecolor{lemonchiffon}{rgb}{1.0,0.980,0.804}
\definecolor{myyellow}{rgb}{1,1,.9}
\definecolor{darkgreen}{rgb}{0.0,0.392,0.0}
\definecolor{darkviolet}{rgb}{0.580,0.0,0.827}
\definecolor{lightsalmon}{rgb}{1.0,0.627,0.478}
\definecolor{orange}{rgb}{1.0,0.647,0.0}
\definecolor{darkblue}{rgb}{0.00,0.00,0.55}
\numberwithin{equation}{section}
\Crefname{table}{Table}{Tables}
\Crefname{figure}{Figure}{Figures}
\newcommand\titlelowercase[1]{\texorpdfstring{\lowercase{#1}}{#1}}
\begin{document}
	
	\title{\large{D\titlelowercase{ivergence-free} P\titlelowercase{reserving} M\titlelowercase{ix} F\titlelowercase{inite} E\titlelowercase{lement} M\titlelowercase{ethods} \titlelowercase{for}  F\titlelowercase{ourth-order} A\titlelowercase{ctive} F\titlelowercase{luid} 
    M\titlelowercase{odel}  }} 
	\author{Nan Zheng
			\thanks{
            School of Mathematics, Shandong University, Jinan, Shandong 250100, People’s Republic of China. Email: \href{mailto:202311835@mail.sdu.edu.cn}{202311835@mail.sdu.edu.cn}. } 
            \and  
            Xu Guo
            \thanks{
            Geotechnical and Structural Engineering Research Center, Shandong University, Jinan, 256001, Shandong, China.    
            Email: \href{mailto:guoxu@sdu.edu.cn}{guoxu@sdu.edu.cn}.
            }
            \and  
            Wenlong Pei
            \thanks{
            Department of Mathematics, The Ohio State University, Columbus, OH 43210,USA. Email: \href{mailto:pei.176@osu.edu}{pei.176@osu.edu}.} 
            \and  
            Wenju Zhao
            \thanks{
            School of Mathematics, Shandong University, Jinan, Shandong 250100, People’s Republic of China.
            Email: \href{mailto:zhaowj@sdu.edu.cn}{zhaowj@sdu.edu.cn}.
            }
		    }
	\date{\emty}
	\maketitle

	\begin{abstract}
		This paper is concerned with mixed finite element method (FEM) for solving the two-dimensional, nonlinear fourth-order active fluid equations. 
        By introducing an auxiliary variable $w=-\Delta u$,
        the original fourth problem is transformed into a system of second-order equations, which relaxes the regularity requirements of standard $H^2$-conforming finite spaces. 
        To further enhance the robustness and efficiency of the algorithm, an additional auxiliary variable $\phi$, treated analogously to the pressure, is introduced, leading to a divergence-free preserving mixed finite element scheme. 
        A fully discrete scheme is then constructed by coupling the spatial mixed FEM with the variable-step Dahlquist-Liniger-Nevanlinna (DLN) time integrator. 
        The boundedness of the scheme and corresponding error estimates can be rigorously proven under appropriate assumptions due to unconditional non-linear stability and second-order accuracy of the DLN method. 
        To enhance computational efficiency in practice, we develop an adaptive time-stepping strategy based on a minimum-dissipation criterion. 
        Several numerical experiments are displayed to fully validate the theoretical results and demonstrate the accuracy and efficiency of the scheme for complex active fluid simulations.

	\end{abstract}
	
	\begin{keywords}
		Active fluid, fourth-order PDEs, divergence-free persevered mixed finite element method, variable time step, error estimates
	\end{keywords}
	
	\begin{AMS}
		35Q92, 65M60, 76D07, 35G20, 76A02
	\end{AMS}

    \section{Introduction}
    In this paper, we consider a divergence-free persevered mixed finite element method for solving the following non-linear fourth-order active fluid equations on the domain 
    $D \subset \mathbb{R}^2$ and time interval $[0,T]$: the velocity of fluid $u(x,t)$ and pressure $p(x,t)$ are governed by 
    \begin{equation}\label{ACEs}
        \begin{cases}
            u_{t} - \mu \Delta u + \gamma \Delta^2 u+\nu (u \cdot \nabla)u  + \rho u +  \lambda |u|^2 u+ \nabla p = f,    
            &\text{in}  ~D \times (0, T], \\
            \nabla \cdot u = 0,   
            &\text{in} ~ D \times (0, T],
        \end{cases}
    \end{equation}
    subject to the following initial-boundary conditions: 
    \begin{equation*}
        u(x,0) = u_{0}~ \text{in} ~D, \indent
        u =\Delta u =  0 ~\text{on}~ \partial D.
    \end{equation*}
    Here, the domain $D$ is bounded with Lipschitz boundary 
    $\partial D$, and $f$ is the source function. 
    Non-negative parameters $\mu, \gamma, \nu$ represent the viscosity coefficient, the generic stability coefficient, and the density coefficient, respectively.
    Terms $\rho u$ ($\rho \in \mathbb{R}$) and $\lambda |u|^2 u$ ($\lambda \geq 0$) correspond to a quartic Landau velocity potential 
    \cite{ramaswamy2019active,wensink2012meso,toner1998flocks}.

    Active fluid, consisting of self-propelled particles capable of converting energy into motion, represents a distinctive class of nonequilibrium systems \cite{ramaswamy2019active,annurev-fluid-010816-060049}. 
    In recent years, the mathematical modeling and numerical analysis of active fluid have increasingly attracted attention due to their significant applications \cite{fielding2022rheology,MR4049919,MR4131825,MR4736040}, since these types of systems can capture the complex fluid behaviors exhibited by active suspensions, such as those found in bacterial superfluids \cite{pnas.1722505115}, turbulence in microswimmer suspensions  \cite{qi2022emergence} and  similar active suspensions.

    The mathematical characterization of active fluid dynamics often involves generalized forms of the Navier-Stokes equations augmented with higher-order dissipative terms and nonlinear active forcing. 
    Despite the effectiveness in capturing rich dynamical behaviors, the non-linear system of fourth-order equations \eqref{ACEs} presents substantial analytical and numerical challenges.
    To address these issues and reduce the regularity requirements, we reformulate the original fourth-order active fluid equations into a system of second-order equations. 
    By introducing an auxiliary variable  \( w = -\Delta u \),
    we achieve the following equivalent reformulation of \eqref{ACEs}
    \begin{equation}\label{ACEs 2nd}
        \begin{cases}
            u_{t} - \mu \Delta u - \gamma \Delta w + \nu (u \cdot \nabla) u + \rho u + \lambda |u|^2 u + \nabla p = f, &\text{in} ~ D \times (0, T], \\
            w = -\Delta u, &\text{in} ~ D \times (0, T], \\
            \nabla \cdot u = 0, &\text{in} ~ D \times (0, T],
        \end{cases}
    \end{equation}
    with the modified initial-boundary conditions:
    \begin{equation} \label{ACEs-2nd-2}
        u(x,0) = u_{0} ~ \text{in} ~ D, \quad u = w = 0 ~ \text{on} ~ \partial D.
    \end{equation}
    which effectively reduces the complexity associated with the biharmonic operator $\Delta^2 u$.
    The resulting system \eqref{ACEs 2nd}-\eqref{ACEs-2nd-2} is eligible for the use of finite element methods based on $H_0^1$-conforming basis functions without 
    the restrictive \( H^2 \)-regularity requirements.
    More importantly, the auxiliary variable inherently satisfies a divergence-free constraint 
    $(\nabla \cdot w = 0)$ preserving physical fidelity and the incompressibility condition, which are crucial for realistic simulations \cite{da2025error,da2017divergence}.

    Finite element methods (FEM), known for their robustness, flexibility, and effectiveness, are extensively employed in spatial discretizations of Navier-Stokes system related fluid dynamics models \cite{MR4736040,MR4293957, he2007stability,MR4835947,abgrall2023hybrid,abgrall2020analysis,ayuso2005postprocessed,MR3147678,lan2025robust}.
    For temporal discretization, a variety of approaches have been thoroughly analyzed, including the Euler scheme, Crank-Nicolson related scheme, Runge-Kutta scheme and backward differentiation formula scheme \cite{ern2022invariant,MR4835947,AAMM-16-5,hou2025unconditionally,baker2024numerical,banjai2012runge,BAI20123265,decaria2022general,gottlieb2022high}, etc. 
    Recently, much effort has been devoted to the numerical analysis of variable time-stepping schemes and their potential for time adaptivity \cite{ait2023time,dahlquist1983stability,MR4328513,MR4471049}. 
    The family of Dahlquist-Liniger-Nevanlinna (DLN) methods (with one parameter $\theta \in [0,1]$), which ensures unconditional stability and second-order accuracy for general dissipative nonlinear systems with arbitrary time grids \cite{dahlquist1983stability,LPT21_AML,LPT23_ACSE},
    has been proven successful in simulations of stiff differential equations and complicated fluid models \cite{QHPL21_JCAM,layton2022analysis,MR4866010,pei2024semi,CLPX2025_JSC,SP24_IJNAM}. 
    Given the time interval $[0,T]$ with its partition $\{ t_{n} \}_{n=0}^{M}$ and 
    the initial value problem $y'(t) = g(t,y(t))$ with $t \in [0,T], \ y(0) = y_{0} \in \mathbb{R}^{d}$, 
    the family of variable time-stepping DLN methods for the problem reads 
    \begin{gather}
		\sum_{\ell =0}^{2}{\alpha _{\ell }}y_{n-1+\ell }
		= \widehat{k}_{n} g \Big( \sum_{\ell =0}^{2}{\beta _{\ell }^{(n)}}t_{n-1+\ell } ,
		\sum_{\ell =0}^{2}{\beta _{\ell }^{(n)}}y_{n-1+\ell} \Big), \qquad n=1,\ldots,M-1.
		\label{eq:1leg-DLN}
	\end{gather}
    $y_{n}$ represents the DLN solution of $y(t)$ at time $t_{n}$. The coefficients of the DLN method in \eqref{eq:1leg-DLN} are 
	\begin{gather}
        \label{DLNcoeff}
		\begin{pmatrix}
			\alpha _{2} \vspace{0.2cm} \\
			\alpha _{1} \vspace{0.2cm} \\
			\alpha _{0} 
		\end{pmatrix}
		= 
		\begin{pmatrix}
			\frac{1}{2}(\theta +1) \vspace{0.2cm} \\
			-\theta \vspace{0.2cm} \\
			\frac{1}{2}(\theta -1)
		\end{pmatrix}, \ \ \ 
		\begin{pmatrix}
			\beta _{2}^{(n)}  \vspace{0.2cm} \\
			\beta _{1}^{(n)}  \vspace{0.2cm} \\
			\beta _{0}^{(n)}
		\end{pmatrix}
		= 
		\begin{pmatrix}
			\frac{1}{4}\Big(1+\frac{1-{\theta }^{2}}{(1+{%
					\varepsilon _{n}}{\theta })^{2}}+{\varepsilon _{n}}^{2}\frac{\theta (1-{%
					\theta }^{2})}{(1+{\varepsilon _{n}}{\theta })^{2}}+\theta \Big)\vspace{0.2cm%
			} \\
			\frac{1}{2}\Big(1-\frac{1-{\theta }^{2}}{(1+{\varepsilon _{n}}{%
					\theta })^{2}}\Big)\vspace{0.2cm} \\
			\frac{1}{4}\Big(1+\frac{1-{\theta }^{2}}{(1+{%
					\varepsilon _{n}}{\theta })^{2}}-{\varepsilon _{n}}^{2}\frac{\theta (1-{%
					\theta }^{2})}{(1+{\varepsilon _{n}}{\theta })^{2}}-\theta \Big)%
		\end{pmatrix}.
	\end{gather}
    $\varepsilon _{n} = (k_n - k_{n-1})/(k_n + k_{n-1}) \in (-1,1)$ is the step variability.
    The method is reduce to the midpoint rule on $[t_{n}, t_{n+1}]$ if $\theta = 1$ and midpoint rule on $[t_{n-1}, t_{n+1}]$ if $\theta = 0$.

    Motivated by these advances, our study integrates mixed finite element spatial discretization with DLN temporal discretization, and thus proposes a computationally efficient, stable, and accurate framework for analyzing the sophisticated dynamics exhibited by active fluid.
    In addition, an adaptive time-stepping strategy, inspired by the minimal dissipation criterion of Capuano et al. \cite{capuano2017minimum}, is further developed to balance the conflict between accuracy and computational costs.

    The main contributions of this report are to:  
    \begin{itemize}
        \item construct a divergence-free preserving mixed finite element spatial discretization for the fourth-order active fluid equations \eqref{ACEs}, which reduces complexity and relaxes regularity requirements,
        \item     
        utilize the variable time-stepping DLN temporal integrator for full discretization and present rigorous proof that the fully discrete algorithm is unconditionally stable in kinetic energy under arbitrary time grids,
        \item 
        carry out a detailed error estimate for both velocity and pressure under moderately relaxed regularity requirements and time step constraints,
        \item
        design a time adaptive strategy by the minimal dissipation criterion, which  
        significantly enhance time efficiency in practice.
    \end{itemize}

    The remainder of this paper is structured as follows.
    In Section \ref{sec:sec2}, we introduce necessary preliminaries and notations. 
    In Section \ref{sec:sec3}, we establish the fully discrete scheme for the fourth-order active fluid equations \eqref{ACEs} based on a divergence-free preserving mixed finite element spatial discretization and variable time-stepping DLN time integrator. 
    By adding a mild time step restriction, 
    we prove rigorously that the scheme is long-time stable in kinetic energy under an arbitrary time step sequence. 
    Detailed error estimates of the resulting fully discrete scheme are presented 
    in Section \ref{sec:sec4}.
    In Section \ref{sec:sec5}, a series of numerical experiments are offered to validate the theoretical findings.
    Section \ref{sec:sec6} summarizes the main results of the report and outlines potential directions for future research.

    \section{Notation and Preliminaries}
    \label{sec:sec2}
    For \( 1 \leq p \leq \infty \) and \( r \in \mathbb{N}^+ \cup \{ 0 \} \), 
    \( W^{r,p}(D) \) represents the usual Sobolev space with norm \( \|\cdot\|_{W^{r,p}} \). 
    The case $p=2$ is reduced to the Hilbert space $H^r(D)$ with norm \( \|\cdot\|_r \).
    In particular, the Lebesgue space \( L^2(D) := H^{0}(D) \) is endowed with the standard inner product $(\cdot, \cdot)$  and norm \( \|\cdot\| \) (or $\| \cdot \|_{0}$). 
    We need the following Bochner spaces
    \begin{align*}
        L^{\infty}([0,T];H^r(D)) &:= \big\{ v(x,t): 
        \| v \|_{\infty, r} = \sup_{0 \leq t \leq T} \|v(t)\|_r < \infty \big\}, \\
        L^p([0,T]; H^r(D)) &:= \big\{ v(x,t): 
        \| v \|_{p,r} = \Big( \int_{0}^{T}\|v(t)\|_r^p\mathrm{d}t \Big)^\frac{1}{p} 
        < \infty \big\}.
    \end{align*}
    The solution spaces for velocity $u$ and pressure $p$ in \eqref{ACEs} are 
    \begin{align*}
        X = \Big\{ u \in [H^1(D)]^2 : u = 0 ~\text{on}~ \partial D \Big\},
        \quad 
        &
        Q = \Big\{ p \in L^2(D) : \int_{D} p \,\mathrm{d}x = 0 \Big\}.
    \end{align*}
    The divergence-free space $V$ is 
    \begin{equation}\label{V_space}
        V = \big\{ v \in X: \nabla \cdot v =0 ~\text{in} ~D  \big\}.
    \end{equation}
    We define the skew-symmetric trilinear form
    \begin{align}\label{b form}
        b(u,v,w) = \frac{1}{2}(u \cdot \nabla v, w) - \frac{1}{2}(u \cdot \nabla w, v), \quad 
        u,v,w \in X.
    \end{align}
    We have the following estimates for $b(\cdot,\cdot,\cdot)$ (see \cite{he2007stability,MR1043610,temam2001navier,Ing13_IJNAM} for proof)
    \begin{subequations} 
        \begin{align}
            &b(u,v,w) \leq C \| \nabla u \| \|\nabla v\| \|\nabla w\|,  \quad \forall u,v,w \in X,\label{b 1 1 1}
            \\
            &b(u,v,w) \leq C \| u \|^{\frac{1}{2}} \|\nabla u\|^{\frac{1}{2}} \|\nabla v\| \|\nabla w\|,  \quad \forall u,v,w \in X, \label{b 0 1 1 1}
            \\
            &b(u,v,w) \leq C \| u \|_{1} \| v \|_{1} \| w \|^{\frac{1}{2}} 
            \| w \|_{1}^{\frac{1}{2}},   \label{b 1 1 0 1} \\
            &b(u,v,w) \leq C \|u\| \|v\|_2 \|w\|_1, \quad \forall u,w \in X, \ v \in [H^2(D)]^2, \label{b 0 2 1} \\
            &b(u,v,w) \leq C \|u\|_{2} \|\nabla v \| \|w\|, \quad \forall u,w \in X, \ 
            v \in \big([H^2(D)]^2 \big)  \cap X, \label{b 2 1 0} \\
            &b(u,v,w) \leq C \|u\|_{1} \| v \|_{2} \|w\|, \quad \forall u,w \in X, \ 
            v \in \big([H^2(D)]^2 \big) \cap X, \label{b 1 2 0} 
        \end{align}
    \end{subequations}
    where \( C > 0 \) is a positive constant which only depends on the domain \( D \).
    The variational formulation of \eqref{ACEs 2nd}-\eqref{ACEs-2nd-2} is: the pair 
    $(u,w,p) \in (X,V,Q)$ satisfies
    \begin{align} 
        \begin{split} \label{variational-formula-1} 
            &(u_{t}, v)
            + \mu(\nabla u, \nabla v) 
            +\gamma (\nabla w,\nabla v)
            + \nu b( u,  u, v)  
            + \rho (u,v) 
            + \lambda (|u|^2 u ,v)
            \\
            &\quad 
            - (p, \nabla \cdot v)
            = (f, v), 
        \end{split}
        \\
        &(w,\varphi) = (\nabla u,\nabla \varphi), \label{variational-formula-2}
        \\
        &(\nabla \cdot u, q) = 0.  \label{variational-formula-3}
    \end{align} 
    for all $(v,\varphi,q) \in (X,V,Q)$ and all $t \in (0,T]$. 
    By introducing an auxiliary variable $\phi \in Q$, we have the following equivalent formulation: the pair $(u,w,\phi,p) \in (X,X,Q,Q)$ satisfies 
    \begin{align} 
        \begin{split} \label{variational_formula_second_1} 
            &(u_{t}, v)
            + \mu(\nabla u, \nabla v) 
            +\gamma (\nabla w,\nabla v)
            + \nu b( u,  u, v)  
            + \rho (u,v) 
            + \lambda (|u|^2 u ,v)
            \\
            &\quad 
            - (p, \nabla \cdot v)
            = (f, v), 
        \end{split}
        \\
        &(w,\varphi)-(\phi,\nabla \cdot \varphi) = (\nabla u,\nabla \varphi), \label{variational_formula_second_2}
        \\
        &(\nabla \cdot u, q) = 0,  \label{variational_formula_second_3}
        \\
        &(\nabla \cdot w, \zeta) = 0,  \label{variational_formula_second_4}
    \end{align}
    for all $(v,\varphi,\zeta,q) \in (X,X,Q,Q)$ and all $t \in (0,T]$.
    To handle the non-linear term  $|u|^2 u$ in equations \eqref{variational-formula-1} 
    and  \eqref{variational_formula_second_1}, 
    we need the following lemma about monotonicity and continuity properties on $\mathbb{R}^{2}$ (see \cite{glowinski1975approximation,deugoue2022numerical} for proof)
    \begin{lemma}\label{vector_norm_inequality_lem} 
        For all $x, y \in \mathbb{R}^2$, the following inequalities hold:  
        \begin{align}
            &\text{monotonicity: } 
            \big( |x|^{2} x - |y|^{2} y, x-y \big)_{\mathbb{R}^2}  \geq C |x-y|^4,  \label{vector_norm_inequality_1}  \\
            &\text{continuity: } \ \ \ \ 
            \big| |x|^{2} x - |y|^{2} y \big| \leq C \big( |x| + |y| \big)^{2} |y - x|,  \label{vector_norm_inequality_2}
        \end{align}
        where $C > 0$ is a constant independent of $x$ and $y$, and \( (\cdot,\cdot)_{\mathbb{R}^2} \) denotes the standard Euclidean inner product on $\mathbb{R}^2$.
    \end{lemma}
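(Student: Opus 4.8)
\noindent\emph{Proof strategy.} The plan is to deduce both inequalities from the smoothness of the polynomial vector field $\psi(z):=|z|^{2}z$ on $\mathbb{R}^{2}$, whose Jacobian is the symmetric matrix $D\psi(z)=|z|^{2}I+2\,zz^{\top}$, together with the integral representation along the segment joining the two points. Setting $v=x-y$ and $z_{t}=y+tv$, I would start from
\begin{equation*}
  |x|^{2}x-|y|^{2}y \;=\; \psi(x)-\psi(y) \;=\; \int_{0}^{1} D\psi(z_{t})\,v\,\mathrm{d}t ,
\end{equation*}
and record the two elementary facts needed: $\|D\psi(z)\|\le 3|z|^{2}$ in operator norm (its eigenvalues are $3|z|^{2}$ and $|z|^{2}$), and the semidefiniteness identity $\big(D\psi(z)v\big)\cdot v=|z|^{2}|v|^{2}+2(z\cdot v)^{2}\ge |z|^{2}|v|^{2}$.

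For the continuity estimate \eqref{vector_norm_inequality_2} I would take norms in the representation and use $|z_{t}|\le(1-t)|y|+t|x|\le|x|+|y|$, which gives $\big||x|^{2}x-|y|^{2}y\big|\le 3|x-y|\int_{0}^{1}|z_{t}|^{2}\,\mathrm{d}t\le 3(|x|+|y|)^{2}|x-y|$; a completely elementary alternative is to split $|x|^{2}x-|y|^{2}y=|x|^{2}(x-y)+(|x|^{2}-|y|^{2})y$ and bound $\big||x|^{2}-|y|^{2}\big|=\big||x|-|y|\big|(|x|+|y|)\le|x-y|(|x|+|y|)$. For the monotonicity estimate \eqref{vector_norm_inequality_1} I would dot the representation with $v=x-y$ and invoke the semidefiniteness identity above to obtain
\begin{equation*}
  \big(|x|^{2}x-|y|^{2}y,\;x-y\big)_{\mathbb{R}^{2}} \;\ge\; |x-y|^{2}\int_{0}^{1}\bigl|y+t(x-y)\bigr|^{2}\,\mathrm{d}t ,
\end{equation*}
and then evaluate the scalar integral exactly by completing the square, $\int_{0}^{1}|y+t(x-y)|^{2}\,\mathrm{d}t=\bigl|\tfrac{x+y}{2}\bigr|^{2}+\tfrac{1}{12}|x-y|^{2}\ge\tfrac{1}{12}|x-y|^{2}$, which yields \eqref{vector_norm_inequality_1} with $C=\tfrac{1}{12}$.

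The only step that genuinely requires care is precisely this last lower bound. A crude expansion $\big(|x|^{2}x-|y|^{2}y,x-y\big)=|x|^{4}+|y|^{4}-(|x|^{2}+|y|^{2})(x\cdot y)$ combined with $x\cdot y\le|x||y|$ only produces $(|x|^{3}-|y|^{3})(|x|-|y|)$, which controls the portion of $|x-y|^{4}$ coming from the mismatch of the radii but misses the angular contribution entirely (it vanishes when $|x|=|y|$, e.g. $x=-y$), so a naive argument fails. The segment-integral device repairs this, the underlying geometric point being that the segment from $y$ to $x$ cannot stay within distance $o(|x-y|)$ of the origin since $\max(|x|,|y|)\ge\tfrac12|x-y|$. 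In the two-dimensional setting one may instead finish purely algebraically: with $s=|x|$, $r=|y|$ and $u=x\cdot y\in[-sr,sr]$, the map $u\mapsto s^{4}+r^{4}-(s^{2}+r^{2})u-C(s^{2}+r^{2}-2u)^{2}$ is concave, so its nonnegativity on $[-sr,sr]$ reduces to the two endpoints $u=\pm sr$, i.e. to $(s^{3}-r^{3})(s-r)\ge C(s-r)^{4}$ and $(s+r)(s^{3}+r^{3})\ge C(s+r)^{4}$, both of which hold (the tight case being $s=r$ in the second) with, say, $C=\tfrac14$.
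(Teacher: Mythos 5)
Your proof is correct and complete. Note that the paper itself does not prove Lemma~\ref{vector_norm_inequality_lem}; it only cites the literature, so you are supplying an argument the paper omits. Your route --- the integral representation $\psi(x)-\psi(y)=\int_0^1 D\psi(y+t(x-y))(x-y)\,\mathrm{d}t$ for $\psi(z)=|z|^2z$, with $D\psi(z)=|z|^2I+2zz^{\top}$ --- is the standard one for such monotonicity/continuity estimates of $p$-power nonlinearities, and all the computations check out: the eigenvalues $3|z|^2$ and $|z|^2$ give the operator-norm bound for \eqref{vector_norm_inequality_2}; the identity $\int_0^1|y+t(x-y)|^2\,\mathrm{d}t=\bigl|\tfrac{x+y}{2}\bigr|^2+\tfrac{1}{12}|x-y|^2$ is exact and yields \eqref{vector_norm_inequality_1} with $C=\tfrac{1}{12}$; and the purely algebraic finish (concavity in $u=x\cdot y$, reduction to the endpoints $u=\pm sr$) is also valid and gives the sharper constant $C=\tfrac14$, which is optimal as your example $x=-y$ shows. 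Your remark that the naive bound $x\cdot y\le|x||y|$ only captures the radial mismatch $(|x|^3-|y|^3)(|x|-|y|)$ and fails when $|x|=|y|$ is a genuinely useful warning, since that is the trap a reader attempting a one-line proof would fall into. Nothing further is needed; either of your two closings of the monotonicity step would serve as a self-contained proof in place of the paper's citation.
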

    Given arbitrary sequence $\{ z_{n} \}_{n=0}^{\infty}$,  we adopt the following notations for convenience
    \begin{align*}
        z_{n,\alpha} = \alpha_{2} z_{n+1} + \alpha_{0} z_{n} + \alpha_{0} z_{n-1}, \qquad
        z_{n,\beta} = \beta _{2}^{(n)} z_{n+1} + \beta _{1}^{(n)} z_{n} 
        + \beta _{0}^{(n)} z_{n-1}, 
    \end{align*}
    where $\{ \alpha_{\ell} \}_{\ell=0}^{2}$ and $\{ \beta_{\ell}^{(n)} \}_{\ell=0}^{2}$ are coefficents of the DLN method in \eqref{DLNcoeff}.

    Let $\mathscr{T}_h \  (0<h<1)$ be a regular partition of $\overline{D} = \cup_{K\in\mathscr{T}_h}\overline{K}$ with the mesh size $h$,
    $P_r(K)$  denotes the space of polynomials of degree less than or equal to $r$ on $K \in\mathscr{T}_h$ for any positive integer $r$. 
    The finite element spaces for $X$, $Q$ and $V$ are 
    \begin{align}
        X_{h} &= \Big\{ v^{h} \in [C^{0}(\bar{\Omega})]^d \cap X : v^{h} \mid_{K} \in [P_{r+1}(K)]^d, \ \forall K \in \mathscr{T}_{h} \Big\},   \label{zeq:FEM-space-Xh-1}\\
        Q_{h} &= \Big\{ q^{h} \in C^{0}(\bar{\Omega}) \cap L^2(D) : q^{h} \mid_{K} \in P_{r}(K), \ \forall K \in \mathscr{T}_{h} \Big\},   \label{zeq:FEM-space-Qh-1} \\
        V_{h} &= \Big\{ v^{h} \in X_{h} : \big( \nabla \cdot v^{h}, q^{h} \big) = 0, \ \forall q^{h} \in Q_{h} \Big\}.  \label{zeq:FEM-space-Vh-1}
    \end{align}
    We assume that the spaces $X_h \times Q_h$ in \eqref{zeq:FEM-space-Xh-1}-\eqref{zeq:FEM-space-Qh-1} satisfy the Ladyzhenskaya-Babuska-Brezzi condition ($LBB^h$ condition):
    for any $ q^h \in Q_h$, there exists $C > 0$ such that 
    \begin{equation}\label{LBB}
        \|q^h\| \leq C \sup_{v^h \in X_h \setminus \{0\}} \frac{(\nabla \cdot v^h, q^h)}{\|\nabla v^h\|}.
    \end{equation}
    Taylor--Hood element space, Mini element space, and {Scott--Vogelius element} space~\cite{MR813691} are all typical finite element spaces having $LBB^h$ condition in \eqref{LBB}. 
    We choose Taylor--Hood element space ($r=1$) for spatial discretization throughout our work.
    $X_h$ satisfies the inverse inequality
    \begin{align}
        \label{inverse-ineq}
        \| \nabla v^h \| \leq C h^{-1} \| v^h \|, \qquad \forall v^h \in X_h.
    \end{align}
    Here $C$ is a positive constant independent of the mesh diameter $h$.

    For any $(u, w, p) \in (X, V, Q)$, we define the Stokes-type projection 
    $\mathcal{S}_h (u, w, p) := (\mathcal{S}_h u, \mathcal{S}_h w, \mathcal{S}_h p) \in (X_h, V_h, Q_h)$ to be the unique pair such that for all $(v^h, \varphi^h, q^h) \in (X_h, V_h, Q_h)$
    \begin{equation}\label{Stokes-type-projection-defination}
        \begin{cases}
            \mu(\nabla(u - \mathcal{S}_{h} u), \nabla v^h) 
            + \gamma (\nabla(w - \mathcal{S}_{h} w), \nabla v^h) 
            - (p - \mathcal{S}_{h} p, \nabla \cdot v^h) = 0, 
            \\
            (w - \mathcal{S}_{h} w, \varphi^h) 
            = (\nabla(u - \mathcal{S}_{h} u), \nabla \varphi^h),
            \\
            (\nabla \cdot \mathcal{S}_h u, q^h) = 0.
        \end{cases}
    \end{equation}
    For the purpose of numerical implementation, we reformulate $\mathcal{S}_h$ in the following equivalent form:
    $\mathcal{S}_h (u, w, \phi, p) \in (X, X, Q, Q) := (\mathcal{S}_h u, \mathcal{S}_h w, \mathcal{S}_h \phi, \mathcal{S}_h p) \in (X_h, X_h, Q_h, Q_h)$ to be the unique pair such that 
    for all $(v^h, \varphi^h, \zeta^h, q^h) \in (X_h, X_h, Q_h, Q_h)$
    \begin{equation}\label{Stokes-type-projection-defination 2}
        \begin{cases}
            \mu(\nabla(u - \mathcal{S}_{h} u), \nabla v^h) 
            + \gamma (\nabla(w - \mathcal{S}_{h} w), \nabla v^h) 
            - (p - \mathcal{S}_{h} p, \nabla \cdot v^h) = 0, 
            \\
            (w - \mathcal{S}_{h} w, \varphi^h) - (\phi - \mathcal{S}_{h} \phi, \nabla \cdot \varphi^h)
            = (\nabla(u - \mathcal{S}_{h} u), \nabla \varphi^h),
            \\
            (\nabla \cdot \mathcal{S}_h u, q^h) = 0,
            \\
            (\nabla \cdot \mathcal{S}_h w, \zeta^h) = 0.
        \end{cases}
    \end{equation}
    The projection $\mathcal{S}_h$ in \eqref{Stokes-type-projection-defination} or \eqref{Stokes-type-projection-defination 2} satisfies the following approximation 
    \begin{equation}\label{Stokes-type projection}
        \begin{split}
            &\| u - \mathcal{S}_h u\| + \| w - \mathcal{S}_h w\|
            + h \big\{
            \|\nabla (u - \mathcal{S}_h u)\| + \|\nabla (w - \mathcal{S}_h w)\| + \|p -  \mathcal{S}_h p\| \big\} 
            \\
            &\leq C h ^{r+2} \big\{
            \| u \|_{r+2} + \| w \|_{r+2} + \| p \|_{r+1}
            \big\}.
        \end{split}
    \end{equation}
    The proof of \eqref{Stokes-type projection} is very similar to that of the approximation of Stokes projection, thus we refer to \cite{GR86_Springer,Joh16_Springer} for details.

    \section{Fully discrete schemes}
    \label{sec:sec3}
    The fully discrete DLN scheme for the non-linear, fourth-order active fluid model 
    in \eqref{ACEs} is 
    \begin{scheme} \label{fully discrete formulations scheme second}
		Given $u_n^h, u_{n-1}^h \in X_h$, $w_n^h, w_{n-1}^h, \in V_h$ and $p_n^h, p_{n-1}^h \in Q_h$, 
		we solve $(u_{n+1}^h, w_{n+1}^h,p_{n+1}^h) \in( X_h, V_h,Q_h)$ such that 
		\begin{align} \label{fully discrete formulations second 1}
			\begin{split}
				&\frac{1}{\widehat{k}_n}(u_{n,\alpha}^h, v^h)
				+ \mu(\nabla u_{n,\beta}^h, \nabla v^h) 
				+\gamma (\nabla w_{n,\beta}^h,\nabla v^h)
				+ \nu b( u_{n,\beta}^h,  u_{n,\beta}^h, v^h) 
				\\
				&\quad  
				+ \rho (u_{n,\beta}^h,v^h) 
				+ \lambda (|u^h_{n,\beta}|^2 u^h_{n,\beta} ,v^h)
				- (p_{n,\beta}^h, \nabla \cdot v^h)
				= (f_{n,\beta}, v^h), 
			\end{split}
			\\
			\begin{split}
				&(w_{n+1}^h,\varphi^h)= (\nabla u_{n+1}^h,\nabla \varphi^h),\label{fully discrete formulations second 2}
			\end{split}
			\\
			\begin{split}
				&(\nabla \cdot u_{n+1}^h, q^h) = 0, \label{fully discrete formulations second 3}
			\end{split}
		\end{align} 
        for all $(v^h, \varphi^h, q^h) \in ( X_h, V_h,Q_h)$ and $n = 1, 2, \cdots, M-1$.
	\end{scheme}

    In the beginning, we set $u_0^h = \mathcal{S}_h u_0$ and employ the fully-implicit Crank-Nicolson scheme to solve for $u_1^h$. 
    Accordingly, numerical solutions at two initial steps are second-order accurate in time.
    As the auxiliary variable $w$ is a divergence-free function, we introduce the following equivalent divergence-free DLN fully discrete scheme for practical use. 
    \begin{scheme}\label{fully discrete formulations scheme}
        Given $u_n^h, u_{n-1}^h, w_n^h, u_{n-1}^h, \in X_h$ and $p_n^h, p_{n-1}^h \in Q_h$, 
		we solve $(u_{n+1}^h, w_{n+1}^h,\phi_{n+1}^h, p_{n+1}^h) \in ( X_h, X_h, Q_h,Q_h)$ such that
        \begin{align} \label{fully discrete formulations 1}
            \begin{split}
                &\frac{1}{\widehat{k}_n}(u_{n,\alpha}^h, v^h)
                + \mu(\nabla u_{n,\beta}^h, \nabla v^h) 
                +\gamma (\nabla w_{n,\beta}^h,\nabla v^h)
                + \nu b( u_{n,\beta}^h,  u_{n,\beta}^h, v^h) 
                \\
                &\quad  
                + \rho (u_{n,\beta}^h,v^h) 
                + \lambda (|u^h_{n,\beta}|^2 u^h_{n,\beta} ,v^h)
                - (p_{n,\beta}^h, \nabla \cdot v^h)
                = (f_{n,\beta}, v^h), 
            \end{split}
            \\
            \begin{split}
                &(w_{n+1}^h,\varphi^h) -(\phi_{n+1}^h,\nabla \cdot \varphi^h)= (\nabla u_{n+1}^h,\nabla \varphi^h),\label{fully discrete formulations 2}
            \end{split}
            \\
            \begin{split}
                &(\nabla \cdot u_{n+1}^h, q^h) = 0, \label{fully discrete formulations 3}
            \end{split}
            \\
            \begin{split}
                &(\nabla \cdot w_{n+1}^h, \zeta^h) = 0. \label{fully discrete formulations 4}
            \end{split}
        \end{align} 
        for all $(v^h, \varphi^h, \zeta^h, q^h) \in ( X_h, X_h,Q_h,Q_h)$ and $n = 1, 2, \cdots, M-1$.
    \end{scheme}

    \subsection{Stability Analysis}
    In this subsection, we prove that Scheme \ref{fully discrete formulations scheme second} 
    or Scheme \ref{fully discrete formulations scheme} preserves the property of unconditional boundedness of kinetic energy due to $G$-stability of property of the variable time-stepping DLN method.
    \begin{definition}
		For $\theta \in [0 , 1]$ and  $u, v \in [L^2(D)]^2$, the $G$-norm $\| \cdot \|_{G(\theta)}$ is defined as
		\begin{equation} \label{G-norm}
			\left\Vert
			\begin{array}{cc}
				u \\
				v
			\end{array}
			\right\Vert^2_{G(\theta )}
			=
			\int_{D}
			\left[u^\top v^\top \right] G(\theta) 
			\left[
			\begin{array}{cc}
				u \\
				v
			\end{array}
			\right]
			\mathrm{d}D
			=\frac{1}{4}(1+\theta)\|u\|^2 +\frac{1}{4}(1-\theta)\|v\|^2,
		\end{equation}
		where the notation $\top$ represents the transpose of a vector and $G(\theta)$ is a symmetric semi-positive definite matrix with $\mathbb{I}_{2\times 2}$ identity matrix defined as
		\[
		G(\theta) = \begin{bmatrix}
			\frac{1}{4}(1+\theta)\mathbb{I}_{2\times 2} & 0 \\
			0 & \frac{1}{4}(1-\theta)\mathbb{I}_{2\times 2}
		\end{bmatrix}.
		\]
	\end{definition}
    \begin{lemma} \label{G-stable-lemma}
        For any sequence $\{ v_{n} \}_{n=0}^{M} \subset [L^2(D)]^2$, the following identity holds
		\begin{equation}\label{G-stable}
			\big( v_{n,\alpha}, v_{n,\beta} \big)  
			=
			\Big\lVert
			\begin{array}{cc}
				v_{n+1} \\
				v_n
			\end{array}
			\Big\lVert^2_{G(\theta )}
			-
			\Big\lVert
			\begin{array}{cc}
				v_{n} \\
				v_{n-1}
			\end{array}
			\Big\lVert^2_{G(\theta )}
			+
			\Big\| \sum_{\ell=0}^{2} a_{\ell}^{(n)} v_{n-1+{\ell}} \Big\|^2,
		\end{equation}
        holds for all $n = 1,2, \cdots, M-1$ and any fixed $\theta \in [0,1]$. Here $\{a_{\ell}^{(n)}\}_{\ell=0}^2$ 
        in \eqref{G-stable} are 
		\begin{equation*} 
			a_1^{(n)}  = -\frac{\sqrt{\theta (1-\theta^2)}}{\sqrt{2}(1+\varepsilon_n \theta)},  \quad
			a_2^{(n)} = -\frac{1-\varepsilon_n}{2} a_1^{(n)} ,   \quad
			a_0^{(n)} = -\frac{1+\varepsilon_n}{2} a_1^{(n)}.   
		\end{equation*}
	\end{lemma}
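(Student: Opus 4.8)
The plan is to strip the claimed operator identity down to an elementary identity among quadratic forms. Every term in \eqref{G-stable} is a finite $\mathbb{R}$-linear combination of the inner products $(v_{n-1+i}, v_{n-1+j})$, $0\le i\le j\le 2$, with coefficients depending only on $\theta$ and $\varepsilon_n$; since $(\cdot,\cdot)$ is symmetric and bilinear, it suffices to prove the identity with each $v_{n-1+\ell}$ replaced by a real number $x_\ell$. Expanding the difference of $G(\theta)$-norms via \eqref{G-norm}, what must be shown is the equality of the two quadratic forms
\[
\Big(\sum_{\ell=0}^2 \alpha_\ell x_\ell\Big)\Big(\sum_{m=0}^2 \beta_m^{(n)} x_m\Big)
= \tfrac14(1+\theta)\,x_2^2 - \tfrac{\theta}{2}\,x_1^2 - \tfrac14(1-\theta)\,x_0^2 + \Big(\sum_{\ell=0}^2 a_\ell^{(n)} x_\ell\Big)^2,
\]
which is equivalent to matching the six coefficients of $x_2^2,\,x_1^2,\,x_0^2,\,x_1 x_2,\,x_0 x_2,\,x_0 x_1$.

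It is convenient to set $P_n := (1-\theta^2)/(1+\varepsilon_n\theta)^2$, so that \eqref{DLNcoeff} reads $\beta_2^{(n)} = \tfrac14(1+\theta+P_n+\varepsilon_n^2\theta P_n)$, $\beta_1^{(n)} = \tfrac12(1-P_n)$, $\beta_0^{(n)} = \tfrac14(1-\theta+P_n-\varepsilon_n^2\theta P_n)$, while $(a_1^{(n)})^2 = \tfrac12\theta P_n$ and $a_2^{(n)} = -\tfrac{1-\varepsilon_n}{2}a_1^{(n)}$, $a_0^{(n)} = -\tfrac{1+\varepsilon_n}{2}a_1^{(n)}$; in particular every product $a_i^{(n)}a_j^{(n)}$ is a rational multiple of $\theta P_n$, so the square roots never reappear. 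With $\alpha_2 = \tfrac12(1+\theta)$, $\alpha_1 = -\theta$, $\alpha_0 = \tfrac12(\theta-1)$ from \eqref{DLNcoeff}, the six identities to verify are the three diagonal ones $\alpha_2\beta_2^{(n)} = \tfrac14(1+\theta)+(a_2^{(n)})^2$, $\alpha_1\beta_1^{(n)} = -\tfrac{\theta}{2}+(a_1^{(n)})^2$, $\alpha_0\beta_0^{(n)} = -\tfrac14(1-\theta)+(a_0^{(n)})^2$, and the three off-diagonal ones $\alpha_1\beta_2^{(n)}+\alpha_2\beta_1^{(n)} = 2a_1^{(n)}a_2^{(n)}$, $\alpha_0\beta_2^{(n)}+\alpha_2\beta_0^{(n)} = 2a_0^{(n)}a_2^{(n)}$, $\alpha_0\beta_1^{(n)}+\alpha_1\beta_0^{(n)} = 2a_0^{(n)}a_1^{(n)}$. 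After substituting the closed forms and clearing the common factor $(1+\varepsilon_n\theta)^{2}$, each collapses to a polynomial identity in $\theta$ and $\varepsilon_n$ that expands out directly; for instance $\alpha_1\beta_1^{(n)} = -\tfrac{\theta}{2}(1-P_n) = -\tfrac{\theta}{2}+(a_1^{(n)})^2$, and $\alpha_0\beta_2^{(n)}+\alpha_2\beta_0^{(n)} = \tfrac14\theta P_n(1-\varepsilon_n^2) = 2a_0^{(n)}a_2^{(n)}$.

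Conceptually, the reason the right-hand side telescopes is that the quadratic form $(v_{n,\alpha},v_{n,\beta}) - \big\|\sum_{\ell} a_\ell^{(n)} v_{n-1+\ell}\big\|^2$ turns out to have no $v_{n-1}v_{n+1}$ cross term and no $v_n v_{n\pm 1}$ cross terms (this is precisely the content of the three off-diagonal identities), hence depends on $(v_{n+1},v_n,v_{n-1})$ only through $\|v_{n+1}\|^2$, $\|v_n\|^2$, $\|v_{n-1}\|^2$; the remaining relation $-\tfrac{\theta}{2} = -\tfrac14(1+\theta)+\tfrac14(1-\theta)$ between the three diagonal coefficients is exactly what lets it be written as $F(v_{n+1},v_n) - F(v_n,v_{n-1})$ with $F = \|\cdot\|_{G(\theta)}^2$. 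This is the discrete shadow of Dahlquist's $G$-stability theory for one-leg methods; one could alternatively \emph{derive} the $a_\ell^{(n)}$ by completing the square in this form rather than checking them a posteriori.

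The argument uses no analysis, only algebra, and the only real hazard is bookkeeping: the $\beta_\ell^{(n)}$ carry the rational weight $(1+\varepsilon_n\theta)^{-2}$ and the $a_\ell^{(n)}$ carry the irrational factor $\sqrt{\theta(1-\theta^2)}$, so a naive expansion is error-prone. This is tamed by the two devices above — the substitution $P_n = (1-\theta^2)/(1+\varepsilon_n\theta)^2$, which renders every relevant quantity polynomial in $\theta,\varepsilon_n$ times $1$ or $P_n$, and the use of the ratios $a_2^{(n)}/a_1^{(n)}$, $a_0^{(n)}/a_1^{(n)}$ together with $(a_1^{(n)})^2 = \tfrac12\theta P_n$, so that no square root is ever squared by hand. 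A useful sanity check along the way is $\sum_{\ell=0}^2 a_\ell^{(n)} = 0$ (immediate from the ratios), which is also consistent with the degenerate endpoints $\theta = 0,1$: there all $a_\ell^{(n)}$ vanish and \eqref{G-stable} collapses to the standard midpoint-rule energy identity on $[t_{n-1},t_{n+1}]$, respectively $[t_n,t_{n+1}]$ — the very intervals noted after \eqref{DLNcoeff}.
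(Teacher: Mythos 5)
Your proof is correct and is exactly the route the paper takes: the paper's proof of Lemma~\ref{G-stable-lemma} consists of the single remark that the identity ``is just an algebraic calculation,'' and your coefficient-matching of the six quadratic-form identities (which I have checked, e.g.\ $\alpha_0\beta_2^{(n)}+\alpha_2\beta_0^{(n)}=\tfrac14\theta P_n(1-\varepsilon_n^2)=2a_0^{(n)}a_2^{(n)}$) carries out precisely that calculation, with the polarization step reducing the $[L^2(D)]^2$-valued statement to the scalar one. In effect you have supplied the details the paper omits; no gap.
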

    \begin{proof}
        The proof of $G$-stability identity in \eqref{G-stable} is just a algebraic calculation. 
    \end{proof}
    \ \\
    \begin{theorem}[Boundedness in kinetic energy] \label{Stability of DLN method}
		Assume that $ u_0^h, u_1^h \in X_h $, $f \in L^2([0,T],[L^2(D)]^2)$ and time steps satisfies
		\begin{align} \label{time-cond-stab}
			C_{\beta}^{(n)} |\rho| \widehat{k}_{n} \leq \frac{1+\theta}{4}, \quad \forall \ n = 1, \cdots M-1, 
		\end{align}
		where $\displaystyle C_{\beta}^{(n)} = \sum_{\ell = 0}^{2} \big( \beta_{\ell}^{(n)} \big)^{2}$.
		Scheme \ref{fully discrete formulations scheme second} is bounded in kinetic energy, i.e. for $ 1 <m \leq M $, 
		\begin{align} 
				&\frac{1+\theta}{4}\| u^h_{m} \|^2 +
				\sum_{n=1}^{m-1} \Big[ \Big\| \sum_{\ell=0}^{2} a_{\ell}^{(n)} u^h_{n-1+ \ell} \Big\|^2 
				+ \widehat{k}_n \Big( \frac{\mu}{2} \| \nabla u^h_{n,\beta}\|^2
				+ \gamma \| w_{n,\beta}^h \|^2 + \lambda \| u^h_{n,\beta} \|_{L^4}^4 \Big) \Big]
				\notag \\
				&\leq \exp \big( C(\theta) T \big) \Big(  \frac{1}{4}(1+\theta) \| u^h_{1} \|^2
				+ \frac{1}{4}(1+\theta) \| u^h_{0} \|^2 
				+ \frac{C}{\mu} \sum_{n=0}^{m-1}  \widehat{k}_{n} \| f_{n,\beta} \|^2 \Big). 
				\label{Stability of DLN inequality}
		\end{align}
	\end{theorem}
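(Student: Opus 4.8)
The plan is to run the standard discrete energy argument for the variable-step DLN method. I would test the momentum equation~\eqref{fully discrete formulations second 1} with $v^h=u_{n,\beta}^h$. Because every $u_k^h$ satisfies the discrete incompressibility constraint~\eqref{fully discrete formulations second 3}, so does the combination $u_{n,\beta}^h$; hence $u_{n,\beta}^h\in V_h$, the pressure term $(p_{n,\beta}^h,\nabla\cdot u_{n,\beta}^h)$ vanishes, the skew-symmetry of $b(\cdot,\cdot,\cdot)$ kills $\nu\,b(u_{n,\beta}^h,u_{n,\beta}^h,u_{n,\beta}^h)$, and the quartic term becomes $\lambda\|u_{n,\beta}^h\|_{L^4}^4\ge 0$. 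To handle the $\gamma$-coupling I would take the $\beta$-weighted combination of~\eqref{fully discrete formulations second 2} over the levels $n-1,n,n+1$ and test it with $w_{n,\beta}^h\in V_h$ (admissible since each $w_k^h\in V_h$), which yields $(\nabla u_{n,\beta}^h,\nabla w_{n,\beta}^h)=\|w_{n,\beta}^h\|^2$, so that term contributes $\gamma\|w_{n,\beta}^h\|^2\ge 0$.

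Next I would multiply through by $\widehat{k}_n$ and apply the $G$-stability identity of Lemma~\ref{G-stable-lemma} to the discrete time derivative, turning $(u_{n,\alpha}^h,u_{n,\beta}^h)$ into the telescoping difference of the $G(\theta)$-norms of $(u_{n+1}^h,u_n^h)^\top$ and $(u_n^h,u_{n-1}^h)^\top$ plus the nonnegative numerical dissipation $\|\sum_{\ell=0}^{2}a_\ell^{(n)}u_{n-1+\ell}^h\|^2$. The forcing is bounded by Cauchy--Schwarz, the Poincar\'e inequality and Young's inequality, $\widehat{k}_n(f_{n,\beta},u_{n,\beta}^h)\le\tfrac{\mu}{2}\widehat{k}_n\|\nabla u_{n,\beta}^h\|^2+\tfrac{C}{\mu}\widehat{k}_n\|f_{n,\beta}\|^2$, absorbing half of the viscous term. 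If $\rho\ge0$, the reaction term $\rho\widehat{k}_n\|u_{n,\beta}^h\|^2$ is already nonnegative; summing over $n$, telescoping, using $\tfrac{1+\theta}{4}\|u_m^h\|^2\le\|(u_m^h,u_{m-1}^h)^\top\|_{G(\theta)}^2$, and appending the analogous Crank--Nicolson estimate for the starting step (which furnishes the $n=0$ forcing contribution) gives~\eqref{Stability of DLN inequality} with $C(\theta)=0$.

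The only genuinely delicate case is $\rho<0$, where $-\rho\widehat{k}_n\|u_{n,\beta}^h\|^2=|\rho|\widehat{k}_n\|u_{n,\beta}^h\|^2$ is a positive term that must be moved to the right-hand side. Here I would split $u_{n,\beta}^h=\beta_2^{(n)}u_{n+1}^h+(\beta_1^{(n)}u_n^h+\beta_0^{(n)}u_{n-1}^h)$ and use Young's and the Cauchy--Schwarz inequality so that the coefficient of $\|u_{n+1}^h\|^2$ in the resulting bound is $(1+\delta)(\beta_2^{(n)})^2$, which for $\delta$ small is \emph{strictly} below $C_\beta^{(n)}=(\beta_2^{(n)})^2+(\beta_1^{(n)})^2+(\beta_0^{(n)})^2$ because $\beta_1^{(n)}$ and $\beta_0^{(n)}$ cannot vanish simultaneously. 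Using $\|u_j^h\|^2\le\tfrac{4}{1+\theta}\|(u_j^h,u_{j-1}^h)^\top\|_{G(\theta)}^2$ and the time-step restriction~\eqref{time-cond-stab}, the quantity $|\rho|\widehat{k}_n\|u_{n,\beta}^h\|^2$ is majorized by $\widehat{k}_n$ times a fixed nonnegative combination of the $G(\theta)$-norms at levels $n$, $n-1$, $n-2$, with two essential features: every coefficient is proportional to $\widehat{k}_n$, and the coefficient of the current-level norm is at most $(1+\delta)(\beta_2^{(n)})^2/C_\beta^{(n)}<1$ uniformly in $n$. Summing over $n=1,\dots,m-1$, telescoping, and invoking the discrete Gronwall inequality then produces the factor $\exp(C(\theta)T)$, and the same reductions as in the $\rho\ge0$ case yield~\eqref{Stability of DLN inequality}; the equivalence between Scheme~\ref{fully discrete formulations scheme second} and Scheme~\ref{fully discrete formulations scheme} transfers the estimate to the latter.

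I expect the main obstacle to be exactly this treatment of the sign-indefinite reaction term: one has to organize the estimate so that the current-step $G(\theta)$-energy is not consumed --- which forces the $\|u_{n+1}^h\|^2$-coefficient strictly below $C_\beta^{(n)}$ and uses a nontrivial positivity property of the DLN weights --- while simultaneously keeping all Gronwall coefficients $O(\widehat{k}_n)$, so that the growth is $\exp(C(\theta)T)$ rather than step-count dependent. The remaining points (membership $u_{n,\beta}^h,w_{n,\beta}^h\in V_h$, retention of the dissipation terms $\|\sum_{\ell=0}^{2}a_\ell^{(n)}u_{n-1+\ell}^h\|^2$, $\tfrac{\mu}{2}\widehat{k}_n\|\nabla u_{n,\beta}^h\|^2$, $\gamma\widehat{k}_n\|w_{n,\beta}^h\|^2$, $\lambda\widehat{k}_n\|u_{n,\beta}^h\|_{L^4}^4$ through the summation, and the bookkeeping of shifted $G(\theta)$-norms) are routine.
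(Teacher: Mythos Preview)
Your proposal is correct and follows the same energy argument as the paper: test \eqref{fully discrete formulations second 1}--\eqref{fully discrete formulations second 3} with $(u_{n,\beta}^h,w_{n,\beta}^h,p_{n,\beta}^h)$, invoke skew-symmetry of $b$, the $G$-stability identity of Lemma~\ref{G-stable-lemma}, and Poincar\'e--Young on the forcing, then sum and apply discrete Gr\"onwall. The only difference is your handling of the reaction term when $\rho<0$: the paper does not introduce the auxiliary parameter $\delta$ or argue that $\beta_1^{(n)},\beta_0^{(n)}$ cannot vanish simultaneously; it simply uses the crude Cauchy--Schwarz bound $\|u_{n,\beta}^h\|^2\le C_\beta^{(n)}\big(\|u_{n+1}^h\|^2+\|u_n^h\|^2+\|u_{n-1}^h\|^2\big)$, peels off only the top step $n=m-1$, absorbs $C_\beta^{(m-1)}|\rho|\widehat{k}_{m-1}\|u_m^h\|^2$ into the left-hand coefficient $\tfrac{1+\theta}{4}\|u_m^h\|^2$ via \eqref{time-cond-stab}, and feeds the remaining sum $\sum_{n=1}^{m-2}|\rho|\widehat{k}_n\|u_{n,\beta}^h\|^2$ (which involves only $u_j^h$ with $j\le m-1$) directly into the discrete Gr\"onwall lemma. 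Your more careful splitting buys a clean strict inequality at the top level even when \eqref{time-cond-stab} holds with equality, at the cost of some extra bookkeeping; the paper's version is shorter but tacitly needs a little slack in \eqref{time-cond-stab}.
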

    \begin{proof}
        We set $v^h = u^h_{n,\beta}$ in \eqref{fully discrete formulations second 1}, 
        $\varphi^h = w^h_{n,\beta}$ in \eqref{fully discrete formulations second 2}, 
        $q^h = p^h_{n,\beta}$ in \eqref{fully discrete formulations second 3}, 
        and add three equations together to derive 
		\begin{align}
			&\frac{1}{\widehat{k}_n} (u_{n,\alpha}^h, u^h_{n,\beta}) 
			+ \mu \| \nabla u^h_{n,\beta} \|^2 + \gamma \| w_{n,\beta}^h \|^2
			+\rho \| u^h_{n,\beta} \|^2 + \lambda \| u^h_{n,\beta} \|_{L^4}^4 
            + \nu b(u_{n,\beta}^h, u_{n,\beta}^h, u^h_{n,\beta}) \notag \\
			&= (f_{n,\beta}, u^h_{n,\beta}).
            \label{Stab-eq1}
		\end{align}
        By the skew-symmetric property of $b(\cdot,\cdot,\cdot)$, Cauchy--Schwarz inequality and Poincar\'{e}'s inequality, \eqref{Stab-eq1} becomes
        \begin{align*}
			\frac{1}{\widehat{k}_n} (u_{n,\alpha}^h, u^h_{n,\beta}) 
			+ \mu \| \nabla u^h_{n,\beta} \|^2 + \gamma \| w_{n,\beta}^h \|^2
			+\rho \| u^h_{n,\beta} \|^2 + \lambda \| u^h_{n,\beta} \|_{L^4}^4
			\leq C \|f_{n,\beta}\| \|\nabla u^h_{n,\beta}\|.
		\end{align*}
        We apply $G$-stability identity in \eqref{G-stable} and Young's inequality to the above inequality 
        \begin{align} \label{Stab-eq2}
			\begin{split}
				&\bigg\| 
				\begin{array}{cc}
					u^h_{n+1} \\
					u^h_{n} \\
				\end{array} \bigg\|^2_{G(\theta )}
				- \bigg\|
				\begin{array}{cc}
					u^h_{n} \\
					u^h_{n-1} \\
				\end{array} \bigg\|^2_{G(\theta )}
				+ \Big\| \sum_{\ell=0}^{2} a_{\ell}^{(n)} u^h_{n-1+ \ell} \Big\|^2
				+ \frac{\mu}{2} \widehat{k}_n \| \nabla u^h_{n,\beta} \|^2 
				\\
				&+\rho \widehat{k}_n \| u^h_{n,\beta} \|^2
				+ \gamma \widehat{k}_n \| w_{n,\beta}^h \|^2
				+ \lambda \widehat{k}_n \| u^h_{n,\beta} \|_{L^4}^4
				\leq \frac{C}{\mu} \widehat{k}_n \| f_{n,\beta} \|^2.
			\end{split}
		\end{align}
        We employ the definition of $G$-norm in \eqref{G-norm} to \eqref{Stab-eq2} and 
        sum \eqref{Stab-eq2} over $n$ from $n=1$ to $m-1$ 
        \begin{align*}
        	&\frac{1+\theta}{4}\| u^h_{m} \|^2 +
        	\sum_{n=1}^{m-1} \Big[ \Big\| \sum_{\ell=0}^{2} a_{\ell}^{(n)} u^h_{n-1+ \ell} \Big\|^2 
        	 + \widehat{k}_n \Big( \frac{\mu}{2} \| \nabla u^h_{n,\beta}\|^2
        	+ \gamma \| w_{n,\beta}^h \|^2 + \lambda \| u^h_{n,\beta} \|_{L^4}^4 \Big) \Big]
        	\notag \\
        	&\leq C_{\beta}^{(m-1)} |\rho| \widehat{k}_{m-1} \big(  \| u^h_{m} \|^{2} + \| u^h_{m-1} \|^{2} + \| u^h_{m-2} \|^{2}  \big) + \sum_{n=1}^{m-2} \widehat{k}_n |\rho| \| u^h_{n,\beta} \|^2 
     		\\
        	&+ \frac{1}{4}(1+\theta) \| u^h_{1} \|^2
        	+ \frac{1}{4}(1+\theta) \| u^h_{0} \|^2 
        	+ \frac{C}{\mu} \sum_{n=0}^{m-1}  \widehat{k}_{n} \| f_{n,\beta} \|^2,  \notag 
        \end{align*}
        which implies \eqref{Stability of DLN inequality} by  discrete Gr\"onwall inequality \cite[p.369]{MR1043610}. 
    \end{proof}

    \section{Error analysis}
    \label{sec:sec4}
    Throughtout this section, we denote $u_{n}$, $w_{n}$, $p_{n}$ to be the true solutions of velocity $u$, auxiliary variable $w$, pressure $p$ at time $t_{n}$ respectively. 
    We decompose the error functions for velocity $e^u_n$, auxiliary variable $e^w_n$, and pressure $e^p_n$ at time $t_{n}$ as:
    \begin{align*}
		&e^u_n =u_n^h - u_n  = (u_n^h - \mathcal{S}_h u_n) - (u_n - \mathcal{S}_h u_n) :=\psi^u_n - \eta^u_n,
		\\
		&e^w_n =w_n^h - w_n = (w_n^h - \mathcal{S}_h w_n) - (w_n - \mathcal{S}_h w_n) := \psi^w_n - \eta^w_n ,
		\\
		&e^p_n =p_n^h - p_n=  (p_n^h - \mathcal{S}_h p_n) - (p_n - \mathcal{S}_h p_n) := \psi^p_n -\eta^p_n,
	\end{align*} 
	where  $\mathcal{S}_h$ represents the Stokes-type projection \eqref{Stokes-type-projection-defination}.
    We need consistency properties of the DLN method, as well as assumptions about the regularity of true solutions and the time step constraint.
    \begin{lemma}[consistency] \label{n beta int tn-1 tn+1}
		Suppose that $u(\cdot, t)$ is the mapping from $[0,T]$ to $H^{r}(D)$. Assuming that the mapping $u(\cdot, t)$ is third-order differentiable in time, then for any $\theta \in [0,1)$
		\begin{align*}
			&\big\| u_{n,\beta} - u(t_{n,\beta}) \big\|_{r}^2
			\leq
			C(\theta)(k_n+k_{n-1})^3 \int_{t_{n-1}}^{t_{n+1}}\|u_{tt}\|_{r}^2 \mathrm{d}t, \\
            &\Big\| \frac{u_{n,\alpha}}{\widehat{k}_n} - u_t(t_{n,\beta}) \Big\|_{r}^2
			\leq
			C(\theta) (k_n+k_{n-1})^3 \int_{t_{n-1}}^{t_{n+1}}\|u_{ttt}\|_{r}^2 \mathrm{d}t.
		\end{align*}
		For $\theta = 1$, the DLN method reduces to the midpoint rule. We have 
		\begin{align*}
			&\big\| u_{n,\beta} - u(t_{n,\beta}) \big\|_{r}^2
            = \Big\| \frac{u_{n+1} + u_{n}}{2} - u \Big(\frac{t_{n+1} + t_{n}}{2} \Big) \Big\|_{r}
			\leq
			C k_n^3 \int_{t_{n-1}}^{t_{n+1}}\|u_{tt}\|_{r}^2 \mathrm{d}t, \\
            &\Big\| \frac{u_{n,\alpha}}{\widehat{k}_n} - u_t(t_{n,\beta}) \Big\|_{r}^2
            = \Big\| \frac{u_{n+1} - u_{n}}{k_n} - u_{t} \Big(\frac{t_{n+1} + t_{n}}{2} \Big) \Big\|_{r}
			\leq C k_n^3 \int_{t_{n-1}}^{t_{n+1}}\|u_{ttt}\|_{r}^2 \mathrm{d}t.
		\end{align*}
	\end{lemma}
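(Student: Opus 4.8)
The plan is to prove both bounds by Taylor expansion with integral remainder centered at the weighted node $t_{n,\beta}=\sum_{\ell=0}^{2}\beta_{\ell}^{(n)}t_{n-1+\ell}$, using the algebraic identities satisfied by the DLN coefficients in \eqref{DLNcoeff}. Writing $\tau_{\ell}:=t_{n-1+\ell}-t_{n,\beta}$, the preliminary facts I would record are: $|\tau_{\ell}|\le k_n+k_{n-1}$; $|\alpha_{\ell}|\le 1$; $|\beta_{\ell}^{(n)}|\le C(\theta)$ uniformly in $\varepsilon_n\in(-1,1)$, since $(1+\varepsilon_n\theta)^2\ge(1-\theta)^2>0$ for $\theta<1$ while the $\theta$-dependent numerators in \eqref{DLNcoeff} vanish as $\theta\to1$; and the lower bound $\widehat k_n=\tfrac{1+\theta}{2}k_n+\tfrac{1-\theta}{2}k_{n-1}\ge\tfrac{1-\theta}{2}(k_n+k_{n-1})$ for $\theta\in[0,1)$. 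In addition I would use the moment conditions $\sum_{\ell}\beta_{\ell}^{(n)}=1$, $\sum_{\ell}\beta_{\ell}^{(n)}\tau_{\ell}=0$ (immediate from the definition of $t_{n,\beta}$), $\sum_{\ell}\alpha_{\ell}=0$, $\sum_{\ell}\alpha_{\ell}\tau_{\ell}=\widehat k_n$, and $\sum_{\ell}\alpha_{\ell}\tau_{\ell}^{2}=0$; these last ones are a direct calculation with \eqref{DLNcoeff} and are equivalent to exactness of the one-leg DLN method on polynomials of degree $\le 2$, i.e. its second-order accuracy.

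For the first estimate I would expand $u(t_{n-1+\ell})=u(t_{n,\beta})+u_t(t_{n,\beta})\tau_{\ell}+\int_{t_{n,\beta}}^{t_{n-1+\ell}}(t_{n-1+\ell}-s)\,u_{tt}(s)\,\mathrm{d}s$, multiply by $\beta_{\ell}^{(n)}$ and sum over $\ell$; the first two terms cancel by $\sum_{\ell}\beta_{\ell}^{(n)}=1$ and $\sum_{\ell}\beta_{\ell}^{(n)}\tau_{\ell}=0$, leaving $u_{n,\beta}-u(t_{n,\beta})=\sum_{\ell}\beta_{\ell}^{(n)}\int_{t_{n,\beta}}^{t_{n-1+\ell}}(t_{n-1+\ell}-s)u_{tt}(s)\,\mathrm{d}s$. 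Taking the $H^r$-norm, applying the triangle inequality for the Bochner integral, the bound $|\beta_{\ell}^{(n)}|\le C(\theta)$, and Cauchy--Schwarz in time (which turns $\int_{t_{n,\beta}}^{t_{n-1+\ell}}|t_{n-1+\ell}-s|^{2}\,\mathrm{d}s\le\tfrac13(k_n+k_{n-1})^{3}$ into a factor $C(k_n+k_{n-1})^{3/2}$ and leaves $(\int_{t_{n-1}}^{t_{n+1}}\|u_{tt}\|_{r}^{2}\,\mathrm{d}t)^{1/2}$), then squaring, yields exactly the claimed bound. For the second estimate I would expand one order further, $u(t_{n-1+\ell})=u(t_{n,\beta})+u_t(t_{n,\beta})\tau_{\ell}+\tfrac12 u_{tt}(t_{n,\beta})\tau_{\ell}^{2}+\tfrac12\int_{t_{n,\beta}}^{t_{n-1+\ell}}(t_{n-1+\ell}-s)^{2}u_{ttt}(s)\,\mathrm{d}s$, multiply by $\alpha_{\ell}$ and sum; by $\sum_{\ell}\alpha_{\ell}=0$, $\sum_{\ell}\alpha_{\ell}\tau_{\ell}=\widehat k_n$, $\sum_{\ell}\alpha_{\ell}\tau_{\ell}^{2}=0$ the first three terms collapse to $\widehat k_n\,u_t(t_{n,\beta})$, so $\widehat k_n^{-1}u_{n,\alpha}-u_t(t_{n,\beta})=(2\widehat k_n)^{-1}\sum_{\ell}\alpha_{\ell}\int_{t_{n,\beta}}^{t_{n-1+\ell}}(t_{n-1+\ell}-s)^{2}u_{ttt}(s)\,\mathrm{d}s$. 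Now $|\alpha_{\ell}|\le1$ and Cauchy--Schwarz (with $\int|t_{n-1+\ell}-s|^{4}\,\mathrm{d}s\le\tfrac15(k_n+k_{n-1})^{5}$) give a factor $C(k_n+k_{n-1})^{5/2}(\int_{t_{n-1}}^{t_{n+1}}\|u_{ttt}\|_{r}^{2}\,\mathrm{d}t)^{1/2}$; dividing by $\widehat k_n\ge\tfrac{1-\theta}{2}(k_n+k_{n-1})$ absorbs one power of $k_n+k_{n-1}$, and squaring produces the stated inequality. The $\theta=1$ case is the classical midpoint-rule consistency: then $t_{n,\beta}=\tfrac{t_{n+1}+t_n}{2}$, $\widehat k_n=k_n$, $u_{n,\beta}=\tfrac{u_{n+1}+u_n}{2}$ and $u_{n,\alpha}=u_{n+1}-u_n$, so the same two Taylor arguments restricted to $[t_n,t_{n+1}]$ give the bounds with $k_n^{3}$, which I may then majorize by $\int_{t_{n-1}}^{t_{n+1}}$ since the integrands are nonnegative.

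The part that needs the most care — rather than a genuine obstacle — is verifying that the DLN weights obey the degree-$\le 2$ moment conditions and that $|\beta_{\ell}^{(n)}|$ and $\widehat k_n^{-1}(k_n+k_{n-1})$ are controlled by a constant $C(\theta)$ that is finite precisely for $\theta<1$; once these are established the estimates are routine Taylor-remainder and Cauchy--Schwarz manipulations. This degeneration of $C(\theta)$ as $\theta\to1$ is exactly why the midpoint limit is split off and handled by the direct computation above.
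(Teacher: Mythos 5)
Your proposal is correct. The paper does not prove this lemma internally --- it only cites \cite[Appendix B]{CLPX2025_JSC} --- and your argument is essentially the standard one from that reference: Taylor expansion with integral remainder about $t_{n,\beta}$, combined with the moment identities $\sum_{\ell}\beta_{\ell}^{(n)}=1$, $\sum_{\ell}\beta_{\ell}^{(n)}\tau_{\ell}=0$, $\sum_{\ell}\alpha_{\ell}=0$, $\sum_{\ell}\alpha_{\ell}\tau_{\ell}=\widehat{k}_{n}$, $\sum_{\ell}\alpha_{\ell}\tau_{\ell}^{2}=0$ (the last being exactly second-order exactness of the one-leg DLN method on arbitrary grids), followed by Cauchy--Schwarz in time and the lower bound $\widehat{k}_{n}\ge\tfrac{1-\theta}{2}(k_{n}+k_{n-1})$. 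The only assertion you state without justification that genuinely requires a check is $t_{n,\beta}\in[t_{n-1},t_{n+1}]$ (needed both for $|\tau_{\ell}|\le k_{n}+k_{n-1}$ and so that the remainder integrals stay inside the window $[t_{n-1},t_{n+1}]$); it does hold, since $t_{n,\beta}-t_{n-1}\ge 0$ and $t_{n+1}-t_{n,\beta}\ge 0$ reduce, after clearing the positive factor $1+\varepsilon_{n}\theta$, to $2(1-\theta)+\theta(1\mp\varepsilon_{n})^{2}\ge 0$, which is true for all $\theta\in[0,1]$ and $\varepsilon_{n}\in(-1,1)$.
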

	\begin{proof}
		See \cite[Appendix B.]{CLPX2025_JSC} for the complete proof.
	\end{proof}

	\subsection{Error estimate for velocity in $L^{2}$-norm} 
	\ 
    \begin{assumption}\label{u_p_T_space}
		We assume that the exact solution $(u,w,p)$ in \eqref{ACEs} and the external body force $f$ satisfy the following regularity assumptions:
		\begin{align*}
			&u \in W^{3,2}\big([0,T]; [H^{r+2}(D)]^2 \big) \cap L^{\infty}\big([0,T]; [H^{r+2}(D)]^2 \big),
			\\
			&w \in W^{2,2}\big([0,T]; [H^{r+2}(D)]^2 \big) \cap L^{\infty}\big([0,T]; [H^{r+2}(D)]^2 \big),
			\\
			&p \in W^{2,2}\big([0,T]; H^{r+1}(D) \big) \cap L^\infty\big([0,T]; H^{r+1}(D)\big), \quad 
			f \in W^{2,2}\big([0,T]; [L^{2}(D)]^2 \big).
		\end{align*}
	\end{assumption}
    \begin{assumption}[Time step constraint] \label{time_condition_2}
        We assume that the weighted average time step $\widehat{k}_{n}$ satisfies: 
        for all $1 \leq n \leq M-1$ 
		\begin{align}
			\label{time_condition_eq2}
			C_{\beta}^{(n)} \widehat{k}_{n} \Big( |\rho| + \frac{ C^{\ast} \nu^4 }{\mu^{3}} \| \nabla u_{n,\beta} \|^{4} \Big)
			< \frac{1+\theta}{4},
		\end{align}
		where $C_{\beta}^{(n)}$ is defined in \eqref{time-cond-stab} and $C^{\ast} > 0$ denotes the constant $C$ in \eqref{b 0 1 1 1}.
	\end{assumption}

    \begin{theorem}\label{Intermediate conclusion}
		Suppose that Assumptions \ref{u_p_T_space} and \ref{time_condition_2} hold. For sufficiently small $k_n>0$, there exists a constant \( C \), independent of the mesh size \( h \) and the time step \( k_n \), such that the following error estimate holds:
		\begin{align} \label{proof of max u-u^h T-T^h error}
			\begin{split}
				&\max_{0 \leq n \leq M}  \| e_n^u \|^2
				+  \sum_{n=1}^{M-1} \widehat{k}_n \|\nabla e_{n,\beta}^u \|^2 
				+  \sum_{n=1}^{M-1} \widehat{k}_n \| e_{n,\beta}^{w} \|^2 
				\leq \mathcal{O}(h^{2r+2} + k_{\max}^4),
			\end{split}
		\end{align}
        where $\displaystyle k_{\max} = \max_{0 \leq n \leq M-1} \{ k_{n} \}$.
	\end{theorem}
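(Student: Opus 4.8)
The plan is to derive an error equation for the "discrete" part of the velocity error, $\psi^u_n$, by subtracting the variational formulation \eqref{variational_formula_second_1}--\eqref{variational_formula_second_4} evaluated at the DLN interpolation point $t_{n,\beta}$ from the scheme \eqref{fully discrete formulations 1}--\eqref{fully discrete formulations 4}, and then to mimic the stability argument of Theorem \ref{Stability of DLN method} with the forcing replaced by consistency and projection residuals. First I would write the true solution's residual at $t_{n,\beta}$ using Lemma \ref{n beta int tn-1 tn+1}: the terms $u_{n,\alpha}/\widehat{k}_n - u_t(t_{n,\beta})$ and $u_{n,\beta} - u(t_{n,\beta})$ (and similarly for $w$, $p$) contribute $\mathcal{O}(k_{\max}^4)$ after multiplication by $\widehat k_n$ and summation, by Assumption \ref{u_p_T_space}. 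The Stokes-type projection \eqref{Stokes-type-projection-defination 2} is designed precisely so that the $\mu$, $\gamma$, and pressure-coupling terms involving $\eta^u = u - \mathcal{S}_h u$, $\eta^w$, $\eta^p$ drop out of the test against $v^h \in X_h$ and against $\varphi^h$; this is the reason for introducing $\phi$, and it keeps the number of troublesome projection terms minimal. The surviving projection terms are $(\eta^u_{n,\alpha}/\widehat k_n, v^h)$ from the time derivative, the nonlinear convection and cubic terms, and the $\rho$ term, all of which are $\mathcal{O}(h^{r+2})$ in $L^2$ by \eqref{Stokes-type projection}.

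Next I would choose $v^h = \psi^u_{n,\beta}$, $\varphi^h = \psi^w_{n,\beta}$, $\zeta^h = \psi^p_{n,\beta}$ (and $q^h = \psi^p_{n,\beta}$) and add the four resulting identities. The left side produces, via Lemma \ref{G-stable-lemma}, the telescoping $G$-norm difference $\|[\psi^u_{n+1};\psi^u_n]\|^2_{G(\theta)} - \|[\psi^u_n;\psi^u_{n-1}]\|^2_{G(\theta)}$ plus the dissipation $\|\sum_\ell a^{(n)}_\ell \psi^u_{n-1+\ell}\|^2$, together with $\mu\widehat k_n\|\nabla\psi^u_{n,\beta}\|^2 + \gamma\widehat k_n\|\psi^w_{n,\beta}\|^2$ and the monotone cubic contribution from Lemma \ref{vector_norm_inequality_lem}\eqref{vector_norm_inequality_1}. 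The right side collects: (i) consistency residuals, bounded by Young's inequality against $\tfrac{\mu}{8}\widehat k_n\|\nabla\psi^u_{n,\beta}\|^2$ plus $\mathcal{O}(k_{\max}^4)$ data terms; (ii) projection residuals, bounded similarly with $\mathcal{O}(h^{2r+2})$ contributions; (iii) the $\rho$ term $\rho\widehat k_n\|\psi^u_{n,\beta}\|^2$, absorbed under the time-step constraint \eqref{time_condition_eq2}; and (iv) the nonlinear terms.

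The main obstacle is the nonlinear convection term. Writing $b(u^h_{n,\beta},u^h_{n,\beta},\psi^u_{n,\beta}) - b(u(t_{n,\beta}),u(t_{n,\beta}),\psi^u_{n,\beta})$ and splitting $u^h_{n,\beta} = u(t_{n,\beta}) + \psi^u_{n,\beta} - \eta^u_{n,\beta} + (u_{n,\beta}-u(t_{n,\beta}))$, one gets three generic pieces: $b(e,u,\psi^u_{n,\beta})$, $b(u,e,\psi^u_{n,\beta})$, and $b(e,e,\psi^u_{n,\beta})$ where $e$ stands for an error quantity. The term $b(\psi^u_{n,\beta}, u(t_{n,\beta}), \psi^u_{n,\beta})$ is the dangerous one: using \eqref{b 0 1 1 1} it is bounded by $C^\ast\|\nabla u(t_{n,\beta})\|\,\|\psi^u_{n,\beta}\|^{1/2}\|\nabla\psi^u_{n,\beta}\|^{3/2}$, and a Young's inequality with exponents $(4,4/3)$ yields $\tfrac{\mu}{8}\widehat k_n\|\nabla\psi^u_{n,\beta}\|^2 + C\widehat k_n\frac{\nu^4}{\mu^3}\|\nabla u(t_{n,\beta})\|^4\|\psi^u_{n,\beta}\|^2$ — the second term is exactly what the time-step condition \eqref{time_condition_eq2} is engineered to control (after replacing $\|\nabla u(t_{n,\beta})\|$ by $\|\nabla u_{n,\beta}\|$ up to a consistency error). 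The remaining bilinear pieces are handled with \eqref{b 1 1 1}, \eqref{b 0 1 1 1} and the projection bound \eqref{Stokes-type projection}, each contributing $\mathcal{O}(h^{2r+2})$ or being absorbed into dissipation; the quadratic-in-error piece $b(\psi^u_{n,\beta},\psi^u_{n,\beta},\psi^u_{n,\beta})=0$ by skew-symmetry, while $b(\eta^u_{n,\beta},\psi^u_{n,\beta},\psi^u_{n,\beta})$ and its mirror need the inverse inequality \eqref{inverse-ineq} or smallness of $h$ to be absorbed. The cubic Landau term $\lambda(|u^h_{n,\beta}|^2u^h_{n,\beta}-|u_{n,\beta}|^2u_{n,\beta},\psi^u_{n,\beta})$ splits into the monotone part (kept on the left) and a cross term bounded via \eqref{vector_norm_inequality_2}, Hölder, and the $L^\infty$ bound on $u$ from Assumption \ref{u_p_T_space}, again absorbed or yielding $\mathcal{O}(h^{2r+2})$.

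Finally I would sum over $n=1,\dots,m-1$, use that the $G$-norm sum telescopes to $\|[\psi^u_m;\psi^u_{m-1}]\|^2_{G(\theta)}$ (with initial term $\mathcal{O}(h^{2r+2}+k_{\max}^4)$ from the Crank--Nicolson start-up and $u_0^h=\mathcal{S}_hu_0$), apply the discrete Grönwall inequality \cite[p.369]{MR1043610} to remove the $\widehat k_n\|\psi^u_{n,\beta}\|^2$ terms left over after \eqref{time_condition_eq2} is used (the constant in the exponential depending on $\theta$, $T$, and the $L^\infty$-norms of $u$), and obtain a bound on $\max_n\|\psi^u_n\|^2 + \sum_n\widehat k_n\|\nabla\psi^u_{n,\beta}\|^2 + \sum_n\widehat k_n\|\psi^w_{n,\beta}\|^2$. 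The triangle inequality $e^u_n = \psi^u_n - \eta^u_n$ together with \eqref{Stokes-type projection} then converts this into the claimed estimate \eqref{proof of max u-u^h T-T^h error}. One technical point deserving care: the constraint \eqref{time_condition_eq2} is stated in terms of $\|\nabla u_{n,\beta}\|$ rather than the numerical solution, so no a posteriori or bootstrap argument on $\|\nabla u^h_{n,\beta}\|$ is needed — it is a genuine (verifiable) hypothesis on the exact solution and the grid, which is what makes the estimate clean.
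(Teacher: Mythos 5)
Your proposal follows essentially the same route as the paper: decompose the error via the Stokes-type projection so the $\mu$, $\gamma$ and pressure couplings cancel, test with $\psi^u_{n,\beta}$, invoke the $G$-stability identity and the monotonicity of the cubic term, absorb the dangerous piece $b(\psi^u_{n,\beta},u,\psi^u_{n,\beta})$ via \eqref{b 0 1 1 1} and Young's inequality into the time-step condition \eqref{time_condition_eq2}, then apply the discrete Gr\"onwall inequality and the triangle inequality. The only differences are organizational (the paper keeps the consistency comparison $u_{n,\beta}$ versus $u(t_{n,\beta})$ inside a separate truncation term rather than merging it into the convective splitting, and its choice of slots in the trilinear decomposition --- e.g.\ $b(u^h_{n,\beta},\eta^u_{n,\beta},\psi^u_{n,\beta})$ bounded by \eqref{b 1 1 1} and the discrete energy bound --- avoids any appeal to the inverse inequality, which is reserved for the $H^1$ estimate; note also that $b(\eta^u_{n,\beta},\psi^u_{n,\beta},\psi^u_{n,\beta})=0$ by skew-symmetry).
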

    
    We first prove Lemma \ref{error estimate L2 lem} - \ref{truncation error estimate} and then finish the proof of 
    Theorem \ref{Intermediate conclusion}.
    \begin{lemma}\label{error estimate L2 lem}
		For $1 < m \leq M$, the following estimate holds:
		\begin{align} \label{error estimate L2 inequality}
			&\frac{1}{4}(1+\theta)\|\psi^u_m\|^2 + \frac{1}{4}(1-\theta)\|\psi^u_{m-1}\|^2 
			+ \sum_{n=1}^{m-1} \Big\| \sum_{\ell=0}^{2} a_{\ell}^{(n)} \psi^u_{n-1+\ell} \Big\|^2 \notag
			\\
			&+ \sum_{n=1}^{m-1}  \widehat{k}_n \Big\{
			\rho \|\psi^u_{n,\beta}\|^2
			+\mu \| \nabla \psi^u_{n,\beta}\|^2
			+\gamma \|\psi^w_{n,\beta}\|^2
			+ C \lambda  \|\psi^u_{n,\beta}\|^4_{L^4}
			\Big\} \notag
			\\
			\leq &
			\sum_{n=1}^{m-1} | (\eta^u_{n,\alpha} , \psi^u_{n,\beta})|
			+ \sum_{n=1}^{m-1} |\rho|\widehat{k}_n |(\eta^u_{n,\beta}, \psi^u_{n,\beta})|
			\\
			& + \sum_{n=1}^{m-1} \widehat{k}_n \nu |b(u_{n,\beta}, u_{n,\beta}, \psi^u_{n,\beta}) - b(u_{n,\beta}^h, u_{n,\beta}^h, \psi^u_{n,\beta})| 
			\notag
			\\
			& + \sum_{n=1}^{m-1} \lambda \widehat{k}_n |(|u_{n,\beta}|^2 u_{n,\beta} - |\mathcal{S}_h u_{n,\beta}|^2\mathcal{S}_h u_{n,\beta}, \psi^u_{n,\beta})|
			\notag
			\\
			& + \sum_{n=1}^{m-1}\widehat{k}_n |\tau(u,w,p,\psi^u_{n,\beta})|
			+ \frac{1}{4}(1+\theta)\|\psi^u_1\|^2 + \frac{1}{4}(1-\theta)\|\psi^u_{0}\|^2.
			\notag
		\end{align}
	\end{lemma}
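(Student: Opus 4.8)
The plan is to derive \eqref{error estimate L2 inequality} by subtracting the variational formulation \eqref{variational-formula-1}--\eqref{variational-formula-3}, evaluated at the DLN time points, from the fully discrete scheme \eqref{fully discrete formulations second 1}--\eqref{fully discrete formulations second 3}, testing the resulting error equation with the discrete velocity error, and applying the $G$-stability identity to telescope in time. First I would write the DLN scheme in the form that isolates the true-solution residual: insert $u(t_{n,\beta})$, $w(t_{n,\beta})$, $p(t_{n,\beta})$ and $u_t(t_{n,\beta})$ into the continuous equation, thereby defining the consistency functional $\tau(u,w,p,v^h)$ that collects the difference between $u_{n,\alpha}/\widehat k_n$ and $u_t(t_{n,\beta})$, between $(\cdot)_{n,\beta}$ and evaluation at $t_{n,\beta}$, and the $f$-term discrepancy. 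Splitting each error via $e = \psi - \eta$ (Stokes projection decomposition), the projection identities \eqref{Stokes-type-projection-defination} kill the leading viscous/coupling/pressure terms acting on $\eta$, leaving an equation for the discrete part $\psi^u$.

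Next I would choose the test functions $v^h = \psi^u_{n,\beta}$ in the momentum error equation, $\varphi^h = \psi^w_{n,\beta}$ in the auxiliary error equation, and $q^h = \psi^p_{n,\beta}$ in the divergence error equation, and add the three. The pressure terms $(\psi^p_{n,\beta}, \nabla\cdot\psi^u_{n,\beta})$ cancel against $(\nabla\cdot\psi^u_{n,\beta}, \psi^p_{n,\beta})$ by \eqref{fully discrete formulations second 3} together with the projection's divergence-free property, and $\eta^p$ contributes nothing after projection; the coupling terms $\gamma(\nabla\psi^w_{n,\beta},\nabla\psi^u_{n,\beta})$ and $\gamma(\nabla\psi^u_{n,\beta},\nabla\psi^w_{n,\beta})$ combine with the second equation tested by $\psi^w_{n,\beta}$ to produce the positive term $\gamma\|\psi^w_{n,\beta}\|^2$ (up to the $\eta^w$ remainder, which vanishes by the projection definition). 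For the quartic term I would write $(|u^h_{n,\beta}|^2 u^h_{n,\beta} - |u_{n,\beta}|^2 u_{n,\beta}, \psi^u_{n,\beta})$, split off $|\mathcal S_h u_{n,\beta}|^2 \mathcal S_h u_{n,\beta}$, and invoke the monotonicity estimate \eqref{vector_norm_inequality_1} to extract $C\lambda\|\psi^u_{n,\beta}\|_{L^4}^4$ as a good term on the left, leaving the projection-error piece on the right. The term $\frac{1}{\widehat k_n}(\psi^u_{n,\alpha},\psi^u_{n,\beta})$ is handled by the $G$-stability identity of Lemma \ref{G-stable-lemma}, yielding the telescoping $G$-norm difference and the dissipation term $\|\sum_\ell a_\ell^{(n)}\psi^u_{n-1+\ell}\|^2$.

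After these manipulations, summing over $n$ from $1$ to $m-1$ telescopes the $G$-norms to $\tfrac14(1+\theta)\|\psi^u_m\|^2 + \tfrac14(1-\theta)\|\psi^u_{m-1}\|^2$ on the left and $\tfrac14(1+\theta)\|\psi^u_1\|^2 + \tfrac14(1-\theta)\|\psi^u_0\|^2$ on the right, producing exactly \eqref{error estimate L2 inequality}; the remaining right-hand-side terms are the consistency functional $\widehat k_n\,\tau$, the $\eta^u_{n,\alpha}$ time-derivative projection term, the $|\rho|\widehat k_n(\eta^u_{n,\beta},\psi^u_{n,\beta})$ reaction term, the nonlinear convection difference $b(u_{n,\beta},u_{n,\beta},\cdot) - b(u^h_{n,\beta},u^h_{n,\beta},\cdot)$, and the quartic projection-error term, all left unestimated at this stage since they are bounded in the subsequent lemmas. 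I expect the main obstacle to be bookkeeping the nonlinear trilinear term: one must add and subtract $b(u_{n,\beta}, u^h_{n,\beta}, \cdot)$ or $b(u^h_{n,\beta}, u_{n,\beta}, \cdot)$ to split the difference into manageable pieces, and carefully track that the skew-symmetry $b(u^h_{n,\beta},u^h_{n,\beta},\psi^u_{n,\beta}) \ne 0$ in general (unlike in the stability proof, here the first two arguments differ from the test function), so the cubic term in $\psi^u$ does not vanish — but at the level of this lemma it is simply carried along as $\nu|b(u_{n,\beta},u_{n,\beta},\psi^u_{n,\beta}) - b(u^h_{n,\beta},u^h_{n,\beta},\psi^u_{n,\beta})|$. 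Verifying the exact cancellations in the pressure and coupling blocks, and confirming the sign of the monotone quartic contribution, are the other points requiring care.
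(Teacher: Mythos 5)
Your proposal is correct and follows essentially the same route as the paper: form the error equations via the Stokes-type projection decomposition $e=\psi-\eta$ with the truncation functional $\tau$, test with $(\psi^u_{n,\beta},\psi^w_{n,\beta},\psi^p_{n,\beta})$ so the pressure term vanishes and the coupling term becomes $\gamma\|\psi^w_{n,\beta}\|^2$, split the cubic term through $\mathcal{S}_h u_{n,\beta}$ and apply the monotonicity bound \eqref{vector_norm_inequality_1}, then invoke the $G$-stability identity and telescope the sum. All the key cancellations and the placement of the unestimated right-hand-side terms match the paper's argument.
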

    \begin{proof}
        The true solution pair $(u,w, p)$ in \eqref{variational-formula-1}-\eqref{variational-formula-3} satisfies the following weak formulation at time \(t_{n,\beta}\): for all $(v^h, \varphi^h, q^h) \in X_h \times  V_h \times Q_h$
	\begin{align} 
		\begin{split}\label{formulation 1}
			&\frac{1}{\widehat{k}_n}(u_{n,\alpha}, v^h) 
			+ \mu(\nabla u_{n,\beta}, \nabla v^h)
			+ \gamma (\nabla w_{n,\beta}, \nabla v^h)
			+ \nu b(u_{n,\beta}, u_{n,\beta}, v^h) 
			\\
			&~+\rho (u_{n,\beta}, v^h)
			+\lambda (|u_{n,\beta}|^2u_{n,\beta}, v^h)
			- (p_{n,\beta}, \nabla \cdot v^h) 
			=(f_{n,\beta},v^h)+  \tau(u,w,p, v^h),
		\end{split}	
		\\
		&(w_{n,\beta}, \varphi^h) - (\nabla u_{n,\beta},\nabla \varphi^h)
		=0, \label{formulation 2}
		\\
		&(\nabla \cdot u_{n,\beta} , q^h)=0,\label{formulation 3}
	\end{align}
	where $\tau(u,w,p, v^h)$ denotes the truncation error
	\begin{align} \label{truncation}
		\begin{split}
			\tau(u,w,p, v^h) = & \Big(\frac{1}{\widehat{k}_n} u_{n,\alpha} - u_t(t_{n,\beta}), v^h \Big) 
			+ \mu \big( \nabla (u_{n,\beta} -u(t_{n,\beta})), \nabla v^h \big) 
			\\
			&
            + \nu \big( b(u_{n,\beta}, u_{n,\beta}, v^h) - b(u(t_{n,\beta}), u(t_{n,\beta}), v^h) \big)
			\\
			&+ \rho \big( u_{n,\beta}-u(t_{n,\beta}), v^h \big)
			+ \gamma \big( \nabla (w_{n,\beta} - w(t_{n,\beta})), \nabla v^h \big)
			\\
			&+\lambda \big( |u_{n,\beta}|^2 u_{n,\beta}  - |u(t_{n,\beta})|^2 u(t_{n,\beta}), v^h \big),
			\\
			&- \big( p_{n,\beta} - p(t_{n,\beta}), \nabla \cdot v^h \big) 
            - \big(f_{n,\beta} - f(t_{n,\beta}),v^h \big).
		\end{split}
	\end{align}
	By subtracting \eqref{fully discrete formulations second 1} from \eqref{formulation 1} and 
    a suitable linear combination of \eqref{fully discrete formulations second 2}-\eqref{fully discrete formulations second 3} from \eqref{formulation 2}-\eqref{formulation 3},
	and further employing the incompressibility condition $\nabla \cdot u = 0$ along with the Stokes-type projection $\mathcal{S}_h$ defined in \eqref{Stokes-type-projection-defination}, 
	we have the corresponding error equations:
	\begin{align}
	\begin{split}\label{error equation 1}
	&\frac{1}{\widehat{k}_n}(\psi^u_{n,\alpha}, v^h) 
	+ \mu (\nabla \psi^u_{n,\beta}, \nabla v^h) 
	+\gamma (\nabla \psi^w_{n,\beta},\nabla v^h)
	+ \rho (\psi^u_{n,\beta},v^h) 
	- (\psi^p_{n,\beta}, \nabla \cdot v^h)
	\\
	&=  \frac{1}{\widehat{k}_n}(\eta^u_{n,\alpha}, v^h) 
	+\nu b(u_{n,\beta}, u_{n,\beta}, v^h)
	-\nu b(u_{n,\beta}^h, u_{n,\beta}^h, v^h)
	+ \rho (\eta^u_{n,\beta},v^h) 
	\\
	& \quad + \lambda (|u_{n,\beta}|^2 u_{n,\beta}  - |u_{n,\beta}^h|^2 u_{n,\beta}^h ,v^h)
	- \tau(u,w,p, v^h),
	\end{split}
	\\
	&(\psi^w_{n,\beta}, \varphi^h) - (\nabla \psi^u_{n,\beta}, \nabla \varphi^h)
	=0, \label{error equation 2}
	\\
	&(\nabla \cdot \psi^u_{n,\beta} , q^h)=0.
	\label{error equation 3}
	\end{align}
	We set \( v^h = \psi^u_{n,\beta} \) in \eqref{error equation 1},  \( \varphi^h = \psi^w_{n,\beta} \) in \eqref{error equation 2}, \( q^h = \psi^p_{n,\beta} \) in \eqref{error equation 3}, and multiply \eqref{error equation 1} by \( \widehat{k}_n \)
    \begin{align} \label{error equations n beta}
        \begin{split}
            &(\psi^u_{n,\alpha}, \psi^u_{n,\beta}) 
            + \mu \widehat{k}_n \| \nabla \psi^u_{n,\beta}\|^2
            +\gamma\widehat{k}_n(\nabla \psi^w_{n,\beta}, \nabla \psi^u_{n,\beta})
            +\rho\widehat{k}_n (\psi^u_{n,\beta}, \psi^u_{n,\beta})
            \\
            &
            + \lambda \widehat{k}_n (|u^h_{n,\beta}|^2 u^h_{n,\beta}  - |\mathcal{S}_h u_{n,\beta}|^2\mathcal{S}_h u_{n,\beta} ,\psi^u_{n,\beta})
            - \widehat{k}_n(\psi^p_{n,\beta}, \nabla \cdot \psi^u_{n,\beta})
            \\
            = &(\eta^u_{n,\alpha} , \psi^u_{n,\beta})
            + \rho\widehat{k}_n (\eta^u_{n,\beta}, \psi^u_{n,\beta})
            \\
            &+ \widehat{k}_n \nu b(u_{n,\beta}, u_{n,\beta}, \psi^u_{n,\beta})
            - \widehat{k}_n \nu b(u_{n,\beta}^h, u_{n,\beta}^h, \psi^u_{n,\beta})
            \\
            &+ \lambda \widehat{k}_n(|u_{n,\beta}|^2 u_{n,\beta} - |\mathcal{S}_h u_{n,\beta}|^2\mathcal{S}_h u_{n,\beta}, \psi^u_{n,\beta})
            - \widehat{k}_n \tau(u,w,p, \psi^u_{n,\beta}),
        \end{split}
        \\
        \begin{split}
            (\psi^w_{n,\beta},\psi^w_{n,\beta}) 
            = (\nabla \psi^u_{n,\beta}, \nabla \psi^w_{n,\beta}),
        \end{split}
        \\
        \begin{split}
            (\nabla \cdot \psi^u_{n,\beta}, \psi^p_{n,\beta}) = 0.
        \end{split}
    \end{align}
    We combine \eqref{error equations n beta} and utilize \eqref{vector_norm_inequality_1}, 
    \eqref{G-stable-lemma} to derive the following estimate:
		\begin{align*} 
			&\Big\|
			\begin{array}{cc}
				\psi^u_{n+1} \\
				\psi^u_{n}\\
			\end{array} \Big\|^2_{G(\theta )}
			-
			\Big\|
			\begin{array}{cc}
				\psi^u_{n} \\
				\psi^u_{n-1}\\
			\end{array} \Big\|^2_{G(\theta )}
			+ \Big\| \sum_{\ell=0}^{2} a_{\ell}^{(n)} \psi^u_{n-1+\ell} \Big\|^2
			\\
			&+ \rho \widehat{k}_n \|\psi^u_{n,\beta}\|^2
			+ \mu \widehat{k}_n \| \nabla \psi^u_{n,\beta}\|^2
			+ \gamma \widehat{k}_n \|\psi^w_{n,\beta}\|^2
			+ C \lambda \widehat{k}_n  \|\psi^u_{n,\beta}\|^4_{L^4}
			\\
			\leq &
			|(\eta^u_{n,\alpha} , \psi^u_{n,\beta})|
			+ |\rho|\widehat{k}_n |(\eta^u_{n,\beta}, \psi^u_{n,\beta})|
			+ \lambda \widehat{k}_n|(|u_{n,\beta}|^2 u_{n,\beta} - |\mathcal{S}_h u_{n,\beta}|^2\mathcal{S}_h u_{n,\beta}, \psi_{n,\beta}^u )|
			\\
			&+ \widehat{k}_n \nu (b(u_{n,\beta}, u_{n,\beta}, \psi^u_{n,\beta}) - b(u_{n,\beta}^h, u_{n,\beta}^h, \psi^u_{n,\beta}))
			+\widehat{k}_n |\tau(u,w,p,\psi^u_{n,\beta})|.
		\end{align*}
        We sum the above inequality from $n=1$ to $m-1$ and derive \eqref{error estimate L2 inequality} by the definition of the $G$-norm in \eqref{G-norm}.
    \end{proof}

    We address the terms on the right side on inequality \eqref{error estimate L2 inequality} by Lemma \ref{derivation term} - \ref{truncation error estimate}  
    \begin{lemma}\label{derivation term}
		Suppose Assumption \ref{u_p_T_space} holds. For $ 1 \leq n \leq M-1 $, there exists a constant $ C \geq 0 $ independent of mesh size $h$ and the time step $k_n$, such that 
		\begin{align} \label{derivation-term-conclusion}
            \begin{split}
			& |(\eta^u_{n,\alpha}, \psi^u_{n,\beta})| + |\rho| \widehat{k}_n |(\eta^u_{n,\beta}, \psi^u_{n,\beta})| \\
			\leq &  C(\theta) \frac{h^{2r+4}}{\mu} \int_{t_{n-1}}^{t_{n+1}} \big( \| u_t \|_{r+2}^2 + \| w_t \|_{r+2}^2 + \| p_t \|_{r+1}^2\big) \, \mathrm{d}t 
            +  \frac{\mu}{16}\widehat{k}_n \| \psi^u_{n,\beta} \|^2 \\
            &+ \frac{ C(\theta) \widehat{k}_n \rho^2 h^{2r+4}}{\mu} \big( \| u \|_{\infty,r+2} +  \| w \|_{\infty,r+2} + \| p \|_{\infty,r+1} \big)^{2}.
            \end{split}
		\end{align}
	\end{lemma}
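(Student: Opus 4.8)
\emph{Proof plan.} The plan is to split the left-hand side of \eqref{derivation-term-conclusion} into its two indicated pieces and estimate each by a Cauchy--Schwarz inequality in space followed by Young's inequality, arranging the splitting so that the two resulting $\|\psi^u_{n,\beta}\|^2$ contributions each carry the coefficient $\tfrac{\mu}{32}\widehat{k}_n$ and thus combine into the single term $\tfrac{\mu}{16}\widehat{k}_n\|\psi^u_{n,\beta}\|^2$ appearing in \eqref{derivation-term-conclusion}. Beyond this, I will use three structural facts: (i) $\mathcal{S}_h$ is linear and time-independent, so $\partial_t\eta^u=\partial_t(u-\mathcal{S}_hu)=u_t-\mathcal{S}_hu_t$ is exactly the first component of the Stokes-type projection error of the triple $(u_t,w_t,p_t)$, and hence the approximation bound \eqref{Stokes-type projection} applies to it (this uses $u\in W^{3,2}([0,T];[H^{r+2}]^2)$, $w\in W^{2,2}([0,T];[H^{r+2}]^2)$, $p\in W^{2,2}([0,T];H^{r+1})$ from Assumption \ref{u_p_T_space} to guarantee $u_t,w_t\in H^{r+2}$, $p_t\in H^{r+1}$ a.e.\ in time); (ii) $\sum_{\ell=0}^2\alpha_\ell=0$ and $\sum_{\ell=0}^2\beta_\ell^{(n)}=1$ for the DLN coefficients in \eqref{DLNcoeff}, with $|\alpha_\ell|,|\beta_\ell^{(n)}|\le C(\theta)$ uniformly in $n$; and (iii) the weighted step is equivalent to the local step sum, $c_1(\theta)(k_n+k_{n-1})\le\widehat{k}_n\le c_2(\theta)(k_n+k_{n-1})$, both available from the DLN references cited in the paper.

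For the first piece, since $\sum_\ell\alpha_\ell=0$ I would rewrite $\eta^u_{n,\alpha}=\alpha_2(\eta^u_{n+1}-\eta^u_n)-\alpha_0(\eta^u_n-\eta^u_{n-1})=\alpha_2\int_{t_n}^{t_{n+1}}\partial_t\eta^u\,\mathrm{d}t-\alpha_0\int_{t_{n-1}}^{t_n}\partial_t\eta^u\,\mathrm{d}t$, whence $\|\eta^u_{n,\alpha}\|\le C(\theta)\int_{t_{n-1}}^{t_{n+1}}\|\partial_t\eta^u\|\,\mathrm{d}t$. Cauchy--Schwarz in time gives $\|\eta^u_{n,\alpha}\|^2\le C(\theta)(k_n+k_{n-1})\int_{t_{n-1}}^{t_{n+1}}\|\partial_t\eta^u\|^2\,\mathrm{d}t$, and then \eqref{Stokes-type projection} applied to $(u_t,w_t,p_t)$ bounds the integrand by $Ch^{2r+4}(\|u_t\|_{r+2}^2+\|w_t\|_{r+2}^2+\|p_t\|_{r+1}^2)$. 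Applying Cauchy--Schwarz and Young to $(\eta^u_{n,\alpha},\psi^u_{n,\beta})$ yields $|(\eta^u_{n,\alpha},\psi^u_{n,\beta})|\le\tfrac{C}{\mu\widehat{k}_n}\|\eta^u_{n,\alpha}\|^2+\tfrac{\mu}{32}\widehat{k}_n\|\psi^u_{n,\beta}\|^2$, and using $(k_n+k_{n-1})/\widehat{k}_n\le C(\theta)$ the first summand becomes $C(\theta)\tfrac{h^{2r+4}}{\mu}\int_{t_{n-1}}^{t_{n+1}}(\|u_t\|_{r+2}^2+\|w_t\|_{r+2}^2+\|p_t\|_{r+1}^2)\,\mathrm{d}t$, which is the first term on the right of \eqref{derivation-term-conclusion}.

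For the second piece, using $\sum_\ell|\beta_\ell^{(n)}|\le C(\theta)$ I would bound $\|\eta^u_{n,\beta}\|\le C(\theta)\max_{j\in\{n-1,n,n+1\}}\|\eta^u_j\|$, and by \eqref{Stokes-type projection} together with Assumption \ref{u_p_T_space} each $\|\eta^u_j\|\le Ch^{r+2}(\|u\|_{\infty,r+2}+\|w\|_{\infty,r+2}+\|p\|_{\infty,r+1})$. Then Cauchy--Schwarz and Young give $|\rho|\widehat{k}_n|(\eta^u_{n,\beta},\psi^u_{n,\beta})|\le\tfrac{C\rho^2\widehat{k}_n}{\mu}\|\eta^u_{n,\beta}\|^2+\tfrac{\mu}{32}\widehat{k}_n\|\psi^u_{n,\beta}\|^2\le C(\theta)\tfrac{\widehat{k}_n\rho^2h^{2r+4}}{\mu}(\|u\|_{\infty,r+2}+\|w\|_{\infty,r+2}+\|p\|_{\infty,r+1})^2+\tfrac{\mu}{32}\widehat{k}_n\|\psi^u_{n,\beta}\|^2$, supplying the third term on the right of \eqref{derivation-term-conclusion}. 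Summing the two pieces, the two $\tfrac{\mu}{32}\widehat{k}_n\|\psi^u_{n,\beta}\|^2$ contributions combine to $\tfrac{\mu}{16}\widehat{k}_n\|\psi^u_{n,\beta}\|^2$, which completes \eqref{derivation-term-conclusion}.

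The argument is a routine chain of Stokes-projection approximation estimates, Cauchy--Schwarz and Young's inequalities, so there is no serious obstacle. The only points requiring genuine care are the bookkeeping of the $\theta$-dependent bounds on the DLN coefficients and on $\widehat{k}_n/(k_n+k_{n-1})$, and the commutation observation that differentiating the coupled Stokes-type projection \eqref{Stokes-type-projection-defination} in time produces the Stokes-type projection of $(u_t,w_t,p_t)$, which is what legitimizes applying \eqref{Stokes-type projection} to $\partial_t\eta^u$.
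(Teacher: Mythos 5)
Your proposal is correct and follows essentially the same route as the paper: both arguments reduce $\eta^u_{n,\alpha}$ to a time integral of the Stokes-type projection error of $(u_t,w_t,p_t)$ (using $\sum_\ell\alpha_\ell=0$ and the linearity/time-independence of $\mathcal{S}_h$, so it is immaterial whether the projection estimate \eqref{Stokes-type projection} is applied before or after the fundamental theorem of calculus), bound the $\beta$-weighted term by the uniform-in-time projection estimate, and close with Cauchy--Schwarz and Young exactly as you describe. The only caveat is your claim (iii): the lower bound $c_1(\theta)(k_n+k_{n-1})\le\widehat{k}_n$ fails at $\theta=1$ (where $\widehat{k}_n=k_n$), but the argument still goes through there because $\alpha_0=0$ kills the contribution from $[t_{n-1},t_n]$ — the paper handles this endpoint case separately in the same way.
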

    \begin{proof}
        We first prove the case $\theta \in [0,1)$. By Cauchy-Schwarz inequality, approximation of Stokes-type projection $\mathcal{S}_h$ in \eqref{Stokes-type projection}
        \begin{align} \label{derivation-term-eq1}
            \begin{split}
			|(\eta^u_{n,\alpha}, \psi^u_{n,\beta})| 
			&\leq C h^{r+2} \big( \| u_{n,\alpha} \|_{r+2} + \| w_{n,\alpha} \|_{r+2} 
            + \| p_{n,\alpha} \|_{r+1} \big) \| \nabla \psi^u_{n,\beta} \|.
            \end{split}
		\end{align}
        By fundamental theorem of Calculus, triangle inequality and H\"older's inequality
        \begin{align} \label{derivation-term-eq2}
            \begin{split}
            \| u_{n,\alpha} \|_{r+2} =&
            \Big\| \frac{1}{2}(\theta - 1) u_{n-1} - \theta u_{n} + \frac{1}{2}(\theta + 1) u_{n+1} \Big\|_{r+2}    \\
            \leq & \frac{1}{2}(\theta + 1) \Big\| \int_{t_n}^{t_{n+1}} u_t \, \mathrm{d}t \Big\|_{r+2} + \frac{1}{2}(1 - \theta) \Big\| \int_{t_{n-1}}^{t_n} u_t \, \mathrm{d}t \Big\|_{r+2}  \\
            \leq & C(\theta) k_{n}^{\frac{1}{2}} \Big( \int_{t_n}^{t_{n+1}} \| u_t \|_{r+2}^2 \, \mathrm{d}t \Big)^{\frac{1}{2}} + C(\theta) k_{n-1}^{\frac{1}{2}} \Big( \int_{t_{n-1}}^{t_n} \| u_t \|_{r+2}^2 \, \mathrm{d}t \Big)^{\frac{1}{2}},  \\
            \| w_{n,\alpha} \|_{r+2} 
            \leq & C(\theta) k_{n}^{\frac{1}{2}} \Big( \int_{t_n}^{t_{n+1}} \| w_t \|_{r+2}^2 \, \mathrm{d}t \Big)^{\frac{1}{2}} + C(\theta) k_{n-1}^{\frac{1}{2}} \Big( \int_{t_{n-1}}^{t_n} \| w_t \|_{r+2}^2 \, \mathrm{d}t \Big)^{\frac{1}{2}},  \\
            \| p_{n,\alpha} \|_{r+1} 
            \leq & C(\theta) k_{n}^{\frac{1}{2}} \Big( \int_{t_n}^{t_{n+1}} \| p_t \|_{r+1}^2 \, \mathrm{d}t \Big)^{\frac{1}{2}} + C(\theta) k_{n-1}^{\frac{1}{2}} \Big( \int_{t_{n-1}}^{t_n} \| p_t \|_{r+1}^2 \, \mathrm{d}t \Big)^{\frac{1}{2}}. 
            \end{split}
		\end{align}
        We combine \eqref{derivation-term-eq1}, \eqref{derivation-term-eq2} and use Young's inequality to have  
        \begin{equation} \label{derivation-term-eq3}
			|(\eta^u_{n,\alpha}, \psi^u_{n,\beta})| 
            \leq C(\theta) \frac{h^{2r+4}}{\mu} \int_{t_{n-1}}^{t_{n+1}} \big( \| u_t \|_{r+2}^2 + \| w_t \|_{r+2}^2 + \| p_t \|_{r+1}^2\big) \, \mathrm{d}t 
            +  \frac{\mu}{32}\widehat{k}_n \| \psi^u_{n,\beta} \|^2.
		\end{equation}
        By Cauchy-Schwarz inequality and Young's inequality
		\begin{align} 
				|\rho| \widehat{k}_n |(\eta^u_{n,\beta}, \psi^u_{n,\beta})| 
                \leq& C \rho^2 \mu^{-1} \widehat{k}_n \|\eta^u_{n,\beta}\|^2 
				+ \frac{\mu}{32}\widehat{k}_n \|\psi^u_{n,\beta}\|^2 
                \label{derivation-term-eq4} \\
                \leq& \frac{C(\theta) \widehat{k}_n \rho^2 h^{2r+4}}{\mu} \big( \| u \|_{\infty,r+2} +  \| w \|_{\infty,r+2} + \| p \|_{\infty,r+1} \big)^{2} 
                + \frac{\mu}{32}\widehat{k}_n \|\psi^u_{n,\beta}\|^2. \notag 
		\end{align}
        We combine \eqref{derivation-term-eq3} and \eqref{derivation-term-eq4} to 
        obtain \eqref{derivation-term-conclusion}.
        If $\theta = 1$, $\widehat{k}_n = k_{n}$ and \eqref{derivation-term-eq2} becomes  
        \begin{align*}
            \| u_{n,\alpha} \|_{r+2} =
            \big\| u_{n+1} - u_{n} \big\|_{r+2} \leq k_{n}^{\frac{1}{2}} \Big( \int_{t_n}^{t_{n+1}} \| u_t \|_{r+2}^2 \, \mathrm{d}t \Big)^{\frac{1}{2}}.
        \end{align*}
        Then the proof is very similar to the case of $\theta \in [0,1)$.
    \end{proof}

    \begin{lemma}\label{analytical b - numerical b}
		Under Assumption \ref{u_p_T_space}, the following bound holds:
		\begin{align} 
			&\nu \big( b(u_{n,\beta}, u_{n,\beta}, \psi^u_{n,\beta}) - b(u_{n,\beta}^h, u_{n,\beta}^h, \psi^u_{n,\beta}) \big) 
            \label{analytical b - numerical b-conclusion} \\
			\leq &\frac{ C^{\ast} \nu^{4}}{\mu^{3}}  \|\nabla u_{n,\beta}\|^4 
             \|\psi^u_{n,\beta}\|^2 
             + \frac{C(\theta) \nu^{2} h^{2r+4}}{\mu} \| u \|_{\infty,2}^{2} 
             \big( \| u \|_{\infty,r+2} + \| w \|_{\infty,r+2} 
             + \| p \|_{\infty,r+1} \big)^{2} \notag \\
            &+ \frac{C(\theta) \nu^{2} h^{2r+2}}{\mu} \| \nabla u_{n,\beta}^h \|^{2}
            \big( \| u \|_{\infty,r+2} + \| w \|_{\infty,r+2} 
            + \| p \|_{\infty,r+1} \big)^{2} + \frac{\mu}{16} \|\nabla \psi^u_{n,\beta}\|^2. \notag 
		\end{align}
        for $ 1 \leq n \leq M-1 $. Here $C^{\ast} > 0$ denotes the constant $C$ in \eqref{b 0 1 1 1}.
	\end{lemma}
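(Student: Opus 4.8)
The plan is to use the trilinearity and skew-symmetry of $b(\cdot,\cdot,\cdot)$ to split the difference into pieces that are controlled by the bilinear estimates \eqref{b 1 1 1}, \eqref{b 0 1 1 1}, \eqref{b 0 2 1} and the Stokes-type projection approximation \eqref{Stokes-type projection}. Writing the velocity error as $e^u_{n,\beta} := u^h_{n,\beta} - u_{n,\beta} = \psi^u_{n,\beta} - \eta^u_{n,\beta}$ and adding and subtracting a mixed term, trilinearity gives
\begin{align*}
b(u_{n,\beta}, u_{n,\beta}, \psi^u_{n,\beta}) - b(u^h_{n,\beta}, u^h_{n,\beta}, \psi^u_{n,\beta})
= - b(e^u_{n,\beta}, u_{n,\beta}, \psi^u_{n,\beta}) - b(u^h_{n,\beta}, e^u_{n,\beta}, \psi^u_{n,\beta}).
\end{align*}
Substituting $e^u_{n,\beta} = \psi^u_{n,\beta} - \eta^u_{n,\beta}$ and using the skew-symmetry identity $b(u^h_{n,\beta}, \psi^u_{n,\beta}, \psi^u_{n,\beta}) = 0$ reduces the right-hand side to the three terms
\begin{align*}
- b(\psi^u_{n,\beta}, u_{n,\beta}, \psi^u_{n,\beta})
+ b(\eta^u_{n,\beta}, u_{n,\beta}, \psi^u_{n,\beta})
+ b(u^h_{n,\beta}, \eta^u_{n,\beta}, \psi^u_{n,\beta}),
\end{align*}
which I would estimate one at a time (up to the harmless overall sign).

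For the first term I would use the sharp estimate \eqref{b 0 1 1 1}, namely $|b(\psi^u_{n,\beta}, u_{n,\beta}, \psi^u_{n,\beta})| \le C^\ast \|\psi^u_{n,\beta}\|^{1/2}\|\nabla\psi^u_{n,\beta}\|^{3/2}\|\nabla u_{n,\beta}\|$, and then apply Young's inequality with the conjugate exponents $4/3$ and $4$ so that the factor $\|\nabla\psi^u_{n,\beta}\|^{3/2}$ is raised to $\|\nabla\psi^u_{n,\beta}\|^{2}$. This produces precisely a term of the form $\tfrac{C^\ast\nu^4}{\mu^3}\|\nabla u_{n,\beta}\|^4\|\psi^u_{n,\beta}\|^2$ together with a multiple of $\|\nabla\psi^u_{n,\beta}\|^2$ that is to be absorbed. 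For the second term I would use \eqref{b 0 2 1} and the Poincar\'e inequality to obtain $|b(\eta^u_{n,\beta}, u_{n,\beta}, \psi^u_{n,\beta})| \le C\|\eta^u_{n,\beta}\|\,\|u_{n,\beta}\|_2\,\|\nabla\psi^u_{n,\beta}\|$, then bound $\|\eta^u_{n,\beta}\| \le C(\theta)\big(\|\eta^u_{n+1}\|+\|\eta^u_{n}\|+\|\eta^u_{n-1}\|\big) \le C(\theta)h^{r+2}\big(\|u\|_{\infty,r+2}+\|w\|_{\infty,r+2}+\|p\|_{\infty,r+1}\big)$ via \eqref{Stokes-type projection} together with $\|u_{n,\beta}\|_2 \le \|u\|_{\infty,2}$, and finish with Young's inequality; this yields the $h^{2r+4}$ contribution. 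For the third term I would use \eqref{b 1 1 1}, giving $|b(u^h_{n,\beta}, \eta^u_{n,\beta}, \psi^u_{n,\beta})| \le C\|\nabla u^h_{n,\beta}\|\,\|\nabla\eta^u_{n,\beta}\|\,\|\nabla\psi^u_{n,\beta}\|$, with $\|\nabla\eta^u_{n,\beta}\| \le C(\theta)h^{r+1}\big(\|u\|_{\infty,r+2}+\|w\|_{\infty,r+2}+\|p\|_{\infty,r+1}\big)$ from \eqref{Stokes-type projection}, keeping the discrete norm $\|\nabla u^h_{n,\beta}\|$ intact because no a priori bound on it is used at this stage; Young's inequality then gives the $h^{2r+2}$ contribution.

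Finally I would collect the three bounds, multiply through by $\nu$, and choose the free parameter in each application of Young's inequality so that the coefficients of $\|\nabla\psi^u_{n,\beta}\|^2$ add up to exactly $\mu/16$. The main subtlety is the treatment of the first term: it must be estimated so that the quantity absorbed into the dissipation involves only $\|\nabla\psi^u_{n,\beta}\|^2$ while what remains depends on $\|\psi^u_{n,\beta}\|^2$ (rather than $\|\nabla\psi^u_{n,\beta}\|^2$), since only an $L^2$-type term can later be handled by the discrete Gr\"onwall argument of Theorem \ref{Intermediate conclusion}; this is exactly why the refined inequality \eqref{b 0 1 1 1} and the $4/3$ / $4$ Young splitting are needed, and why the factor $\tfrac{\nu^4}{\mu^3}\|\nabla u_{n,\beta}\|^4$ --- the very quantity appearing in the time-step constraint of Assumption \ref{time_condition_2} --- is forced to appear.
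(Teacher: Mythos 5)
Your proposal is correct and follows essentially the same route as the paper: the identical three-term decomposition $-b(\psi^u_{n,\beta},u_{n,\beta},\psi^u_{n,\beta})+b(\eta^u_{n,\beta},u_{n,\beta},\psi^u_{n,\beta})+b(u^h_{n,\beta},\eta^u_{n,\beta},\psi^u_{n,\beta})$ after killing $b(u^h_{n,\beta},\psi^u_{n,\beta},\psi^u_{n,\beta})$ by skew-symmetry, with \eqref{b 0 1 1 1} plus the $4/3$--$4$ Young splitting for the first term, \eqref{b 0 2 1} for the second, and \eqref{b 1 1 1} for the third, each followed by the projection estimate \eqref{Stokes-type projection}. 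Your closing remark about why the leftover must be $\|\psi^u_{n,\beta}\|^2$ rather than its gradient correctly identifies the role of Assumption \ref{time_condition_2}.
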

    \begin{proof}
        By the skew-symmetric property of $b(\cdot, \cdot, \cdot)$
		\begin{align}
			\begin{split}\label{b-b inequality}
				& \nu b(u_{n,\beta}, u_{n,\beta}, \psi^u_{n,\beta}) - \nu b(u_{n,\beta}^h, u_{n,\beta}^h, \psi^u_{n,\beta}) \\
				=& \nu b(u_{n,\beta} - u_{n,\beta}^h, u_{n,\beta}, \psi^u_{n,\beta}) 
                + \nu b(u_{n,\beta}^h, u_{n,\beta} - u_{n,\beta}^h, \psi^u_{n,\beta}) \\
				=& \nu b(\eta^u_{n,\beta}, u_{n,\beta}, \psi^u_{n,\beta}) - \nu b(\psi^u_{n,\beta}, u_{n,\beta}, \psi^u_{n,\beta}) + \nu b(u_{n,\beta}^h, \eta^u_{n,\beta}, \psi^u_{n,\beta}) 
			\end{split}
		\end{align}
        We utilize the \eqref{b 0 2 1}, Poincar\'e inequality and Young's inequality to obtain:
		\begin{align} 
			\nu b(\eta^u_{n,\beta}, u_{n,\beta}, \psi^u_{n,\beta})
			&\leq C \nu \|\eta^u_{n,\beta}\| \|u_{n,\beta}\|_2 \|\psi^u_{n,\beta}\|_1 
            \label{b-term1} \\
			&\leq C \nu^{2} \mu^{-1} \|\eta^u_{n,\beta}\|^2 \|u_{n,\beta}\|_2^2 + \frac{1}{48}\mu \|\nabla \psi^u_{n,\beta}\|^2. \notag 
		\end{align}
        We make use of \eqref{b 0 1 1 1} and Young's inequality
		\begin{align} 
			- \nu b(\psi^u_{n,\beta}, u_{n,\beta}, \psi^u_{n,\beta}) 
			&\leq C \nu \|\psi^u_{n,\beta}\|^{\frac{1}{2}}\|\nabla \psi^u_{n,\beta}\|^{\frac{1}{2}} \|\nabla u_{n,\beta}\| \|\nabla \psi^u_{n,\beta}\| 
            \label{b-term2} \\
			&\leq C \mu^{-3} \nu^{4} \|\psi^u_{n,\beta}\|^2 \|\nabla u_{n,\beta}\|^4 + \frac{1}{48}\mu \|\nabla \psi^u_{n,\beta}\|^2. \notag 
		\end{align}
		By \eqref{b 1 1 1} and Young's inequality
		\begin{align}
			\nu b(u_{n,\beta}^h, \eta^u_{n,\beta}, \psi^u_{n,\beta}) 
			\leq& C \nu \| \nabla u_{n,\beta}^h \| \| \nabla \eta^u_{n,\beta} \| \| \nabla \psi^u_{n,\beta} \| 
			\label{b-term3} \\
			\leq& C \nu^2 \mu^{-1}\| \nabla u_{n,\beta}^h \|^{2}  \| \nabla \eta^u_{n,\beta} \|^{2} + \frac{1}{48}\mu \|\nabla \psi^u_{n,\beta}\|^2. \notag 
		\end{align}
        We combine \eqref{b-b inequality} - \eqref{b-term3} and use approximation of Stokes-type projection in \eqref{Stokes-type projection} again to 
        to derive \eqref{analytical b - numerical b-conclusion}.
    \end{proof}

    \begin{lemma}\label{cubic term}
		Under Assumption \ref{u_p_T_space}, the following bound holds 
		\begin{align}
				&\lambda |(|u_{n,\beta}|^2 u_{n,\beta} - |\mathcal{S}_h u_{n,\beta}|^2\mathcal{S}_h u_{n,\beta},\psi_{n,\beta}^u )|
				\label{cubic term inequality} \\
				\leq & \frac{C(\theta) \lambda^2 h^{2r+2}}{\mu} \Big[ \| u \|_{L^{\infty}(L^{4})}^{4} 
                + h^{4r+4} \big( \| u \|_{\infty,r+2} 
                + \| w \|_{\infty,r+2} + \| p \|_{\infty,r+1} \big)^{4} \Big] \times \notag \\
				&\times \big( \| u \|_{\infty,r+2} 
                + \| w \|_{\infty,r+2} + \| p \|_{\infty,r+1} \big)^{2}
				+ \frac{1}{16} \mu \|\nabla \psi^u_{n,\beta}\|^2. \notag 
		\end{align}
        for $1 \leq n \leq M-1$.
    \end{lemma}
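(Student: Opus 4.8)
The plan is to first turn the cubic difference into a product of low-order norms by exploiting the pointwise continuity estimate \eqref{vector_norm_inequality_2} of Lemma~\ref{vector_norm_inequality_lem}, then to split the resulting integral with a generalized H\"older inequality and absorb the gradient of $\psi^u_{n,\beta}$ into the dissipation. Writing $\eta^u_{n,\beta} = u_{n,\beta} - \mathcal{S}_h u_{n,\beta}$ (which, by linearity of $\mathcal{S}_h$, is the $\beta$-combination of $\eta^u_{n-1+\ell}$), inequality \eqref{vector_norm_inequality_2} gives the pointwise bound
\begin{align*}
\big| |u_{n,\beta}|^2 u_{n,\beta} - |\mathcal{S}_h u_{n,\beta}|^2 \mathcal{S}_h u_{n,\beta} \big|
\leq C \big( |u_{n,\beta}| + |\mathcal{S}_h u_{n,\beta}| \big)^2 \, |\eta^u_{n,\beta}|,
\end{align*}
so that, after multiplying by $|\psi^u_{n,\beta}|$, integrating over $D$, and applying H\"older's inequality with exponents $(2,4,4)$,
\begin{align*}
\lambda \big| \big( |u_{n,\beta}|^2 u_{n,\beta} - |\mathcal{S}_h u_{n,\beta}|^2 \mathcal{S}_h u_{n,\beta}, \psi^u_{n,\beta} \big) \big|
\leq C \lambda \big( \| u_{n,\beta} \|_{L^4}^2 + \| \mathcal{S}_h u_{n,\beta} \|_{L^4}^2 \big) \| \eta^u_{n,\beta} \|_{L^4} \| \psi^u_{n,\beta} \|_{L^4}.
\end{align*}

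Next I would invoke the two-dimensional embedding $H^1_0(D) \hookrightarrow L^4(D)$ together with Poincar\'e's inequality to replace $\| \psi^u_{n,\beta} \|_{L^4}$ by $C \| \nabla \psi^u_{n,\beta} \|$ and $\| \eta^u_{n,\beta} \|_{L^4}$ by $C \| \nabla \eta^u_{n,\beta} \|$; Young's inequality then peels off $\tfrac{1}{16}\mu \| \nabla \psi^u_{n,\beta} \|^2$ (to be reabsorbed on the left of \eqref{error estimate L2 inequality}) and leaves $\tfrac{C\lambda^2}{\mu} \big( \| u_{n,\beta} \|_{L^4}^2 + \| \mathcal{S}_h u_{n,\beta} \|_{L^4}^2 \big)^2 \| \nabla \eta^u_{n,\beta} \|^2$. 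The Stokes-type projection estimate \eqref{Stokes-type projection} bounds $\| \nabla \eta^u_{n,\beta} \| \leq C(\theta) h^{r+1} \big( \| u \|_{\infty,r+2} + \| w \|_{\infty,r+2} + \| p \|_{\infty,r+1} \big)$ — here one uses that the $\beta$-coefficients of the DLN method sum to one and are uniformly bounded (with a bound depending on $\theta$), so that $\eta^u_{n,\beta}$ is a $\theta$-dependent bounded combination of $\eta^u$ at three consecutive nodes — and this supplies the factor $h^{2r+2}$ together with one power $(\,\cdot\,)^2$ in \eqref{cubic term inequality}.

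Finally I would estimate the remaining $L^4$ norms. Since the $\beta$-weights sum to one, $\| u_{n,\beta} \|_{L^4} \leq C \| u \|_{L^{\infty}(L^{4})}$ (finite under Assumption~\ref{u_p_T_space} via $H^{r+2}(D) \hookrightarrow L^4(D)$), and by the triangle inequality $\| \mathcal{S}_h u_{n,\beta} \|_{L^4} \leq \| u_{n,\beta} \|_{L^4} + \| \eta^u_{n,\beta} \|_{L^4} \leq C \| u \|_{L^{\infty}(L^{4})} + C h^{r+1}\big( \| u \|_{\infty,r+2} + \| w \|_{\infty,r+2} + \| p \|_{\infty,r+1} \big)$; squaring, summing, and squaring again yields precisely the bracket $\big[ \| u \|_{L^{\infty}(L^{4})}^4 + h^{4r+4}\big( \| u \|_{\infty,r+2} + \| w \|_{\infty,r+2} + \| p \|_{\infty,r+1} \big)^4 \big]$ of \eqref{cubic term inequality}. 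Combining the three steps and collecting constants (depending only on $D$, $\theta$, and the embedding constant) gives \eqref{cubic term inequality}. I expect the only delicate point to be the bookkeeping of the $h$-powers: one must deliberately use the crude estimate $\| \eta^u_{n,\beta} \|_{L^4} \leq C \| \nabla \eta^u_{n,\beta} \|$ rather than the sharper Ladyzhenskaya interpolation $\| \eta^u_{n,\beta} \|_{L^4} \leq C \| \eta^u_{n,\beta} \|^{1/2} \| \nabla \eta^u_{n,\beta} \|^{1/2}$, since only the former keeps the exponents of the two groups of terms in \eqref{cubic term inequality} matched; apart from this there is no genuine obstacle.
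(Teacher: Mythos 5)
Your argument is correct and follows essentially the same route as the paper's proof: the pointwise continuity bound \eqref{vector_norm_inequality_2}, a H\"older splitting that yields $C\lambda\big(\|u_{n,\beta}\|_{L^4}^2+\|\mathcal{S}_h u_{n,\beta}\|_{L^4}^2\big)\|\eta^u_{n,\beta}\|_{L^4}\|\psi^u_{n,\beta}\|_{L^4}$, the embedding $H^1_0\hookrightarrow L^4$ with Poincar\'e, Young's inequality to absorb $\tfrac{\mu}{16}\|\nabla\psi^u_{n,\beta}\|^2$, and the projection estimate \eqref{Stokes-type projection} to produce the $h$-powers (the paper merely orders the two H\"older applications differently, which is equivalent). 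Your closing caveat about avoiding the Ladyzhenskaya interpolation is unnecessary: the sharper bound $\|\eta^u_{n,\beta}\|_{L^4}\le C\|\eta^u_{n,\beta}\|^{1/2}\|\nabla\eta^u_{n,\beta}\|^{1/2}$ would only give a higher power of $h$, which still implies the stated estimate for $h<1$.
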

    \begin{proof}
        By \eqref{vector_norm_inequality_2}, H\"older's inequality, Sobolev imbedding inequality, Poincar\'e inequality and triangle inequality 
        \begin{align}\label{cubic term inequality 1}
            &|(|u_{n,\beta}|^2 u_{n,\beta} - |\mathcal{S}_h u_{n,\beta}|^2\mathcal{S}_h u_{n,\beta}, \psi_{n,\beta}^u)| \notag
            \\
            \leq
            & C \Big(
            \int_{D} (|u_{n,\beta}|^2 u_{n,\beta} -|\mathcal{S}_h u_{n,\beta}|^2 \mathcal{S}_h u_{n,\beta})^{\frac{4}{3}}\mathrm{d}x
            \Big)^\frac{3}{4}
            \Big(
            \int_{D}| \psi_{n,\beta}^u|^4 \mathrm{d}x
            \Big)^\frac{1}{4} \notag
            \\
            \leq
            &
            C \bigg[
            \Big(
            \int_{D} (|u_{n,\beta}| + |\mathcal{S}_h u_{n,\beta}| )^4 \mathrm{d}x
            \Big)^\frac{2}{3}
            \Big(
            \int_{D} |u_{n,\beta} - \mathcal{S}_h u_{n,\beta}|^4 \mathrm{d}x
            \Big)^\frac{1}{3}
            \bigg]^\frac{3}{4}
            \|\psi_{n,\beta}^u\|_{L^4}
            \\
            \leq
            & C 
            \Big(
            \int_{D} (|u_{n,\beta}| + |\mathcal{S}_h u_{n,\beta}| )^4 \mathrm{d}x
            \Big)^\frac{1}{2} \|u_{n,\beta}-\mathcal{S}_h u_{n,\beta}\|_{L^4}
            \|\psi_{n,\beta}^u\|_{L^4} \notag
            \\ 
            \leq
            & C (\|u_{n,\beta}\|_{L^4(D)}^2 +\|\mathcal{S}_h u_{n,\beta}\|_{L^{4}(D)}^2 )
            \|\nabla \eta^u_{n,\beta}\| \|\nabla \psi^u_{n,\beta} \| \notag \\
            \leq& C (\|u_{n,\beta}\|_{L^4(D)}^2 + \|\eta^{u}_{n,\beta}\|_{1}^2 ) \|\nabla \eta^u_{n,\beta}\| \|\nabla \psi^u_{n,\beta}\|. \notag 
        \end{align}
        We apply Young's inequality and approximation in \eqref{Stokes-type projection} to \eqref{cubic term inequality 1} and  
        obtain \eqref{cubic term inequality}.
    \end{proof} 

    \begin{lemma}\label{analytical b - exact b with t n beta}
		Under Assumption \ref{u_p_T_space}, the following bound holds 
		\begin{align*}
			&\nu \big(
			b(u_{n,\beta}, u_{n,\beta}, \psi^u_{n,\beta}) - b(u(t_{n,\beta}), u(t_{n,\beta}), \psi^u_{n,\beta})
			\big)
			\\
			\leq&
			\frac{\mu}{32}  \| \nabla \psi^u_{n,\beta} \|^2
            + \frac{C(\theta) \nu^2}{\mu} \|u\|_{\infty,2}^2 k_{\rm{max}}^3 
            \int_{t_{n-1}}^{t_{n+1}} \| \nabla u_{tt}\|^2 \mathrm{d}t
		\end{align*}
        for $1 \leq n \leq M-1$.
	\end{lemma}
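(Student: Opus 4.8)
The plan is to isolate the consistency error $\delta_n := u_{n,\beta} - u(t_{n,\beta})$ and exploit the bilinearity of $b(\cdot,\cdot,\cdot)$ in its first two slots, writing
\[
b(u_{n,\beta}, u_{n,\beta}, \psi^u_{n,\beta}) - b(u(t_{n,\beta}), u(t_{n,\beta}), \psi^u_{n,\beta}) = b(\delta_n, u_{n,\beta}, \psi^u_{n,\beta}) + b(u(t_{n,\beta}), \delta_n, \psi^u_{n,\beta}),
\]
and then estimating the two pieces with \emph{complementary} bounds on the trilinear form so that $\delta_n$ enters at exactly the Sobolev level the consistency Lemma \ref{n beta int tn-1 tn+1} can supply. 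For the first piece I would use \eqref{b 0 2 1}, $b(\delta_n, u_{n,\beta}, \psi^u_{n,\beta}) \le C\|\delta_n\|\,\|u_{n,\beta}\|_2\,\|\psi^u_{n,\beta}\|_1$; since the DLN weights satisfy $\beta_0^{(n)}+\beta_1^{(n)}+\beta_2^{(n)}=1$, the triangle inequality gives $\|u_{n,\beta}\|_2 \le \|u\|_{\infty,2}$, and Poincar\'e's inequality replaces $\|\psi^u_{n,\beta}\|_1$ by $C\|\nabla\psi^u_{n,\beta}\|$. For the second piece I would use \eqref{b 2 1 0}, $b(u(t_{n,\beta}), \delta_n, \psi^u_{n,\beta}) \le C\|u(t_{n,\beta})\|_2\,\|\nabla\delta_n\|\,\|\psi^u_{n,\beta}\|$, bound $\|u(t_{n,\beta})\|_2 \le \|u\|_{\infty,2}$, and again apply Poincar\'e to the test function.

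Applying Young's inequality to each piece with the weight chosen so that $\tfrac{\mu}{64}\|\nabla\psi^u_{n,\beta}\|^2$ is absorbed on the left, the two contributions together give the $\tfrac{\mu}{32}\|\nabla\psi^u_{n,\beta}\|^2$ of the statement and leave $\tfrac{C\nu^2}{\mu}\|u\|_{\infty,2}^2\|\delta_n\|^2$ and $\tfrac{C\nu^2}{\mu}\|u\|_{\infty,2}^2\|\nabla\delta_n\|^2$. It then remains to bound the consistency error: Lemma \ref{n beta int tn-1 tn+1} with $r=0$ gives $\|\delta_n\|^2 \le C(\theta)(k_n+k_{n-1})^3\int_{t_{n-1}}^{t_{n+1}}\|u_{tt}\|^2\,\mathrm{d}t$ and with $r=1$ gives $\|\nabla\delta_n\|^2 \le \|\delta_n\|_1^2 \le C(\theta)(k_n+k_{n-1})^3\int_{t_{n-1}}^{t_{n+1}}\|u_{tt}\|_1^2\,\mathrm{d}t$. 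Since $(k_n+k_{n-1})^3 \le 8k_{\max}^3$ and $u_{tt}=0$ on $\partial D$ (as $u=0$ on $\partial D$ for all $t$), so that $\|u_{tt}\|_1 \le C\|\nabla u_{tt}\|$ by Poincar\'e, both are dominated by $C(\theta)k_{\max}^3\int_{t_{n-1}}^{t_{n+1}}\|\nabla u_{tt}\|^2\,\mathrm{d}t$, which yields the claimed right-hand side. The case $\theta=1$ is handled identically, using the midpoint branch of Lemma \ref{n beta int tn-1 tn+1}, which gives $\|\delta_n\|_r^2 \le Ck_n^3\int_{t_{n-1}}^{t_{n+1}}\|u_{tt}\|_r^2\,\mathrm{d}t \le Ck_{\max}^3\int_{t_{n-1}}^{t_{n+1}}\|u_{tt}\|_r^2\,\mathrm{d}t$.

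The only point requiring care is the bookkeeping of which of the bounds \eqref{b 1 1 1}--\eqref{b 1 2 0} to apply to each piece: one must keep $\delta_n$ at the $H^1$ level (using \eqref{b 2 1 0}) rather than the $H^2$ level (which \eqref{b 1 2 0} would force, overspending the regularity budget of Assumption \ref{u_p_T_space}), while still retaining a full gradient on the test function so that the viscous term $\|\nabla\psi^u_{n,\beta}\|^2$ can be absorbed. Once the pairing \eqref{b 0 2 1}/\eqref{b 2 1 0} is fixed, the rest is routine Cauchy--Schwarz and Young's inequality manipulation together with a direct invocation of the consistency lemma.
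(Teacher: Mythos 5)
Your proof is correct and follows the same skeleton as the paper's: the identical bilinear splitting into $b(\delta_n, u_{n,\beta}, \psi^u_{n,\beta}) + b(u(t_{n,\beta}), \delta_n, \psi^u_{n,\beta})$ with $\delta_n = u_{n,\beta} - u(t_{n,\beta})$, followed by Young's inequality and the consistency Lemma \ref{n beta int tn-1 tn+1}. The only divergence is in the choice of trilinear estimates: you pair \eqref{b 0 2 1} with \eqref{b 2 1 0}, which forces you to invoke the consistency lemma at two Sobolev levels ($\|\delta_n\|$ and $\|\nabla\delta_n\|$) and to justify Poincar\'e for $u_{tt}$ via $u_{tt}|_{\partial D}=0$, whereas the paper applies the all-gradient bound \eqref{b 1 1 1} to both pieces at once, needing only $\|\nabla\delta_n\|$ and bounding $\|\nabla u_{n,\beta}\| + \|\nabla u(t_{n,\beta})\| \le C(\theta)\|u\|_{\infty,2}$ crudely. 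Your closing worry about \eqref{b 1 2 0} ``overspending the regularity budget'' is therefore moot: the cheapest available estimate \eqref{b 1 1 1} requires only $H^1$ on all three slots and sidesteps the pairing question entirely; both routes land on the same right-hand side.
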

    \begin{proof}
		%
		We reformulate the difference between the two nonlinear terms as follows:
		\begin{align}
			\begin{split}\label{b-b n beta inequality}
				&\nu \big(
				b(u_{n,\beta}, u_{n,\beta}, \psi^u_{n,\beta}) - b(u(t_{n,\beta}), u(t_{n,\beta}), \psi^u_{n,\beta})
				\big)
				\\
				=&\nu b \big(u_{n,\beta} - u(t_{n,\beta}), u_{n,\beta}, \psi^u_{n,\beta} \big) 
				+ \nu  b \big(u(t_{n,\beta}), u_{n,\beta} - u(t_{n,\beta}), \psi^u_{n,\beta} \big).
			\end{split}
		\end{align}
		We apply \eqref{b 1 1 1}, Lemma \ref{n beta int tn-1 tn+1} and Young's inequality to \eqref{b-b n beta inequality}
		\begin{align} \label{b-b n beta inequality second}
			\begin{split}
				&\nu \big( b(u_{n,\beta} - u(t_{n,\beta}), u_{n,\beta}, \psi^u_{n,\beta}) \big)
				+ \nu \big( b(u(t_{n,\beta}), u_{n,\beta} - u(t_{n,\beta}), \psi^u_{n,\beta}) \big)
				\\
				\leq&
				C\nu (\| \nabla u_{n,\beta}\| + \| \nabla u(t_{n,\beta})\| )  \| \nabla (u_{n,\beta} - u(t_{n,\beta}))\| \| \nabla \psi^u_{n,\beta}\| 		
				\\ 
				\leq&
				\frac{\mu}{32}  \| \nabla \psi^u_{n,\beta} \|^2
				+ C \nu^2 \mu^{-1}
				\|u\|_{\infty,2}^2
				\|\nabla (u_{n,\beta} - u(t_{n,\beta}))\|^2
				\\
				\leq&
			    \frac{\mu}{32}  \| \nabla \psi^u_{n,\beta} \|^2
				+ C(\theta) \nu^2 \mu^{-1}
				\|u\|_{\infty,2}^2 (k_{n-1}+k_n)^3 
				\int_{t_{n-1}}^{t_{n+1}} \| \nabla u_{tt}\|^2 \mathrm{d}t,
			\end{split}
		\end{align}
		which completes the proof.
	\end{proof}

    \begin{lemma}\label{truncation error estimate}
        Under Assumption \ref{u_p_T_space},
		the truncation term $|\tau(u,w,p,\psi^u_{n,\beta})|$ in \eqref{truncation} has following error estimate
		\begin{align} 
				&|\tau(u,w,p,\psi^u_{n,\beta})| 
				\label{truncation error estimate equation}   \\
				&\leq \frac{\mu}{4} \|\nabla \psi^u_{n,\beta}\|^2
                + \frac{C(\theta)}{\mu} \big( \nu^2 \| u \|_{\infty,2}^2 + \lambda^2 \| u \|_{L^{\infty}(L^{4})}^{4} \big)
                k_{\rm{max}}^3 \int_{t_{n-1}}^{t_{n+1}}\|\nabla u_{tt}\|^2  \mathrm{d}t \notag \\
				&+ \frac{C(\theta) k_{\rm{max}}^3}{\mu} \int_{t_{n-1}}^{t_{n+1}} \!\big(
				\|u_{ttt}\|^2 \!+\! \mu^2 \|\nabla u_{tt}\|^2 \!+\! \rho^2 \|u_{tt}\|^2 \!+\! \gamma^{2} \|\nabla w_{tt}\|^2 \!+\! \| p_{tt}\|^2 \!+\! \|f_{tt}\|^2
				\big)	 \mathrm{d}t.   \notag 
		\end{align}
        for $1 \leq n \leq m-1$,
	\end{lemma}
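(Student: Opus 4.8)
The plan is to estimate separately each of the eight contributions in the definition \eqref{truncation} of $\tau(u,w,p,\psi^u_{n,\beta})$, in every case splitting off a small multiple of $\|\nabla\psi^u_{n,\beta}\|^2$ (to be absorbed into the left side of \eqref{error estimate L2 inequality}) and converting the remaining temporal defect into a short-interval integral of a second- or third-order time derivative via the consistency estimates of Lemma~\ref{n beta int tn-1 tn+1}. Throughout I would use $\|\psi^u_{n,\beta}\|\le C\|\nabla\psi^u_{n,\beta}\|$ (Poincar\'e) and $\|\nabla\cdot v\|\le C\|\nabla v\|$, together with the elementary bound $(k_n+k_{n-1})^3\le 8k_{\rm{max}}^3$.

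For the six ``linear'' terms --- namely the time-discretization defect $(\widehat{k}_n^{-1}u_{n,\alpha}-u_t(t_{n,\beta}),\psi^u_{n,\beta})$, the diffusion defect $\mu(\nabla(u_{n,\beta}-u(t_{n,\beta})),\nabla\psi^u_{n,\beta})$, the reaction defect $\rho(u_{n,\beta}-u(t_{n,\beta}),\psi^u_{n,\beta})$, the coupling defect $\gamma(\nabla(w_{n,\beta}-w(t_{n,\beta})),\nabla\psi^u_{n,\beta})$, the pressure defect $(p_{n,\beta}-p(t_{n,\beta}),\nabla\cdot\psi^u_{n,\beta})$, and the forcing defect $(f_{n,\beta}-f(t_{n,\beta}),\psi^u_{n,\beta})$ --- I would apply the Cauchy--Schwarz inequality followed by Young's inequality, tuned so that each term contributes at most $\tfrac{\mu}{32}\|\nabla\psi^u_{n,\beta}\|^2$, and then invoke Lemma~\ref{n beta int tn-1 tn+1} with $r=0$ for the $L^2$-type defects and with $r=1$ for the $H^1$-type defects, using $\|v\|_1\le C\|\nabla v\|$ together with the fact that $u_{tt}$ and $w_{tt}$ vanish on $\partial D$ (obtained by differentiating the boundary conditions in time). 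This produces exactly the terms $\mu^{-1}C(\theta)k_{\rm{max}}^3\int_{t_{n-1}}^{t_{n+1}}(\|u_{ttt}\|^2+\mu^2\|\nabla u_{tt}\|^2+\rho^2\|u_{tt}\|^2+\gamma^2\|\nabla w_{tt}\|^2+\|p_{tt}\|^2+\|f_{tt}\|^2)\,\mathrm{d}t$ appearing in \eqref{truncation error estimate equation}; note the forcing defect uses only $f\in W^{2,2}([0,T];[L^2(D)]^2)$ from Assumption~\ref{u_p_T_space}.

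The convective defect $\nu(b(u_{n,\beta},u_{n,\beta},\psi^u_{n,\beta})-b(u(t_{n,\beta}),u(t_{n,\beta}),\psi^u_{n,\beta}))$ is precisely the quantity bounded in Lemma~\ref{analytical b - exact b with t n beta}, which contributes $\tfrac{\mu}{32}\|\nabla\psi^u_{n,\beta}\|^2$ plus $C(\theta)\nu^2\mu^{-1}\|u\|_{\infty,2}^2 k_{\rm{max}}^3\int_{t_{n-1}}^{t_{n+1}}\|\nabla u_{tt}\|^2\,\mathrm{d}t$. For the cubic defect $\lambda(|u_{n,\beta}|^2u_{n,\beta}-|u(t_{n,\beta})|^2u(t_{n,\beta}),\psi^u_{n,\beta})$ I would use the continuity estimate \eqref{vector_norm_inequality_2} pointwise, then H\"older's inequality with exponents $(2,4,4)$ to obtain $C\lambda(\|u_{n,\beta}\|_{L^4}^2+\|u(t_{n,\beta})\|_{L^4}^2)\|u_{n,\beta}-u(t_{n,\beta})\|_{L^4}\|\psi^u_{n,\beta}\|_{L^4}$, the two-dimensional Sobolev embedding $\|\cdot\|_{L^4}\le C\|\nabla\cdot\|$, Young's inequality, and finally Lemma~\ref{n beta int tn-1 tn+1} with $r=1$ to replace $\|\nabla(u_{n,\beta}-u(t_{n,\beta}))\|^2$ by $C(\theta)k_{\rm{max}}^3\int_{t_{n-1}}^{t_{n+1}}\|\nabla u_{tt}\|^2\,\mathrm{d}t$; bounding $\|u_{n,\beta}\|_{L^4},\|u(t_{n,\beta})\|_{L^4}\le C\|u\|_{L^\infty(L^4)}$ yields the coefficient $\lambda^2\mu^{-1}\|u\|_{L^\infty(L^4)}^4$, and merging this with the $\nu^2\|u\|_{\infty,2}^2$ contribution of the convective defect gives the middle term of \eqref{truncation error estimate equation}. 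Adding the eight $\|\nabla\psi^u_{n,\beta}\|^2$ pieces gives at most $\tfrac{8\mu}{32}\|\nabla\psi^u_{n,\beta}\|^2=\tfrac{\mu}{4}\|\nabla\psi^u_{n,\beta}\|^2$, the first term on the right of \eqref{truncation error estimate equation}.

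The linear terms are routine (Cauchy--Schwarz/Young plus Lemma~\ref{n beta int tn-1 tn+1}); the one step requiring care is the cubic defect, where the estimate must be routed through the $L^4$ norm and the two-dimensional Sobolev embedding so that $u_{n,\beta}-u(t_{n,\beta})$ is measured in $H^1$ rather than $L^2$ --- only then does the $H^1$-consistency bound of Lemma~\ref{n beta int tn-1 tn+1} supply the $\|\nabla u_{tt}\|^2$ factor appearing in the statement --- and a secondary bookkeeping point is distributing the Young parameters across the eight terms so that their $\|\nabla\psi^u_{n,\beta}\|^2$ contributions sum to exactly $\tfrac{\mu}{4}$.
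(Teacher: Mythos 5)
Your proposal is correct and follows essentially the same route as the paper: the six linear defects are handled by Cauchy--Schwarz/Young plus the consistency Lemma~\ref{n beta int tn-1 tn+1}, the convective defect is exactly Lemma~\ref{analytical b - exact b with t n beta}, the cubic defect is routed through $L^4$ and the two-dimensional Sobolev embedding precisely as in \eqref{cubic term inequality 1}--\eqref{cubic term truncation inequality 2}, and the eight $\tfrac{\mu}{32}\|\nabla\psi^u_{n,\beta}\|^2$ pieces sum to $\tfrac{\mu}{4}\|\nabla\psi^u_{n,\beta}\|^2$. The only cosmetic difference is your $(2,4,4)$ H\"older split versus the paper's $(4/3,4)$ split followed by a second H\"older step, which yield the same bound.
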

    \begin{proof}
        By similar argument to \eqref{cubic term inequality 1}, 
        \begin{align}\label{cubic term truncation inequality 1}
            \begin{split}
                &\lambda|(|u_{n,\beta}|^2 u_{n,\beta} - |u(t_{n,\beta})|^2u(t_{n,\beta}), \psi^u_{n,\beta})|
                \\
                \leq
                & C \lambda \big( \| u_{n,\beta} \|_{L^{4}}^2 + \| u(t_{n,\beta}) \|_{L^{4}}^2  \big) \big\| \nabla \big( u_{n,\beta} - u(t_{n,\beta}) \big) \big\| \| \nabla \psi^u_{n,\beta} \|
            \end{split}
        \end{align}
        We apply Young's inequality to \eqref{cubic term truncation inequality 1}
        \begin{align} 
            &\lambda|(|u_{n,\beta}|^2 u_{n,\beta} - |u(t_{n,\beta})|^2u(t_{n,\beta}), \psi^u_{n,\beta})|
            \label{cubic term truncation inequality 2} \\
            \leq
            &C(\theta) \frac{\lambda^2}{\mu} \big( \| u_{n,\beta} \|_{L^{4}}^4 + \| u(t_{n,\beta}) \|_{L^{4}}^4  \big) 
            (k_{n-1}+k_{n})^3 \int_{t_{n-1}}^{t_{n+1}}\|\nabla u_{tt}\|^2  \mathrm{d}t
            + \frac{\mu}{32} \| \nabla \psi^u_{n,\beta}\|^2. \notag 
        \end{align}
        For the remaining truncation terms in \eqref{truncation}, 
        we utilize Cauchy-Schwarz, Poincar\'e inequality, Young's inequalities, along with Lemma \ref{n beta int tn-1 tn+1}
        \begin{align}
			\Big( \frac{1}{\widehat{k}_n} u_{n,\alpha} - u_t(t_{n,\beta}), \psi_{n,\beta}^u \Big) 
			\leq& C \mu^{-1} \Big\| \frac{1}{\widehat{k}_n} u_{n,\alpha} - u_t(t_{n,\beta}) \Big\|^2 + \frac{\mu}{32} \| \nabla \psi_{n,\beta}^u\|^2
			\label{cubic term truncation inequality 3} \\
			\leq&
			\frac{C(\theta)}{\mu} (k_{n-1}+k_{n})^3 \int_{t_{n-1}}^{t_{n+1}}\|u_{ttt}\|^2  \mathrm{d}t
			+ \frac{\mu}{32} \| \nabla \psi_{n,\beta}^u\|^2,
			\notag \\
			\mu (\nabla (u_{n,\beta} -u(t_{n,\beta})), \nabla \psi_{n,\beta}^u) 
			\leq& 
			C(\theta) \mu (k_{n-1}+k_{n})^3 \int_{t_{n-1}}^{t_{n+1}}\|\nabla u_{tt}\|^2  \mathrm{d}t
			+\frac{\mu}{32}\| \nabla \psi_{n,\beta}^u\|^2,
			\notag \\
			\rho (u_{n,\beta}-u(t_{n,\beta}), \psi_{n,\beta}^u)
			\leq &
			C(\theta) \frac{\rho^2}{\mu} (k_{n-1}+k_{n})^3 \int_{t_{n-1}}^{t_{n+1}}\|u_{tt}\|^2  \mathrm{d}t
			+ \frac{\mu}{32} \| \nabla \psi_{n,\beta}^u\|^2,
			\notag \\
			\gamma(\nabla (w_{n,\beta} - w(t_{n,\beta})), \nabla\psi_{n,\beta}^u)
			\leq & 
			C(\theta) \frac{\gamma^2}{\mu} (k_{n-1}+k_{n})^3 \int_{t_{n-1}}^{t_{n+1}}\|\nabla w_{tt}\|^2  \mathrm{d}t
			+\frac{\mu}{32}  \| \nabla \psi_{n,\beta}^u\|^2,
			\notag \\
			(p_{n,\beta} - p(t_{n,\beta}), \nabla \cdot \psi_{n,\beta}^u)
			\leq & 
			\frac{C(\theta)}{\mu} (k_{n-1}+k_{n})^3 \int_{t_{n-1}}^{t_{n+1}}\| p_{tt}\|^2  \mathrm{d}t
			+\frac{\mu}{32} \| \nabla \psi_{n,\beta}^u\|^2,
			\notag \\
			(f_{n,\beta} - f(t_{n,\beta}),\psi_{n,\beta}^u)
			\leq &  
			\frac{C(\theta)}{\mu} (k_{n-1}+k_{n})^3 \int_{t_{n-1}}^{t_{n+1}}\|f_{tt}\|^2  \mathrm{d}t
			+\frac{\mu}{32} \|\nabla \psi_{n,\beta}^u\|^2. \notag 
		\end{align}
        We combine \eqref{cubic term truncation inequality 2}, \eqref{cubic term truncation inequality 3} and Lemma \ref{analytical b - exact b with t n beta} to have \eqref{truncation error estimate equation}.
    \end{proof}

    Now we prove Theorem \ref{Intermediate conclusion}. 
    \begin{proof}
    We combine Lemma \ref{derivation term} - \ref{truncation error estimate} and use the approximation of Stokes-type projection 
    in \eqref{Stokes-type projection}
    \begin{align*}
			&\frac{1}{4}(1+\theta)\|\psi^u_m\|^2 
			+\frac{ \mu}{2} \sum_{n=1}^{m-1}\widehat{k}_n \| \nabla \psi^u_{n,\beta}\|^2
			+\gamma \sum_{n=1}^{m-1}\widehat{k}_n\| \psi^w_{n,\beta}\|^2 
			\\
			&\leq \! C_{\beta}^{(m-1)} \widehat{k}_{m-1}   \Big( |\rho| +
            \frac{ C^{\ast} \nu^{4}}{\mu^{3}}  \|\nabla u_{m-1,\beta}\|^4 \Big) 
            \big( \|\psi^u_{m}\|^2 + \|\psi^u_{m-1}\|^2 + \|\psi^u_{m-2}\|^2 \big)  
            \\
            &+ \sum_{n=1}^{m-2} \Big( |\rho| \!+\!
            \frac{ C^{\ast} \nu^{4}}{\mu^{3}}  \|\nabla u_{n,\beta}\|^4 \Big)  \widehat{k}_n \|\psi^u_{n,\beta}\|^2 
            \!+\! \frac{C(\theta) h^{2r+4}}{\mu} \big( \| u_t \|_{2,r+2}^2 \!+\! \| w_t \|_{2,r+2}^2 
            \!+\! \| p_t \|_{2,r+1}^2 \big) \\
            &+ \frac{ C(\theta) T h^{2r+4}}{\mu} \big( \rho^2 + \nu^{2} \| u \|_{\infty,2}^{2}  \big) \big( \| u \|_{\infty,r+2} +  \| w \|_{\infty,r+2} + \| p \|_{\infty,r+1} \big)^{2}
            \\
            &+ \frac{C(\theta) \nu^{2} h^{2r+2}}{\mu^{2}}
            \big( \| u \|_{\infty,r+2} + \| w \|_{\infty,r+2} 
            + \| p \|_{\infty,r+1} \big)^{2} \Big( \mu \sum_{n=1}^{m-1} \widehat{k}_{n} \| \nabla u_{n,\beta}^{h} \|^{2} \Big)\\
            &\frac{C(\theta) T \lambda^2 h^{2r+2}}{\mu} \Big[ \| u \|_{L^{\infty}(L^{4})}^{4} 
            + h^{4r+4} \big( \| u \|_{\infty,r+2} 
            + \| w \|_{\infty,r+2} + \| p \|_{\infty,r+1} \big)^{4} \Big] \times \notag \\
            &\times \!\big( \| u \|_{\infty,r+2} 
            \!+\! \| w \|_{\infty,r+2} + \| p \|_{\infty,r+1} \big)^{2} 
            \!+\! \frac{C(\theta)}{\mu} \big( \nu^2 \|u\|_{\infty,2}^2 \!+\! \lambda^2 \| u \|_{L^{\infty}(L^{4})}^{4} \big) k_{\rm{max}}^4 
            \| \nabla u_{tt} \|_{2,0}^{2}
            \\
            & + \frac{C(\theta) k_{\rm{max}}^4}{\mu} \big(\|u_{ttt}\|_{2,0}^2 
            + \mu^2 \|\nabla u_{tt}\|_{2,0}^2 + \rho^2 \|u_{tt}\|_{2,0}^2 
            + \gamma^{2} \|\nabla w_{tt}\|_{2,0}^2 + \| p_{tt}\|_{2,0}^2 
            + \|f_{tt}\|_{2,0}^2 \big)
            \\
			&+ \frac{1}{4}(1+\theta)\|\psi^u_1\|^2 + \frac{1}{4}(1-\theta)\|\psi^u_{0}\|^2, 
		\end{align*}
		where $C_{\beta}^{(n)}>0$ is the same as that in \eqref{time-cond-stab} and $C^{\ast}>0$ is the constant $C$ in \eqref{b 0 1 1 1}.
        We assume that $\|\psi^u_1\|^2$ and $\|\psi^u_0\|^2$ are of order $h^{2r+2}$ and apply \eqref{Stability of DLN inequality},  discrete Gr\"onwall inequality \cite[p.369]{MR1043610} along with time step restriction \eqref{time_condition_eq2} in 
        Assumption \ref{time_condition_2} to the above inequality 
        \begin{align} \label{error estimate psi gamma inequality}
			\|\psi^u_m\|^2 + \sum_{n=1}^{m-1} \widehat{k}_n \|\nabla \psi^u_{n,\beta}\|^2
			+ \sum_{n=1}^{m-1} \widehat{k}_n \| \psi^w_{n,\beta}\|^2 \leq \mathcal{O}(h^{2r+2} + k_{\max}^4).
		\end{align}
        By triangle inequality, approximation of $ \mathcal{S}_h u_n$ 
        in \eqref{Stokes-type projection} and \eqref{error estimate psi gamma inequality}
		\begin{align}\label{bound on the velocity error}
			\begin{split}
				&\max_{0 \leq n \leq M}  \| e_n^u \|^2  
				\leq \max_{0 \leq n \leq M} \|\eta^u_n\|^2 + \max_{0 \leq n \leq M} \|\psi^u_n\|^2 \leq \mathcal{O}(h^{2r+2}+ k_{\max}^4), 
			\end{split}
		\end{align}
		Similarly, we apply triangle inequality, Lemma \ref{n beta int tn-1 tn+1} and approximation of $ \mathcal{S}_h u_n$ 
        in \eqref{Stokes-type projection} and \eqref{error estimate psi gamma inequality}
		\begin{align}\label{nabla u-u t n beta inequality}
			\begin{split}
				&\sum_{n=1}^{M-1} \widehat{k}_n \|\nabla e_{n,\beta}^u \|^2 
                + \sum_{n=1}^{M-1} \widehat{k}_n \| e_{n,\beta}^{w} \|^2 \\
				\leq& \sum_{n=1}^{M-1} \widehat{k}_n \big( \|\nabla \psi^u_{n,\beta}\|^2 
                + \|\nabla \eta^u_{n,\beta}\|^2 \big)
                + \sum_{n=1}^{M-1} \widehat{k}_n \big( \|\nabla \psi^w_{n,\beta}\|^2 
                + \|\nabla \eta^w_{n,\beta}\|^2 \big) \\
				\leq& \mathcal{O}(h^{2r+2} + k_{\max}^4).
			\end{split}
		\end{align}
        We combine \eqref{bound on the velocity error} and \eqref{nabla u-u t n beta inequality} to achieve \eqref{proof of max u-u^h T-T^h error}.
	\end{proof}

	\subsection{Error estimate for velocity in $H^1$-norm} \ 
    To carry out the error estimate for velocity $u$ in $H^1$-norm, we need two more restrictions about uniform mesh diameter and time step size
    \begin{align}
        \label{time-diameter-condition}
        k_{\max}^{3} \leq h, \quad h^{2r+2} \leq k_{\min} = \min_{0 \leq n \leq M-1} \{ k_{n} \},
        \quad \frac{k_{\max}}{k_{\min}} \leq C
    \end{align}
	for some $C >0$. 
	\begin{theorem}\label{Intermediate conclusion 2}
		Suppose the assumption of \ref{u_p_T_space}, \ref{time_condition_2} and 
        conditions \eqref{time-diameter-condition} hold. 
        If the time step size and uniform mesh diameter are sufficiently small such that for all $1 \leq n \leq M-1$
        \begin{align}
            \label{time-condition-H1}
            C(\theta) \Big[ \frac{\nu^2 }{h} \| \nabla e_{n,\beta}^u \|^{2} 
            + \lambda^2 \big( \|u_{n,\beta}\|_{1}^4 + \| \nabla e_{n,\beta} \|_{1}^4 \big) \Big]
            \widehat{k}_{n} \leq \frac{\mu(1+\theta)}{4}, 
        \end{align}
        for some $C(\theta)>0$, then Scheme \ref{fully discrete formulations scheme second} has the following error estimates 
		\begin{equation} \label{proof of max nabla u-u^h T-T^h error}
		\max_{0 \leq n \leq M}  \|e_{n}^{u} \|_1^{2}
		+ \max_{0 \leq n \leq M}  \|e_{n}^{w} \|^{2}
        + \sum_{n=1}^{M-1} \widehat{k}_n \|\widehat{k}_n^{-1} e^u_{n,\alpha}\|^2
		\leq
		\mathcal{O}(h^{2r+2}  + k_{\max}^4).
		\end{equation}
	\end{theorem}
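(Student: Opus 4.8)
The plan is to repeat the energy argument of Theorem~\ref{Intermediate conclusion}, but now testing the error equation \eqref{error equation 1} with the discrete time--derivative combination $v^h=\psi^u_{n,\alpha}$ instead of $\psi^u_{n,\beta}$. Since $\psi^u_j\in V_h$ for every $j$, so is $\psi^u_{n,\alpha}$, and the pressure--error term drops. The first term then gives $\widehat k_n^{-1}\|\psi^u_{n,\alpha}\|^2$, which is the last quantity in \eqref{proof of max nabla u-u^h T-T^h error}. For the viscous term $\mu(\nabla\psi^u_{n,\beta},\nabla\psi^u_{n,\alpha})$ I would apply the $G$--stability identity \eqref{G-stable} of Lemma~\ref{G-stable-lemma} to the sequence $\{\nabla\psi^u_n\}$, which telescopes to $\mu$ times a difference of $G(\theta)$--norms of $(\nabla\psi^u_{n+1},\nabla\psi^u_n)$ plus a nonnegative term; summing in $n$ leaves $\tfrac{\mu}{4}(1+\theta)\|\nabla\psi^u_m\|^2$ on the left. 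The coupling term $\gamma(\nabla\psi^w_{n,\beta},\nabla\psi^u_{n,\alpha})$ is where the divergence--free--preserving structure is essential: because the discrete $w$--equation holds at every time level, $(\psi^w_j,\varphi^h)=(\nabla\psi^u_j,\nabla\varphi^h)$ for all $j$ and $\varphi^h\in V_h$, hence $(\nabla\psi^u_{n,\alpha},\nabla\psi^w_{n,\beta})=(\psi^w_{n,\alpha},\psi^w_{n,\beta})$; applying Lemma~\ref{G-stable-lemma} a \emph{second} time, now to $\{\psi^w_n\}$, telescopes this into $\tfrac{\gamma}{4}(1+\theta)\|\psi^w_m\|^2$ plus nonnegative remainders, which is what ultimately controls $\max_n\|e^w_n\|$.

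On the right--hand side, the projection term $\widehat k_n^{-1}(\eta^u_{n,\alpha},\psi^u_{n,\alpha})$, the linear $\rho$--terms, and the ``benign'' parts of the convective and cubic nonlinearities are handled by Cauchy--Schwarz and Young, peeling off a small multiple of $\widehat k_n^{-1}\|\psi^u_{n,\alpha}\|^2$; since $\eta^u_{n,\alpha}$ and $u_{n,\alpha}$ are discrete time differences, the fundamental theorem of calculus turns them into $\mathcal O(h^{2r+4})$ after summation, exactly as in Lemma~\ref{derivation term}. Many quadratic--in--error pieces, typically $\widehat k_n\,b(e^u_{n,\beta},u_{n,\beta},\psi^u_{n,\alpha})$ and $\widehat k_n(|u_{n,\beta}|^2u_{n,\beta}-|u^h_{n,\beta}|^2u^h_{n,\beta},\psi^u_{n,\alpha})$ treated via the continuity bound \eqref{vector_norm_inequality_2}, are controlled not by absorption but by invoking the already--proven $L^2$ estimate \eqref{proof of max u-u^h T-T^h error}: one Young step turns them into $\widehat k_n\|\nabla e^u_{n,\beta}\|^2$ (or $\widehat k_n\|e^w_{n,\beta}\|^2$) times bounded factors, whose $n$--sum is $\mathcal O(h^{2r+2}+k_{\max}^4)$. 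For the consistency functional $\tau(u,w,p,\psi^u_{n,\alpha})$, the crucial trick is to integrate the $\mu\nabla$-- and $\gamma\nabla$--contributions by parts against $\psi^u_{n,\alpha}\in X_h$ and rewrite them using $-\Delta u=w$ from \eqref{ACEs 2nd}, so that no negative power of $h$ appears; the remaining pieces are bounded by the consistency estimates of Lemma~\ref{n beta int tn-1 tn+1} and produce $\mathcal O(k_{\max}^4)$.

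The hard part is the genuinely dangerous portion of the convective (and analogously the cubic) nonlinearity --- the purely quadratic piece $\nu\,b(\psi^u_{n,\beta},e^u_{n,\beta},\psi^u_{n,\alpha})$ and its relatives. Here $\psi^u_{n,\alpha}$ cannot be kept in $H^1$, since the left side only supplies its $L^2$ norm. The point is to use the trilinear estimate \eqref{b 1 1 0 1}, which puts only $\|\psi^u_{n,\alpha}\|^{1/2}\|\psi^u_{n,\alpha}\|_1^{1/2}$ on the test slot, so that a \emph{single} application of the inverse inequality \eqref{inverse-ineq} costs merely $h^{-1/2}$; after Young what remains is, up to a fixed constant, the bracket in \eqref{time-condition-H1} times $\widehat k_n$ multiplying $\|\nabla\psi^u_{n,\beta}\|^2\le C(\|\nabla\psi^u_{n+1}\|^2+\|\nabla\psi^u_n\|^2+\|\nabla\psi^u_{n-1}\|^2)$, so \eqref{time-condition-H1} is exactly the hypothesis that makes this coefficient $\le\tfrac{\mu}{4}(1+\theta)$ and hence absorbable into the viscous $G$--norm term at level $n+1$ (just as \eqref{time_condition_eq2} was used in Theorem~\ref{Intermediate conclusion}), while the auxiliary relations \eqref{time-diameter-condition} ($k_{\max}^3\le h$, $h^{2r+2}\le k_{\min}$, $k_{\max}/k_{\min}\le C$) collapse the residual powers of $h$ and $k_{\max}$ to the claimed order. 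One then sums over $n$, applies the discrete Gr\"onwall inequality with the summable weights $C\widehat k_n(|\rho|+\nu^2\|u_{n,\beta}\|_2^2+\cdots)$ (finite by Assumption~\ref{u_p_T_space}), and passes from $\psi^u,\psi^w$ to $e^u,e^w$ by the triangle inequality and the projection estimate \eqref{Stokes-type projection}, obtaining \eqref{proof of max nabla u-u^h T-T^h error}. I would also note that, because \eqref{time-condition-H1} involves the unknown $e^u_{n,\beta}$, the argument is really closed by induction on $n$: assuming \eqref{proof of max nabla u-u^h T-T^h error} through step $n$ makes $\|\nabla e^u_{n,\beta}\|$ of the claimed order, which legitimizes \eqref{time-condition-H1} and yields the estimate at step $n+1$.
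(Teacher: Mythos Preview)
Your proposal is correct and follows essentially the same route as the paper: test \eqref{error equation 1} with $\widehat k_n^{-1}\psi^u_{n,\alpha}$, use the discrete $w$--equation to rewrite $\gamma(\nabla\psi^w_{n,\beta},\nabla\psi^u_{n,\alpha})=\gamma(\psi^w_{n,\alpha},\psi^w_{n,\beta})$, apply the $G$--stability identity to both $\{\nabla\psi^u_n\}$ and $\{\psi^w_n\}$, handle the dangerous trilinear pieces via \eqref{b 1 1 0 1} plus the inverse inequality, integrate the $\mu\nabla$ and $\gamma\nabla$ truncation terms by parts, and close with discrete Gr\"onwall under \eqref{time-condition-H1}. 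The only minor difference is that the paper verifies \eqref{time-condition-H1} a priori (in the Remark) from the already--proven $L^2$ estimate \eqref{proof of max u-u^h T-T^h error} and the step--ratio conditions \eqref{time-diameter-condition}, rather than by induction on $n$ as you suggest; both justifications work.
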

    \begin{remark}
        With the help of error estimates in \eqref{proof of max u-u^h T-T^h error} and conditions in \eqref{time-condition-H1}, the restriction in \eqref{time-condition-H1} is available since 
        \begin{align*}
            \frac{\widehat{k}_n}{h} \| \nabla e_{n,\beta}^u \|^{2} 
            = \mathcal{O} (h^{2r+1} + k_{\max}), \quad 
            \widehat{k}_{n} \| \nabla e_{n,\beta}^u \|^{4} 
            \leq \frac{\big(\widehat{k}_{n} \| \nabla e_{n,\beta}^u \|^{2} \big)^{2}}{k_{\min}}
            = \mathcal{O} (k_{\max}).
        \end{align*}
    \end{remark}
	\begin{proof}
		By similar argument to \eqref{error equation 1}-\eqref{error equation 3}, we derive 
		\begin{align}
		\begin{split}\label{error equation 1 n alpha}
		&\frac{1}{\widehat{k}_n}(\psi^u_{n,\alpha}, v^h) 
		+ \mu (\nabla \psi^u_{n,\beta}, \nabla v^h) 
		+\gamma (\nabla \psi^w_{n,\beta},\nabla v^h)
		- (\psi^p_{n,\beta}, \nabla \cdot v^h)
		\\
		&=  \frac{1}{\widehat{k}_n}(\eta^u_{n,\alpha}, v^h) 
		+\nu b(u_{n,\beta}, u_{n,\beta}, v^h)
		-\nu b(u_{n,\beta}^h, u_{n,\beta}^h, v^h)
		- \rho (e^u_{n,\beta},v^h) 
		\\
		& \quad + \lambda (|u_{n,\beta}|^2 u_{n,\beta}  - |u_{n,\beta}^h|^2 u_{n,\beta}^h ,v^h)
		- \tau(u,w,p, v^h),
		\end{split}
		\\
		&(\frac{1}{\widehat{k}_n}\psi^w_{n,\alpha}, \varphi^h) - (\frac{1}{\widehat{k}_n} \nabla \psi^u_{n,\alpha}, \nabla \varphi^h)
		=0, \label{error equation 2 n alpha}
		\\
		&(\frac{1}{\widehat{k}_n} \nabla \cdot \psi^u_{n,\alpha} , q^h)=0.
		\label{error equation 3 n alpha}
		\end{align}
		We set \( v^h = \widehat{k}_n^{-1} \psi^u_{n,\alpha} \) in \eqref{error equation 1 n alpha}, \( \varphi^h = \psi^w_{n,\beta} \) in \eqref{error equation 2 n alpha}, 
		and  \( q^h = \psi^p_{n,\beta} \) in \eqref{error equation 3 n alpha} to derive 
		\begin{align}
			& \| \widehat{k}_n^{-1} \psi^u_{n,\alpha} \|^{2}   
			+ \frac{\mu}{ \widehat{k}_n } (\nabla \psi^u_{n,\beta}, \nabla \psi^u_{n,\alpha}) 
			+ \frac{\gamma}{\widehat{k}_n} (\psi^w_{n,\alpha}, \psi^w_{n,\beta})
			\label{error equation 1 n alpha-eq2} \\
			=& (\widehat{k}_n^{-1} \eta^u_{n,\alpha}, \widehat{k}_n^{-1} \psi^u_{n,\alpha}) 
			+\nu \big( b(u_{n,\beta}, u_{n,\beta}, \widehat{k}_n^{-1} \psi^u_{n,\alpha})
			- b(u_{n,\beta}^h, u_{n,\beta}^h, \widehat{k}_n^{-1} \psi^u_{n,\alpha}) \big) \notag \\
			&- \rho (e^u_{n,\beta},\widehat{k}_n^{-1} \psi^u_{n,\alpha}) 
			+ \lambda (|u_{n,\beta}|^2 u_{n,\beta}  - |u_{n,\beta}^h|^2 u_{n,\beta}^h ,\widehat{k}_n^{-1} \psi^u_{n,\alpha})
			- \tau(u,w,p, \widehat{k}_n^{-1} \psi^u_{n,\alpha}), \notag 
		\end{align}
		We multiply \eqref{error equation 1 n alpha-eq2} by $\widehat{k}_n$, sum the resulting equality over $n$ from $1$ to $m-1$ and utilize the identity in \eqref{G-stable}
		\begin{align}\label{error estimate nabla L2 inequality}
		&\frac{\mu}{4} (1+\theta) \|\nabla \psi^u_m\|^2 
		+ \frac{\mu}{4} (1-\theta)  \|\nabla \psi^u_{m-1}\|^2 
		+ \sum_{n=1}^{m-1} \widehat{k}_n \|\widehat{k}_n^{-1} \psi^u_{n,\alpha}\|^2
		\notag
		\\
		&+\frac{\gamma}{4} (1+\theta) \| \psi^w_m\|^2 + \frac{\gamma}{4} (1-\theta)  \| \psi^w_{m-1}\|^2 
		\notag
		\\
		=&			
		\sum_{n=1}^{m-1}\widehat{k}_n (\widehat{k}_n^{-1} \eta^u_{n,\alpha},
		\widehat{k}_n^{-1} \psi^u_{n,\alpha})
		+\sum_{n=1}^{m-1}	\widehat{k}_n \rho (e^u_{n,\beta}, \widehat{k}_n^{-1} \psi^u_{n,\alpha})
		\notag
		\\
		&+\sum_{n=1}^{m-1} \widehat{k}_n  \nu
		\left(
		b(u_{n,\beta}, u_{n,\beta},\widehat{k}_n^{-1} \psi^u_{n,\alpha})
		-b(u_{n,\beta}^h, u_{n,\beta}^h, \widehat{k}_n^{-1} \psi^u_{n,\alpha}) 
		\right) 
		\\
		&
		+ \sum_{n=1}^{m-1}	\widehat{k}_n \lambda (|u_{n,\beta}|^2 u_{n,\beta} - |u^h_{n,\beta}|^2u^h_{n,\beta}, \widehat{k}_n^{-1} \psi^u_{n,\alpha})
		-\sum_{n=1}^{m-1}\widehat{k}_n \tau(u,w,p, \widehat{k}_n^{-1} \psi^u_{n,\alpha})
		\notag
		\\
		&+ \frac{\mu}{4}(1+\theta)\|\nabla \psi^u_1\|^2 + \frac{\mu}{4}(1-\theta)\|\nabla \psi^u_{0}\|^2 
		+ \frac{\gamma}{4}(1+\theta)\|\psi^w_1\|^2 + \frac{\gamma}{4}(1-\theta)\|\psi^w_{0}\|^2. \notag 
		\end{align} 
        By similar argument to \eqref{derivation-term-eq1}-\eqref{derivation-term-eq3}
		\begin{align}
				&(\widehat{k}_n^{-1} \eta^u_{n,\alpha},
				\widehat{k}_n^{-1} \psi^u_{n,\alpha})
				\label{error-H1-term1} \\
				\leq&
				C(\theta) \frac{h^{2r+4}}{\widehat{k}_n} \int_{t_{n-1}}^{t_{n+1}} \big( \| u_t \|_{r+2}^2 + \| w_t \|_{r+2}^2 + \| p_t \|_{r+1}^2\big) \, \mathrm{d}t
				+ \frac{1}{16} \| \widehat{k}_n^{-1} \psi^u_{n,\alpha}\|^2. \notag 
		\end{align}
		By Cauchy-Schwarz inequality and Poincar\'e inequality 
		\begin{align}
			(e^u_{n,\beta}, \widehat{k}_n^{-1} \psi^u_{n,\alpha})
			\leq  C \| \nabla e_{n,\beta}^u \|^{2} + 	\frac{1}{16} \| \widehat{k}_n^{-1} \psi^u_{n,\alpha}\|^2.
            \label{error-H1-term2}
		\end{align}
        By algebraic calculation, we have
		\begin{align}
				&\nu b(u_{n,\beta}, u_{n,\beta},\widehat{k}_n^{-1} \psi^u_{n,\alpha})
				-\nu b(u_{n,\beta}^h, u_{n,\beta}^h, \widehat{k}_n^{-1} \psi^u_{n,\alpha}) 
				\label{b-terms-H1} \\
				=& - \nu b(e_{n,\beta}^u, u_{n,\beta}, \widehat{k}_n^{-1} \psi^u_{n,\alpha})
				- \nu b(u_{n,\beta}^h, e_{n,\beta}^u, \widehat{k}_n^{-1} \psi^u_{n,\alpha})
				\notag \\
                =& - \nu b(e_{n,\beta}^u, u_{n,\beta}, \widehat{k}_n^{-1} \psi^u_{n,\alpha})
                - \nu b(\psi_{n,\beta}^u, e_{n,\beta}^u, \widehat{k}_n^{-1} \psi^u_{n,\alpha})
				+ \nu b(\eta_{n,\beta}^u, e_{n,\beta}^u, \widehat{k}_n^{-1} \psi^u_{n,\alpha}) \notag \\
                &- \nu b(u_{n,\beta}, e_{n,\beta}^u, \widehat{k}_n^{-1} \psi^u_{n,\alpha}). \notag 
		\end{align}  
        By \eqref{b 2 1 0}, \eqref{b 1 2 0}, Poincar\'e inequality and Young's inequality 
        \begin{align}
            &- \nu b(e_{n,\beta}^u, u_{n,\beta}, \widehat{k}_n^{-1} \psi^u_{n,\alpha})
            - \nu b(u_{n,\beta}, e_{n,\beta}^u, \widehat{k}_n^{-1} \psi^u_{n,\alpha}) 
            \label{b-terms-H1-eq1} \\
            &\leq C \nu^{2} \| \nabla e_{n,\beta}^u \|^{2} \| u_{n,\beta} \|_{2}^{2}
            + \frac{1}{32} \| \widehat{k}_n^{-1} \psi^u_{n,\alpha}\|^2. \notag 
		\end{align}
		By \eqref{b 1 1 0 1}, inverse inequality in \eqref{inverse-ineq}, Poincar\'e inequality and Young's inequality 
        \begin{align}
            &- \nu b(\psi_{n,\beta}^u, e_{n,\beta}^u, \widehat{k}_n^{-1} \psi^u_{n,\alpha})
			+ \nu b(\eta_{n,\beta}^u, e_{n,\beta}^u, \widehat{k}_n^{-1} \psi^u_{n,\alpha})  
            \label{b-terms-H1-eq2} \\
            \leq& C \nu h^{-1/2}\big( \| \nabla \psi_{n,\beta}^u \| + \| \eta_{n,\beta}^u \|_{1} )
            \| \nabla e_{n,\beta}^u \| 
            \| \widehat{k}_n^{-1} \psi^u_{n,\alpha} \| \notag \\
            \leq& C \nu^{2} h^{-1}\big( \| \nabla \psi_{n,\beta}^u \|^{2} + \| \eta_{n,\beta}^u \|_{1}^{2} ) \| \nabla e_{n,\beta}^u \|^{2} 
            + \frac{1}{32} \| \widehat{k}_n^{-1} \psi^u_{n,\alpha}\|^2. \notag 
        \end{align}
		We combine \eqref{b-terms-H1} - \eqref{b-terms-H1-eq2} and use approximation 
        in \eqref{Stokes-type projection} to obtain
        \begin{align}
            &\nu b(u_{n,\beta}, u_{n,\beta},\widehat{k}_n^{-1} \psi^u_{n,\alpha})
			-\nu b(u_{n,\beta}^h, u_{n,\beta}^h, \widehat{k}_n^{-1} \psi^u_{n,\alpha}) 
			\label{b-terms-H1-conclusion} \\
            \leq& C \nu^{2} \| \nabla e_{n,\beta}^u \|^{2} \| u_{n,\beta} \|_{2}^{2}
            + \frac{C \nu^2}{h} \big( \| \nabla \psi_{n,\beta}^u \|^{2} + \| \eta_{n,\beta}^u \|_{1}^{2} ) \| \nabla e_{n,\beta}^u \|^{2} + \frac{1}{16} \| \widehat{k}_n^{-1} \psi^u_{n,\alpha}\|^2 \notag  \\
            \leq& C \nu^2 h^{-1} \| \nabla e_{n,\beta}^u \|^{2} \| \nabla \psi_{n,\beta}^u \|^{2} 
            + \frac{1}{16} \| \widehat{k}_n^{-1} \psi^u_{n,\alpha}\|^2 \notag  \\
            &+ C \nu^2 \Big[ \| u \|_{\infty,2}^{2} + h^{2r+1} \big( \| u \|_{\infty,r+2}^{2} 
            + \| w \|_{\infty,r+2}^{2} + \| p \|_{\infty,r+1}^{2} \big) \Big] \| \nabla e_{n,\beta}^u \|^{2}
            \notag 
		\end{align}
		By Cauchy-Schwarz inequality, continuity property in \eqref{vector_norm_inequality_2},
        Sobolev embedding inequality, Poincar\'e inequality and approximation 
        in \eqref{Stokes-type projection}
        \begin{align} 
            &\lambda |(|u_{n,\beta}|^2 u_{n,\beta} - | u_{n,\beta}^h|^2 u_{n,\beta}^h, \widehat{k}_n^{-1} \psi^u_{n,\alpha})| \notag \\
            \leq
            & 
            \lambda \Big(
            \int_{D} (|u_{n,\beta}|^2 u_{n,\beta} -|u_{n,\beta}^h|^2 u_{n,\beta}^h)^{2}\mathrm{d}x
            \Big)^\frac{1}{2} \| \widehat{k}_n^{-1} \psi^u_{n,\alpha} \|  \notag
            \\
            \leq& 
            C \lambda \Big( \int_{D} (|u_{n,\beta}| + |u_{n,\beta}^h| )^4 |u_{n,\beta} - u_{n,\beta}^h|^2 \mathrm{d}x
            \Big)^\frac{1}{2} \| \widehat{k}_n^{-1} \psi^u_{n,\alpha} \|  \notag \\
            \leq
            &
            C \lambda \Big(
            \int_{D} (|u_{n,\beta}| + |u_{n,\beta}^h| )^8 \mathrm{d}x
            \Big)^\frac{1}{4}
            \Big(
            \int_{D} |u_{n,\beta} - u_{n,\beta}^h|^4 \mathrm{d}x
            \Big)^\frac{1}{4} 
            \| \widehat{k}_n^{-1} \psi^u_{n,\alpha} \|
            \label{cubic-term-H1}    \\
            \leq
            & C \lambda 
            \Big( \| u_{n,\beta} \|_{L^8(D)}^2 + \| e_{n,\beta}^u \|_{L^8(D)}^2 \Big) 
            \|\nabla e_{n,\beta}^u \| \| \widehat{k}_n^{-1} \psi^u_{n,\alpha} \| \notag
            \\ 
            \leq
            & C \lambda (\|u_{n,\beta}\|_{1}^2 + \| \nabla e_{n,\beta} \|_{1}^2 )
            \big( \|\nabla \psi_{n,\beta}^{u} \| + \|\nabla \eta_{n,\beta}^{u} \| \big)
            \| \widehat{k}_n^{-1} \psi^u_{n,\alpha} \| \notag \\
            \leq& C \lambda^2 (\|u_{n,\beta}\|_{1}^4 + \| \nabla e_{n,\beta} \|_{1}^4 )
            \big( \|\nabla \psi_{n,\beta}^{u} \|^{2} + \|\nabla \eta_{n,\beta}^{u} \|^{2} \big)
            + \frac{1}{16} \| \widehat{k}_n^{-1} \psi^u_{n,\alpha}\|^2. \notag \\
            \leq& C \lambda^2 (\|u_{n,\beta}\|_{1}^4 + \| \nabla e_{n,\beta} \|_{1}^4 ) 
            \|\nabla \psi_{n,\beta}^{u} \|^{2} + \frac{1}{16} \| \widehat{k}_n^{-1} \psi^u_{n,\alpha}\|^2 \notag \\
            &+ C \lambda^2 h^{2r+2} (\|u_{n,\beta}\|_{1}^4 + \| \nabla e_{n,\beta} \|_{1}^4 ) 
            \big( \| u \|_{\infty,r+2}^{2} 
            + \| w \|_{\infty,r+2}^{2} + \| p \|_{\infty,r+1}^{2} \big)
            \notag 
        \end{align}
        Now we address terms in $\tau(u,w,p,\widehat{k}_n^{-1} \psi^u_{n,\alpha})$. 
        By similar argument to \eqref{cubic-term-H1}
        \begin{align} \label{tau-H1-eq1} 
            \begin{split}
            &\lambda \big| \big( |u_{n,\beta}|^2 u_{n,\beta} - | u(t_{n,\beta}) |^2 u(t_{n,\beta}), \widehat{k}_n^{-1} \psi^u_{n,\alpha} \big) \big|  \\
            \leq& C \lambda^2 (\|u_{n,\beta}\|_{1}^4 + \| u(t_{n,\beta}) \|_{1}^4 )
            \| u_{n,\beta} - u(t_{n,\beta}) \|_{1}^{2}
            + \frac{1}{32} \| \widehat{k}_n^{-1} \psi^u_{n,\alpha}\|^2 \\
            \leq& C(\theta) \lambda^2 \| u \|_{\infty,1}^{4} (k_{n} + k_{n-1})^{3} 
            \int_{t_{n-1}}^{t_{n+1}} \| u_{tt} \|_{1}^{2} dt 
            + \frac{1}{32} \| \widehat{k}_n^{-1} \psi^u_{n,\alpha}\|^2.
            \end{split}
        \end{align}
        By \eqref{b 1 2 0}, \eqref{b 2 1 0}, Young's inequality and Lemma \ref{n beta int tn-1 tn+1}
        \begin{align} \label{tau-H1-eq2}
			\begin{split}
				&\nu \big(b(u_{n,\beta}, u_{n,\beta}, \widehat{k}_n^{-1} \psi^u_{n,\alpha})
                - b(u(t_{n,\beta}), u(t_{n,\beta}), \widehat{k}_n^{-1} \psi^u_{n,\alpha}) \big)
				\\
				\leq&
				C\nu (\| u_{n,\beta} \|_{2} + \| u(t_{n,\beta}) \|_{2} )  \| u_{n,\beta} - u(t_{n,\beta})\|_{1} \| \widehat{k}_n^{-1} \psi^u_{n,\alpha} \| 		
				\\ 
				\leq&
                C(\theta) \nu^{2} \| u \|_{\infty,2}^{2} (k_{n} + k_{n-1})^{3}  
                \int_{t_{n-1}}^{t_{n+1}} \| u_{tt} \|_{1}^{2} dt
                + \frac{1}{32} \| \widehat{k}_n^{-1} \psi^u_{n,\alpha}\|^2. 
			\end{split}
		\end{align}
        For the remaining truncation terms in \eqref{truncation}, 
        we utilize Cauchy-Schwarz, inequality, Gauss divergence theorem, Poincar\'e inequality, Young's inequalities, along with Lemma \ref{n beta int tn-1 tn+1}
        \begin{align}
			&\Big( \frac{u_{n,\alpha}}{\widehat{k}_n} - u_t(t_{n,\beta}), \widehat{k}_n^{-1} \psi^u_{n,\alpha} \Big) 
			\leq
			C(\theta) k_{\max}^3 \int_{t_{n-1}}^{t_{n+1}} \|u_{ttt}\|^2  \mathrm{d}t
			+ \frac{1}{32} \| \widehat{k}_n^{-1} \psi^u_{n,\alpha} \|^2,
			\label{tau-H1-eq3}  \\
			&\mu (\nabla (u_{n,\beta} -u(t_{n,\beta})), \nabla \widehat{k}_n^{-1} \psi^u_{n,\alpha} ) 
			\leq 
			C(\theta) \mu^{2} k_{\max}^3 \int_{t_{n-1}}^{t_{n+1}}\|\Delta u_{tt}\|^2  \mathrm{d}t
			+\frac{1}{32}\| \widehat{k}_n^{-1} \psi^u_{n,\alpha} \|^2,
			\notag \\
			&\rho (u_{n,\beta}-u(t_{n,\beta}), \widehat{k}_n^{-1} \psi^u_{n,\alpha})
			\leq 
			C(\theta) \rho^2 k_{\max}^3 \int_{t_{n-1}}^{t_{n+1}}\|u_{tt}\|^2  \mathrm{d}t
			+ \frac{1}{32} \| \widehat{k}_n^{-1} \psi^u_{n,\alpha}\|^2,
			\notag \\
			&\gamma(\nabla (w_{n,\beta} - w(t_{n,\beta})), \nabla \widehat{k}_n^{-1} \psi^u_{n,\alpha})
			\leq  
			C(\theta) \gamma^2 k_{\max}^3 \int_{t_{n-1}}^{t_{n+1}}\|\Delta w_{tt}\|^2  \mathrm{d}t
			+\frac{1}{32}  \| \widehat{k}_n^{-1} \psi^u_{n,\alpha} \|^2,
			\notag \\
			&(p_{n,\beta} - p(t_{n,\beta}), \nabla \cdot \widehat{k}_n^{-1} \psi^u_{n,\alpha})
			\leq  
			C(\theta) k_{\max}^3 \int_{t_{n-1}}^{t_{n+1}}\|\nabla p_{tt}\|^2  \mathrm{d}t
			+\frac{1}{32} \|\widehat{k}_n^{-1} \psi^u_{n,\alpha} \|^2,
			\notag \\
			&(f_{n,\beta} - f(t_{n,\beta}),\widehat{k}_n^{-1} \psi^u_{n,\alpha})
			\leq   
			C(\theta) k_{\max}^3 \int_{t_{n-1}}^{t_{n+1}}\|f_{tt}\|^2  \mathrm{d}t
			+\frac{1}{32} \|\widehat{k}_n^{-1} \psi^u_{n,\alpha}\|^2. \notag 
		\end{align}
        We combine \eqref{tau-H1-eq1} - \eqref{tau-H1-eq3} to have 
		\begin{align} 
			&\sum_{n=1}^{m-1} \widehat{k}_n  |\tau(u,w,p,\widehat{k}_n^{-1} \psi^u_{n,\alpha})|
			\label{tau-H1-conclusion} \\
			&\leq C(\theta) \big( \lambda^2 \| u \|_{\infty,1}^{4} + \nu^{2} \| u \|_{\infty,2}^{2} \big) k_{\max}^4 \| u_{tt} \|_{2,1}^{2}
            + \frac{1}{4} \|\widehat{k}_n^{-1} \psi^u_{n,\alpha}\|^2 \notag \\
            &+ \!C(\theta) k_{\max}^4 \!\! \int_{t_{n-1}}^{t_{n+1}} \!\!\big( \|u_{ttt}\|^2 
            + \mu^{2} \|\Delta u_{tt}\|^2 + \rho^2 \|u_{tt}\|^2 + \gamma^2 \|\Delta w_{tt}\|^2
            + \|\nabla p_{tt}\|^2 + \|f_{tt}\|^2 \big) \mathrm{d}t. \notag 
		\end{align}
        By \eqref{error-H1-term1}, \eqref{error-H1-term2}, \eqref{b-terms-H1-conclusion}, 
        \eqref{cubic-term-H1} and \eqref{tau-H1-conclusion}, 
        \eqref{error estimate nabla L2 inequality} becomes 
        \begin{align}
            &\frac{\mu}{4} (1+\theta) \|\nabla \psi^u_m\|^2 
            +\frac{\gamma}{4} (1+\theta) \| \psi^w_m\|^2
            + \sum_{n=1}^{m-1} \frac{\widehat{k}_n}{2} \|\widehat{k}_n^{-1} \psi^u_{n,\alpha}\|^2
            \label{error estimate H1 conclusion} \\
            &= C(\theta) \Big[ \frac{\nu^2 }{h} \| \nabla e_{m-1,\beta}^u \|^{2} 
            + \lambda^2 \big( \|u_{m-1,\beta}\|_{1}^4 + \| \nabla e_{m-1,\beta} \|_{1}^4 \big) \Big]
            \widehat{k}_{m-1} \| \nabla \psi_{m}^u \|^2  \notag \\
            &+ C(\theta) \Big[ \frac{\nu^2}{h} \| \nabla e_{m-1,\beta}^u \|^{2} 
            \!+\! \lambda^2 \big( \|u_{m-1,\beta}\|_{1}^4 + \| \nabla e_{m-1,\beta} \|_{1}^4 \big) \Big] \widehat{k}_{m-1}
            \big( \| \nabla \psi_{m-1}^u \|^2 \!+\! \| \nabla \psi_{m-2}^u \|^2 \big) \notag \\
            &+C \sum_{n=1}^{m-2} \Big[ \frac{\nu^2 }{h} \| \nabla e_{n,\beta}^u \|^{2} 
            + \lambda^2 \big( \|u_{n,\beta}\|_{1}^4 + \| \nabla e_{n,\beta} \|_{1}^4 \big) \Big]
            \widehat{k}_{n} \| \nabla \psi_{n,\beta}^u \|^2		
            + C \sum_{n=1}^{m-1} \widehat{k}_{n} \|e_{n,\beta}^{u} \|^{2}  \notag \\
            &+ C \nu^2 \Big[ \| u \|_{\infty,2}^{2} + h^{2r+1} \big( \| u \|_{\infty,r+2}^{2} 
            + \| w \|_{\infty,r+2}^{2} + \| p \|_{\infty,r+1}^{2} \big) \Big] 
            \sum_{n=1}^{m-1} \widehat{k}_{n} \|e_{n,\beta}^{u} \|^{2} \notag \\
            &+ C \lambda^2 h^{2r+2} \big( \| u \|_{\infty,r+2}^{2} + \| w \|_{\infty,r+2}^{2} + \| p \|_{\infty,r+1}^{2} \big) 
            \sum_{n=1}^{m-1} \widehat{k}_{n} (\|u_{n,\beta}\|_{1}^4 
            + \| \nabla e_{n,\beta} \|_{1}^4 )  \notag  \\
            &+\!C(\theta) h^{2r+4} \big(\! \| u_t \|_{2,r\!+\!2}^2 \!+\! \| w_t \|_{2,r\!+\!2}^2 
            \!+\! \| p_t \|_{2,r\!+\!1}^2 \! \big)
            \!+\! C(\theta) \big(\! \lambda^2 \| u \|_{\infty,1}^{4} \!+\! \nu^{2} \| u \|_{\infty,2}^{2} \! \big) k_{\max}^4 \| u_{tt} \|_{2,1}^{2} \notag \\
            &+ C(\theta) k_{\max}^4 \big( \|u_{ttt}\|_{2,0}^2 + \mu^{2} \|\Delta u_{tt}\|_{2,0}^2 + \rho^2 \|u_{tt}\|_{2,0}^2 + \gamma^2 \|\Delta w_{tt}\|_{2,0}^2 
            + \|f_{tt}\|_{2,0}^2 \big). \notag 
        \end{align} 
        We make use of discrete Gr\"onwall inequality \cite[p.369]{MR1043610} along with restrictions in \eqref{time-condition-H1} to obtain
        \begin{align}
            \label{psi H1 conclusion}
            &\|\nabla \psi^u_m\|^2 
            +\| \psi^w_m\|^2
            + \sum_{n=1}^{m-1} \frac{\widehat{k}_n}{2} \|\widehat{k}_n^{-1} \psi^u_{n,\alpha}\|^2
            \leq \mathcal{O} (h^{2r+2}+ k_{\max}^{4}),
        \end{align}
		which implies \eqref{proof of max nabla u-u^h T-T^h error} by triangle inequality and approximation in \eqref{Stokes-type projection}.
	\end{proof}

	\subsection{Error estimate for pressure} \

    \begin{lemma} \label{Intermediate conclusion 3}
		Suppose the assumption of \ref{u_p_T_space}, \ref{time_condition_2} and 
        conditions \eqref{time-diameter-condition} hold. 
        If the time step size and uniform mesh diameter are sufficiently small such that 
        the restriction in \eqref{time-condition-H1} for all $1 \leq n \leq M-1$, 
        then Scheme \ref{fully discrete formulations scheme second} has the following error estimates 
		\begin{align}
            \label{lemma-p-conclusion}
            &\sum_{n=1}^{m-1}  \mu \widehat{k}_n \|\psi^w_{n,\beta}\|^2 
            + \frac{\gamma}{2} \sum_{n=1}^{m-1} \widehat{k}_n \|\nabla \psi^w_{n,\beta}\|^2 
            \leq \mathcal{O} (h^{2r+2} + k_{\max}^{4}).
        \end{align}
	\end{lemma}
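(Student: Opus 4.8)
The plan is to test the velocity error equation \eqref{error equation 1} with $v^h = \psi^w_{n,\beta}$. This is admissible because in Scheme \ref{fully discrete formulations scheme second} both $w_n^h$ and the Stokes-type projection $\mathcal{S}_h w_n$ lie in $V_h$, so $\psi^w_{n,\beta}\in V_h\subset X_h$; moreover, since $\psi^w_{n,\beta}$ is discretely divergence-free and $\psi^p_{n,\beta}\in Q_h$, the pressure term $(\psi^p_{n,\beta},\nabla\cdot\psi^w_{n,\beta})$ vanishes. I would then test \eqref{error equation 2} with $\varphi^h=\psi^w_{n,\beta}\in V_h$ to obtain the identity $(\nabla\psi^u_{n,\beta},\nabla\psi^w_{n,\beta})=\|\psi^w_{n,\beta}\|^2$. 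Substituting this into the tested equation \eqref{error equation 1} and multiplying by $\widehat{k}_n$ gives
\begin{align*}
\mu\widehat{k}_n\|\psi^w_{n,\beta}\|^2+\gamma\widehat{k}_n\|\nabla\psi^w_{n,\beta}\|^2
=-(\psi^u_{n,\alpha},\psi^w_{n,\beta})-\rho\widehat{k}_n(\psi^u_{n,\beta},\psi^w_{n,\beta})+\widehat{k}_n R_n,
\end{align*}
where $R_n$ is the right-hand side of \eqref{error equation 1} evaluated at $v^h=\psi^w_{n,\beta}$. The task thus reduces to bounding the right-hand side by $\tfrac{\gamma}{2}\widehat{k}_n\|\nabla\psi^w_{n,\beta}\|^2$ plus quantities whose sum over $n$ is $\mathcal{O}(h^{2r+2}+k_{\max}^4)$, after which we absorb the $\tfrac{\gamma}{2}$-term into the left side and sum over $n=1,\dots,m-1$.

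The individual bounds follow the template of Lemmas \ref{derivation term}--\ref{truncation error estimate}, now with $\psi^w_{n,\beta}$ in the test slot, using the Poincar\'e inequality $\|\psi^w_{n,\beta}\|\le C\|\nabla\psi^w_{n,\beta}\|$, Young's inequality (absorbing into $\tfrac{\gamma}{2}\|\nabla\psi^w_{n,\beta}\|^2$), the Stokes-projection estimate \eqref{Stokes-type projection}, the consistency estimates of Lemma \ref{n beta int tn-1 tn+1} with $r=1$, and the already-established error bounds. Concretely: the terms $(\psi^u_{n,\alpha},\psi^w_{n,\beta})$ and $\rho\widehat{k}_n(\psi^u_{n,\beta},\psi^w_{n,\beta})$ produce $\widehat{k}_n\|\widehat{k}_n^{-1}\psi^u_{n,\alpha}\|^2$ and $\widehat{k}_n\|\nabla\psi^u_{n,\beta}\|^2$, controlled by Theorems \ref{Intermediate conclusion} and \ref{Intermediate conclusion 2}; the projection contributions $\widehat{k}_n^{-1}(\eta^u_{n,\alpha},\psi^w_{n,\beta})$ and $\rho\widehat{k}_n(\eta^u_{n,\beta},\psi^w_{n,\beta})$ are $\mathcal{O}(h^{2r+4})$ by \eqref{Stokes-type projection} together with the integral bounds of \eqref{derivation-term-eq2}; the convection difference is split by skew-symmetry as $\nu b(\eta^u_{n,\beta}-\psi^u_{n,\beta},u_{n,\beta},\psi^w_{n,\beta})+\nu b(u_{n,\beta}^h,\eta^u_{n,\beta}-\psi^u_{n,\beta},\psi^w_{n,\beta})$ and estimated via \eqref{b 1 1 1}, producing $\widehat{k}_n\|\nabla\psi^u_{n,\beta}\|^2$ and $\widehat{k}_n\|\nabla\eta^u_{n,\beta}\|^2$ with coefficients involving $\|u\|_{\infty,1}^2$ and $\|\nabla u_{n,\beta}^h\|^2$; the cubic difference is bounded exactly as in \eqref{cubic term inequality 1}; and $\tau(u,w,p,\psi^w_{n,\beta})$ is handled term by term as in the proof of Lemma \ref{truncation error estimate}, yielding $k_{\max}^4$-order contributions. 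Here Theorem \ref{Intermediate conclusion 2} is invoked to ensure that $\|\nabla u_{n,\beta}^h\|$ (hence $\|u_{n,\beta}^h\|_{L^4}$) is uniformly bounded, via $\|u_{n,\beta}^h\|_1\le\|u_{n,\beta}\|_1+\|e_{n,\beta}^u\|_1$. Summing over $n$ and absorbing then gives \eqref{lemma-p-conclusion} directly, with no discrete Gr\"onwall step required since no right-hand term carries a non-absorbable multiple of $\sum_n\widehat{k}_n\|\nabla\psi^w_{n,\beta}\|^2$.

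The main obstacle is bookkeeping rather than a new idea: one must verify that \emph{every} term produced with $\psi^w_{n,\beta}$ as test function is controllable using only the previously proven $L^2$ and $H^1$ velocity estimates of Theorems \ref{Intermediate conclusion}--\ref{Intermediate conclusion 2} together with \eqref{Stokes-type projection}, so that the estimate closes in a single pass. The two points that genuinely require care are (i) the cancellation of the discrete pressure term, which relies on $w_n^h\in V_h$ in Scheme \ref{fully discrete formulations scheme second}, and (ii) the uniform bound on $\|\nabla u_{n,\beta}^h\|$ needed in the convection and cubic estimates, which is exactly what the $H^1$ velocity estimate of Theorem \ref{Intermediate conclusion 2} provides. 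This lemma is the missing ingredient for the subsequent pressure error estimate, where the $LBB^h$ condition \eqref{LBB} converts a bound on $\psi^w_{n,\beta}$ (and on the momentum residual) into a bound on $\psi^p_{n,\beta}$.
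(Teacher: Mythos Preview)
Your proposal is correct and follows essentially the same route as the paper: test \eqref{error equation 1} with $v^h=\psi^w_{n,\beta}\in V_h$, use \eqref{error equation 2} to convert $\mu(\nabla\psi^u_{n,\beta},\nabla\psi^w_{n,\beta})$ into $\mu\|\psi^w_{n,\beta}\|^2$, drop the pressure term by discrete divergence-freeness, and bound each remaining term by a small multiple of $\gamma\|\nabla\psi^w_{n,\beta}\|^2$ plus quantities already controlled by Theorems \ref{Intermediate conclusion} and \ref{Intermediate conclusion 2}. The only cosmetic difference is that the paper groups the $\psi^u$- and $\eta^u$-contributions into the full error $e^u$ throughout (writing, e.g., $(\widehat{k}_n^{-1}e^u_{n,\alpha},\psi^w_{n,\beta})$ and decomposing the convection as $-b(e^u_{n,\beta},u_{n,\beta},\psi^w_{n,\beta})-b(e^u_{n,\beta},e^u_{n,\beta},\psi^w_{n,\beta})-b(u_{n,\beta},e^u_{n,\beta},\psi^w_{n,\beta})$), whereas you keep $\psi^u$ and $\eta^u$ separate; both organizations lead to the same estimates and neither requires a Gr\"onwall step.
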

    \begin{proof}
        We set \( v^h = \psi^w_{n,\beta} \) in \eqref{error equation 1} and \( \varphi^h = \psi^w_{n,\beta} \) in \eqref{error equation 2} and use the fact $\psi^w_{n,\beta} \in V_h$ to derive 
        \begin{align} 
                &\mu \| \psi^w_{n,\beta} \|^{2}
                +\gamma \| \nabla \psi^w_{n,\beta} \|^{2}
                \label{lemma-p-eq1} \\
                &=  - \frac{1}{\widehat{k}_n}(e^u_{n,\alpha}, \psi^w_{n,\beta}) 
                +\nu b(u_{n,\beta}, u_{n,\beta}, \psi^w_{n,\beta})
                -\nu b(u_{n,\beta}^h, u_{n,\beta}^h, \psi^w_{n,\beta})
                - \rho (e^u_{n,\beta},\psi^w_{n,\beta}) 
                \notag \\
                & \quad + \lambda (|u_{n,\beta}|^2 u_{n,\beta}  - |u_{n,\beta}^h|^2 u_{n,\beta}^h ,v^h)
                - \tau(u,w,p, \psi^w_{n,\beta}). \notag 
        \end{align}
        We multiply \eqref{lemma-p-eq1} by $\widehat{k}_n$ and sum the resulting equality over $n$ from $1$ to $m-1$
        \begin{align} 
				&\sum_{n=1}^{m-1}  \mu \widehat{k}_n \|\psi^w_{n,\beta}\|^2 
				+ \sum_{n=1}^{m-1}  \gamma \widehat{k}_n \|\nabla \psi^w_{n,\beta}\|^2 
				\label{zeq:0601-4.40} \\
				=&
				-\sum_{n=1}^{m-1} \widehat{k}_n \big[ (\widehat{k}_n^{-1} e^u_{n,\alpha},\psi^w_{n,\beta})
				+ \rho (e^u_{n,\beta},\psi^w_{n,\beta}) \big]
                +\sum_{n=1}^{m-1} \widehat{k}_n \lambda 
				(|u_{n,\beta}|^2 u_{n,\beta} - |u^h_{n,\beta}|^2u^h_{n,\beta},\psi^w_{n,\beta})
				\notag \\
				&+\sum_{n=1}^{m-1} \nu\widehat{k}_n( b(u_{n,\beta},u_{n,\beta},\psi^w_{n,\beta})
				- b(u^h_{n,\beta},u^h_{n,\beta},\psi^w_{n,\beta}))
				- \sum_{n=1}^{m-1} \widehat{k}_n \tau(u,w,p,\psi^w_{n,\beta}). \notag 
		\end{align}
        By Cauchy-Schwarz inequality, Poincar\'e inequality and Young's inequality 
		\begin{align} \label{lemma-p-eq3}
			\begin{split}
				&\sum_{n=1}^{m-1} \widehat{k}_n (\widehat{k}_n^{-1} e^u_{n,\alpha},\psi^w_{n,\beta})
				+ \sum_{n=1}^{m-1}\widehat{k}_n \rho (e^u_{n,\beta},\psi^w_{n,\beta}) \\
				\leq& \frac{C}{\gamma} \sum_{n=1}^{m-1} \widehat{k}_n \| \widehat{k}_n^{-1} e^u_{n,\alpha} \|^{2} 
                + \frac{C |\rho|}{\gamma} \sum_{n=1}^{m-1} \widehat{k}_n \| \nabla e^u_{n,\beta} \|
				+ \frac{\gamma}{8} \sum_{n=1}^{m-1} \widehat{k}_n  \| \nabla \psi^w_{n,\beta}\|^2 
			\end{split}
		\end{align}
        By similar argument to \eqref{cubic term inequality 1}
        \begin{align} \label{lemma-p-eq4}
            \begin{split}
			&\sum_{n=1}^{m-1}	\widehat{k}_n \lambda (|u_{n,\beta}|^2 u_{n,\beta} - |u^h_{n,\beta}|^2u^h_{n,\beta}, \psi^w_{n,\beta}) 
			\\
            \leq& C \lambda \sum_{n=1}^{m-1} \widehat{k}_n \big( \| u_{n,\beta} \|_{L^{4}}^{2} 
            + \| e_{n,\beta}^{u} \|_{1}^{2} \big) \| \nabla e_{n,\beta}^{u} \| 
            \| \nabla \psi^w_{n,\beta} \|
            \\ 
			\leq& \frac{C \lambda^2}{\gamma} \sum_{n=1}^{m-1}	\widehat{k}_n 
			\big( \|u_{n,\beta}\|_{L^4}^4 + \| e_{n,\beta}^{u} \|_{1}^{4} \big)
			\| \nabla e_{n,\beta}^u\|^2
			+\frac{\gamma}{8} \sum_{n=1}^{m-1} \widehat{k}_n  \|\nabla \psi^w_{n,\beta}\|^2 \\
			\leq& \frac{C(\theta) \lambda^2}{\gamma} \big( \| u \|_{L^{\infty}(L^4)}^{4} 
            + \| e^u \|_{\infty,1}^{4} \big)
			\sum_{n=1}^{m-1}	\widehat{k}_n \| \nabla e_{n,\beta}^u\|^2
			+ \frac{\gamma}{8} \sum_{n=1}^{m-1} \widehat{k}_n  \|\nabla \psi^w_{n,\beta}\|^2.
            \end{split}
		\end{align}
		By \eqref{b 1 1 1} and Young's inequality
		\begin{align}
			&\sum_{n=1}^{m-1} \nu\widehat{k}_n( b(u_{n,\beta},u_{n,\beta},\psi^w_{n,\beta})
			- b(u^h_{n,\beta},u^h_{n,\beta},\psi^w_{n,\beta}))
			\label{lemma-p-eq5} \\
			=& \sum_{n=1}^{m-1}  \nu \widehat{k}_n \big( - b(e_{n,\beta}^{u},u_{n,\beta},\psi^w_{n,\beta}) 
			- b(e_{n,\beta}^u,e_{n,\beta}^{u},\psi^w_{n,\beta}) 
			- b(u_{n,\beta},e_{n,\beta}^{u},\psi^w_{n,\beta})  \big) 
			\notag \\
			\leq&
			\sum_{n=1}^{m-1}C \widehat{k}_n  \nu(\|\nabla u_{n,\beta}\| + \|\nabla e_{n,\beta}^u \|)\|\nabla e^u_{n,\beta}\|\|\nabla \psi^w_{n,\beta}\| 
			\notag \\
			\leq&
			\frac{C(\theta) \nu^2}{\gamma} \big( \| \nabla u \|_{\infty,0}^2 + \| \nabla e^{u} \|_{\infty,0}^2 \big) \sum_{n=1}^{m-1} \widehat{k}_n \|  \nabla e_{n,\beta}^u\|^2
			+\frac{\gamma}{8} \sum_{n=1}^{m-1} \widehat{k}_n  \|\nabla \psi^w_{n,\beta}\|^2. 
			\notag
		\end{align}
		By similar argument to \eqref{truncation error estimate equation}
		\begin{align} 
		&\sum_{n=1}^{m-1} \widehat{k}_n  |\tau(u,w,p,\psi^w_{n,\beta})|
		\label{lemma-p-eq6} \\
		\leq&
		\frac{\gamma}{8} \sum_{n=1}^{m-1} \widehat{k}_n  \| \nabla \psi^w_{n,\beta}\|^2
		+ \frac{C(\theta)}{\gamma} \big( \nu^2 \| u \|_{\infty,2} + \lambda^2 \| u \|_{L^{\infty}(L^{4})}^{4} \big) k_{\rm{max}}^4 \| \nabla u_{tt} \|_{2,0}^{2}
        \notag \\
		+& \frac{C(\theta) k_{\rm{max}}^4}{\gamma} \big( \|u_{ttt}\|_{2,0}^2 
        + \mu^2 \|\nabla u_{tt}\|_{2,0}^2 + \rho^2 \|u_{tt}\|_{2,0}^2 
        + \gamma^{2} \|\nabla w_{tt}\|_{2,0}^2 + \| p_{tt}\|_{2,0}^2 + \|f_{tt}\|_{2,0}^2 \big). \notag 
		\end{align}
        By \eqref{zeq:0601-4.40} - \eqref{lemma-p-eq6} and error estimates 
        in \eqref{derivation-term-conclusion}, \eqref{proof of max nabla u-u^h T-T^h error}, 
        we have \eqref{lemma-p-conclusion}. 
    \end{proof}

    \begin{theorem}\label{Intermediate conclusion 4}
        Suppose the assumption of \ref{u_p_T_space}, \ref{time_condition_2} and 
        conditions \eqref{time-diameter-condition} hold. 
        If the time step size and uniform mesh diameter are sufficiently small such that 
        the restriction in \eqref{time-condition-H1} for all $1 \leq n \leq M-1$, 
        then Scheme \ref{fully discrete formulations scheme second} has the following error estimate for pressure 
        \begin{equation} \label{nabla p-p^h error}
            \sum_{n=1}^{M-1} \widehat{k}_n
            (\|p(t_{n,\beta})- p_{n,\beta}^h\|^2)
            \leq \mathcal{O}(h^{2r+2} + k_{\max}^4).
        \end{equation}
    \end{theorem}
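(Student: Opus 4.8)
The plan is to recover $\|\psi^p_{n,\beta}\|$ from the $LBB^h$ condition \eqref{LBB} applied to the error equation \eqref{error equation 1}, and then pass to $\|p(t_{n,\beta})-p^h_{n,\beta}\|$ through the decomposition $p(t_{n,\beta})-p^h_{n,\beta}=\big(p(t_{n,\beta})-p_{n,\beta}\big)+\eta^p_{n,\beta}-\psi^p_{n,\beta}$. The time-consistency piece $\|p(t_{n,\beta})-p_{n,\beta}\|$ is controlled by Lemma \ref{n beta int tn-1 tn+1} applied to $p$, giving $\widehat k_n\|p(t_{n,\beta})-p_{n,\beta}\|^2\le C(\theta)\,\widehat k_n(k_n+k_{n-1})^3\int_{t_{n-1}}^{t_{n+1}}\|p_{tt}\|^2\,\mathrm dt$, which sums to $\mathcal O(k_{\max}^4)$ under \eqref{time-diameter-condition}; the projection piece $\|\eta^p_{n,\beta}\|$ is $\mathcal O(h^{r+1})$ by \eqref{Stokes-type projection}, so $\sum\widehat k_n\|\eta^p_{n,\beta}\|^2=\mathcal O(h^{2r+2})$. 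Thus everything reduces to proving $\sum_{n=1}^{m-1}\widehat k_n\|\psi^p_{n,\beta}\|^2\le\mathcal O(h^{2r+2}+k_{\max}^4)$.

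For that, note that \eqref{error equation 1} holds for \emph{every} $v^h\in X_h$, not only for divergence-free test functions, so the pressure term survives; solving it for $(\psi^p_{n,\beta},\nabla\cdot v^h)$ expresses this quantity through $\tfrac1{\widehat k_n}(\psi^u_{n,\alpha},v^h)$, $\mu(\nabla\psi^u_{n,\beta},\nabla v^h)$, $\gamma(\nabla\psi^w_{n,\beta},\nabla v^h)$, $\rho(\psi^u_{n,\beta},v^h)$, the projection terms $\tfrac1{\widehat k_n}(\eta^u_{n,\alpha},v^h)$ and $\rho(\eta^u_{n,\beta},v^h)$, the nonlinear difference $\nu\big(b(u^h_{n,\beta},u^h_{n,\beta},v^h)-b(u_{n,\beta},u_{n,\beta},v^h)\big)$, the cubic difference, and $\tau(u,w,p,v^h)$. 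I would then bound each of these by a constant times (a known quantity) times $\|\nabla v^h\|$ and divide in \eqref{LBB}: Cauchy--Schwarz and Poincar\'e give factors $\|\widehat k_n^{-1}\psi^u_{n,\alpha}\|$, $\|\nabla\psi^u_{n,\beta}\|$, $\|\nabla\psi^w_{n,\beta}\|$, $\|\widehat k_n^{-1}\eta^u_{n,\alpha}\|$, $\|\eta^u_{n,\beta}\|$; the nonlinear difference, split as $-\nu b(e^u_{n,\beta},u_{n,\beta},v^h)-\nu b(u^h_{n,\beta},e^u_{n,\beta},v^h)$, is bounded via \eqref{b 1 1 1} by $C\nu\big(\|\nabla u_{n,\beta}\|+\|\nabla u^h_{n,\beta}\|\big)\|\nabla e^u_{n,\beta}\|\|\nabla v^h\|$; the cubic difference is handled exactly as in \eqref{cubic-term-H1} (with $v^h$ in place of $\widehat k_n^{-1}\psi^u_{n,\alpha}$ and $\|v^h\|_{L^4}\le C\|\nabla v^h\|$), giving $C\lambda\big(\|u_{n,\beta}\|_{L^4}^2+\|e^u_{n,\beta}\|_1^2\big)\|\nabla e^u_{n,\beta}\|\|\nabla v^h\|$; and $\tau(u,w,p,v^h)$ is treated term by term as in Lemma \ref{truncation error estimate} and Lemma \ref{analytical b - exact b with t n beta}, each piece producing $C(\theta)(k_n+k_{n-1})^{3/2}$ times the $L^2$-in-time norm of a first/second/third time derivative on $[t_{n-1},t_{n+1}]$ times $\|\nabla v^h\|$.

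Squaring, multiplying by $\widehat k_n$ and summing from $n=1$ to $m-1$, the prefactors $\|\nabla u_{n,\beta}\|$, $\|\nabla u^h_{n,\beta}\|$, $\|u_{n,\beta}\|_{L^4}$, $\|e^u_{n,\beta}\|_1$ are all uniformly bounded (by Assumption \ref{u_p_T_space} together with \eqref{proof of max nabla u-u^h T-T^h error}, which makes $\|e^u_{n,\beta}\|_1\le C$ and hence $\|\nabla u^h_{n,\beta}\|\le C$), so the velocity contributions collapse to $\sum\widehat k_n\|\widehat k_n^{-1}\psi^u_{n,\alpha}\|^2$, $\sum\widehat k_n\|\nabla\psi^u_{n,\beta}\|^2$ and $\sum\widehat k_n\|\nabla e^u_{n,\beta}\|^2$, each $\mathcal O(h^{2r+2}+k_{\max}^4)$ by Theorems \ref{Intermediate conclusion} and \ref{Intermediate conclusion 2}; the $w$-contribution collapses to $\sum\widehat k_n\|\nabla\psi^w_{n,\beta}\|^2=\mathcal O(h^{2r+2}+k_{\max}^4)$ by Lemma \ref{Intermediate conclusion 3}; the projection contributions are $\mathcal O(h^{2r+2})$ by \eqref{Stokes-type projection}; and the truncation contributions are $\mathcal O(k_{\max}^4)$ because $\widehat k_n(k_n+k_{n-1})^3\le Ck_{\max}^4$ under \eqref{time-diameter-condition} and the integrals telescope to $\|u_{ttt}\|_{2,0}^2$, $\mu^2\|\nabla u_{tt}\|_{2,0}^2$, $\rho^2\|u_{tt}\|_{2,0}^2$, $\gamma^2\|\nabla w_{tt}\|_{2,0}^2$, $\|p_{tt}\|_{2,0}^2$, $\|f_{tt}\|_{2,0}^2$, all finite by Assumption \ref{u_p_T_space}. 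Collecting these proves $\sum\widehat k_n\|\psi^p_{n,\beta}\|^2\le\mathcal O(h^{2r+2}+k_{\max}^4)$ and hence \eqref{nabla p-p^h error}.

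I expect the main obstacle to be bookkeeping rather than a new idea: the ``acceleration'' term $\tfrac1{\widehat k_n}(\psi^u_{n,\alpha},v^h)$ admits no pointwise-in-time bound, so the whole estimate hinges on the time-summed quantity $\sum\widehat k_n\|\widehat k_n^{-1}\psi^u_{n,\alpha}\|^2$ supplied by the $H^1$-velocity estimate \eqref{proof of max nabla u-u^h T-T^h error}, which itself relied on the extra mesh/step conditions \eqref{time-diameter-condition}--\eqref{time-condition-H1}. One must also be careful to absorb the nonlinear and cubic prefactors using the already-proven $H^1$ velocity bound (so that \eqref{b 1 1 1} and the Sobolev embedding suffice and no inverse inequality is invoked), which keeps the order at $h^{2r+2}+k_{\max}^4$; no further smallness assumption on $k_n$ or $h$ beyond those already imposed is needed.
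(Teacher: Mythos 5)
Your proposal is correct and follows essentially the same route as the paper: isolate $(\psi^p_{n,\beta},\nabla\cdot v^h)$ from the error equation \eqref{error equation 1}, bound every right-hand term by a known quantity times $\|\nabla v^h\|$, invoke the $LBB^h$ condition \eqref{LBB}, and then sum in time using the previously established velocity, acceleration, and $w$ estimates from Theorems \ref{Intermediate conclusion}, \ref{Intermediate conclusion 2} and Lemma \ref{Intermediate conclusion 3}, finishing with the triangle inequality, the projection bound \eqref{Stokes-type projection}, and the consistency Lemma \ref{n beta int tn-1 tn+1} for $p$. The only difference is organizational (you perform the final three-way decomposition of $p(t_{n,\beta})-p^h_{n,\beta}$ up front rather than at the end), which changes nothing of substance.
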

    \begin{proof}
        By \eqref{error equation 1}, definition of Stokes-type projection 
        $\mathcal{S}_h p_{n,\beta}$ in \eqref{Stokes-type-projection-defination},
        Poincar\'e inequality and similar argument to \eqref{lemma-p-eq4} - \eqref{lemma-p-eq6} 
        \begin{align}
                & \big( \psi_{n,\beta}^p, \nabla \cdot v^h \big)  \label{p - Shp n beta} \\
                =&
                \big({\widehat{k}_n}^{-1} e^u_{n,\alpha}, v^h \big) 
                +\mu (\nabla e^u_{n,\beta}, \nabla v^h) 
                +\gamma (\nabla e^w_{n,\beta}, \nabla v^h)
                +\rho(e^u_{n,\beta},v^h) + (\eta_{n,\beta}^p, \nabla \cdot v^h)
                \notag \\
                &+\nu b(u_{n,\beta}^h, u_{n,\beta}^h, v^h) - \nu b(u_{n,\beta}, u_{n,\beta}, v^h)
                -\lambda(|u_{n,\beta}|^2 u_{n,\beta} - |u^h_{n,\beta}|^2u^h_{n,\beta},v^h)
                \notag \\
                &+ \tau(u, w,p, v^h) \notag  \\
                \leq& C(\theta) \| \nabla v^h \| \Big\{ \| {\widehat{k}_n}^{-1} e^u_{n,\alpha} \| 
                + \mu \| \nabla e^u_{n,\beta} \| 
                + \gamma \| \nabla e^w_{n,\beta} \| 
                + |\rho| \| \nabla e^u_{n,\beta} \|  
                + C \| \eta_{n,\beta}^p \| \notag \\
                &+ \nu \big( \| \nabla u \|_{\infty,0} + \| \nabla e^{u} \|_{\infty,0} \big) 
                \|\nabla e^u_{n,\beta}\|  
                + \lambda \big( \| u_{n,\beta} \|_{L^{\infty}(L^4)}^{2}
                + \| e_{n,\beta}^{u} \|_{\infty,1}^{2} \big) \| \nabla e_{n,\beta}^{u} \| 
                \notag \\
                &+ \big( \nu \| u \|_{\infty,2} + \lambda \| u \|_{L^{\infty}(L^{4})}^{2} \big)
                \Big( k_{\rm{max}}^3 \int_{t_{n-1}}^{t_{n+1}}\|\nabla u_{tt}\|^2  \mathrm{d}t \Big)^{\frac{1}{2}}   \notag \\
				&+ \Big[ \!k_{\rm{max}}^3 \!\!\! \int_{t_{n-1}}^{t_{n+1}} \!\!\!\big(
				\|u_{ttt}\|^2 \!+\! \mu^2 \|\nabla u_{tt}\|^2 \!+\! \rho^2 \|u_{tt}\|^2 \!+\! \gamma^{2} \|\nabla w_{tt}\|^2 \!+\! \| p_{tt}\|^2 \!+\! \|f_{tt}\|^2
				\big)	 \mathrm{d}t \! \Big]^{\frac{1}{2}} \Big\}. \notag 
        \end{align}
        By the $LBB^h$ condition in \eqref{LBB}, 
        \begin{align}
            \label{p-LBB}
            \|\psi_{n,\beta}^p\| \leq C \sup_{v^h \in X_h \setminus \{0\}} \frac{(\nabla \cdot v^h, \psi_{n,\beta}^p )}{\|\nabla v^h\|}.
        \end{align}
        Hence we combine \eqref{p - Shp n beta} and \eqref{p-LBB} to have 
        \begin{align}
            &\sum_{n=1}^{M-1}\widehat{k}_n \| \psi_{n,\beta}^p \|^{2}  \\
            &\leq C(\theta) \sum_{n=1}^{M-1}\widehat{k}_n 
            \big( \| {\widehat{k}_n}^{-1} e^u_{n,\alpha} \|^{2} 
            + \mu^{2} \| \nabla e^u_{n,\beta} \|^{2} 
            + \gamma^{2} \| \nabla e^w_{n,\beta} \|^{2} + \rho^{2} \| \nabla e^u_{n,\beta} \|^{2}  \big)  \notag \\
            &+ C(\theta) T h^{2r+2} \big( \| u \|_{\infty,r+2} + \| w \|_{\infty,r+2} 
            + \| p \|_{\infty,r+1} \big)^{2} \notag \\
            &+ C(\theta) \Big[\nu^2 \big( \| \nabla u \|_{\infty,0}^2 + \| \nabla e^{u} \|_{\infty,0}^2 \big)
            + \lambda^2 \big( \| u_{n,\beta} \|_{L^{\infty}(L^4)}^{4}
            + \| e_{n,\beta}^{u} \|_{\infty,1}^{4} \big) \Big]
            \sum_{n=1}^{M-1}\widehat{k}_n \|\nabla e^u_{n,\beta}\|^2 \notag \\
            &+ C(\theta)\big( \nu^2 \| u \|_{\infty,2}^2 + \lambda^2 \| u \|_{L^{\infty}(L^{4})}^{4} \big)
            k_{\rm{max}}^4 \|\nabla u_{tt}\|_{2,0}^2 \notag \\
            &+ C(\theta) k_{\rm{max}}^4 \big(  \|u_{ttt}\|_{2,0}^2 
            \!+\! \mu^2 \|\nabla u_{tt}\|_{2,0}^2 \!+\! \rho^2 \|u_{tt}\|_{2,0}^2 
            \!+\! \gamma^{2} \|\nabla w_{tt}\|_{2,0}^2 \!+\! \| p_{tt}\|_{2,0}^2 
            \!+\! \|f_{tt}\|_{2,0}^2  \big). \notag 
        \end{align}
        By error estimates in \eqref{proof of max u-u^h T-T^h error}, \eqref{proof of max nabla u-u^h T-T^h error}, \eqref{lemma-p-conclusion},  triangle inequality, approximation of 
        $\mathcal{S}_h p_{n,\beta}$ in \eqref{Stokes-type projection} 
        and Lemma \ref{n beta int tn-1 tn+1}, we achieve \eqref{nabla p-p^h error}.
    \end{proof}

    \section{Numerical results}  
    \label{sec:sec5}
    In this section, we present several numerical experiments to validate the theoretical analysis and assess the performance of the equivalent divergence-free DLN 
    scheme (Scheme \ref{fully discrete formulations scheme}) with the parameter $\theta = 0.3$.
    We utilize Taylor-Hood $\mathbb{P}2/\mathbb{P}1$ finite element space for spatial discretization among all the experiments. 
    We first perform a convergence test for the Stokes-type projection $\mathcal{S}_h$
    in \eqref{Stokes-type-projection-defination}, followed by another experiment to validate both spatial and temporal convergence rate of Scheme \ref{fully discrete formulations scheme}. 
    Then we investigate the self-organization dynamics of active fluid in a two-dimensional domain \cite{MR4736040} with random initial conditions to evaluate the robustness of 
    Scheme \ref{fully discrete formulations scheme} and efficiency of the time adaptive strategy based on the minimal dissipation criterion. 

    \subsection{Convergence test of Stokes type projection}
    To confirm that $\mathcal{S}_h$ satisfies the error estimate stated in \eqref{Stokes-type projection}, we implement the equivalent algorithm of $\mathcal{S}_h$
    in \eqref{Stokes-type-projection-defination 2} with parameters $\mu = \gamma = 1$, and construct the test problem having the following exact solutions on the unit square domain $D = [0,1]^2$
    \begin{align*}
        &\begin{bmatrix}
            u_{1} \\ u_{2}
        \end{bmatrix} = 
        \begin{bmatrix}
            ( -\cos(2\pi x+\pi)-1)\sin(2\pi y) \\
            -\sin(2\pi x) \cos(2\pi y)
        \end{bmatrix},  \\
        &\begin{bmatrix}
            w_{1} \\ w_{2}
        \end{bmatrix} = 
        \begin{bmatrix}
            -3x^2 + 3y^2 + 8\pi^2 \sin(2\pi y) \cos(2\pi x) - 4\pi^2 \sin(2\pi y) \\
            6xy - 8\pi^2\sin(2\pi x)\cos(2\pi y)
        \end{bmatrix},  \\
        &\ \ \phi = x^3-3xy^2, \quad  p = -\cos(2\pi x) - \cos(2 \pi y).
    \end{align*}
    We set mesh diameter $h = \frac{1}{128}, \frac{1}{256}, \frac{1}{512}, \frac{1}{1024}$ and obtain the corresponding errors in \( L^2 \) and  \( H^1 \)-norm, along with the convergence rates in \Cref{Stokes type $L^2$-errors and convergence rates in space,Stokes type $H^1$-errors and convergence rates in space}.
    The numerical results confirm the error estimate in \eqref{Stokes-type projection}:
    the velocity $u$ and the auxiliary variable $w$ achieve third-order accuracy in the $L^2$-norm,
    and second-order accuracy in $H^1$-norm while the pressure $p$ has second-order accuracy in $L^2$-norm.
    \begin{table}[h]
        \centering
        \caption{\(L^2\)-errors and convergence rates of the Stokes-type system in space}
        \label{Stokes type $L^2$-errors and convergence rates in space}
        \begin{tabular}{@{}lllllllll@{}}
            \hline
            \(1 / h\) & \( \|u - u^h\| \) & Rate & \( \|w - w^h\| \) & Rate & \( \|\phi - \phi^h\| \) & Rate & \( \|p - p^h\| \) & Rate \\
            \hline
            128  & 1.28E-5 & —      & 9.72E-4 & —      & 3.60E-4 & —      & 9.47E-4 & — \\
            256  & 1.60E-6 & 3.0045 & 1.22E-4 & 2.9992 & 8.98E-5 & 2.0018 & 1.18E-4 & 3.0095 \\
            512  & 1.99E-7 & 3.0011 & 1.52E-5 & 2.9998 & 2.25E-5 & 2.0004 & 2.34E-5 & 2.3286 \\
            1024 & 2.49E-8 & 3.0003 & 1.90E-6 & 3.0000 & 5.61E-6 & 2.0001 & 5.64E-6 & 2.0528 \\
            \hline
        \end{tabular}
    \end{table}

    \begin{table}[h]
        \centering
        \caption{\(H^1\)-errors and convergence rates of the Stokes-type system in space}
        \label{tab:stokes_H1_errors}
        \begin{tabular}{@{}lllllllll@{}}
            \hline
            \(1 / h\) & \( \|u - u^h\|_1 \) & Rate & \( \|w - w^h\|_1 \) & Rate & \( \|\phi - \phi^h\|_1 \) & Rate & \( \|p - p^h\|_1 \) & Rate \\
            \hline
            128  & 6.18E-3 & —      & 4.75E-1 & —      & 1.78E-1 & —      & 9.47E-4 & —      \\
            256  & 1.54E-3 & 1.9992 & 1.19E-1 & 1.9992 & 8.90E-2 & 1.0003 & 9.20E-2 & 1.2700 \\
            512  & 3.86E-4 & 1.9998 & 2.97E-2 & 1.9998 & 4.45E-2 & 1.0001 & 4.47E-2 & 1.0418 \\
            1024 & 9.66E-5 & 2.0000 & 7.43E-3 & 1.9999 & 2.23E-2 & 1.0000 & 2.23E-2 & 1.0054 \\
            \hline
        \end{tabular}
        \label{Stokes type $H^1$-errors and convergence rates in space}
    \end{table}

    \subsection{Convergence test of fully-discrete DLN scheme}
    To validate the convergence rate of Scheme \ref{fully discrete formulations scheme} 
    in both space and time, we construct the test problem on the unit square domain \( D = [0,1] \times [0,1] \) with the following exact solution
    \begin{align*}
        &\begin{bmatrix}
            u_{1} \\ u_{2}
        \end{bmatrix} = 
        \begin{bmatrix}
            ( -\cos(2\pi x+\pi)-1)\sin(2 \pi y) \exp(2t) \\
            -\sin(2\pi x) \cos(2\pi y) \exp(2t)
        \end{bmatrix},  \\
        &\begin{bmatrix}
            w_{1} \\ w_{2}
        \end{bmatrix} = 
        \begin{bmatrix}
            (-3x^2 + 3y^2 + 8\pi^2 \sin(2\pi y) \cos(2\pi x) - 4\pi^2 \sin(2\pi y)) \exp(2t) \\
            (6xy - 8\pi^2\sin(2\pi x)\cos(2\pi y))\exp(2t)
        \end{bmatrix},  \\
        &\ \ \phi = (x^3-3xy^2)\exp(2t), \quad  p = \sin(3\pi^2x)\cos(3\pi^2 y)\exp(-t).
    \end{align*}
    We set the physical parameters to \( \mu = 1 \), \( \gamma = 1 \), \( \nu = 1 \), \( \rho = 1 \), and \( \lambda = 1 \) and simulate the problem on the time interval $[0,1]$.
    The exact solution decides the source function and boundary conditions.
    
    We set the constant time step size \( \Delta t = \frac{1}{8}, \frac{1}{16}, \frac{1}{32}, \frac{1}{64} \) and fix 
    the uniform mesh diameter \( h = \frac{1}{128} \) to verify the convergence rate in time. 
    Meanwhile we adjust \( h = \frac{1}{16}, \frac{1}{32}, \frac{1}{64}, \frac{1}{128} \) and keep
    \( \Delta t = 1 \times 10^{-5} \) to confirm the convergence rate in space. 
    The results from \Cref{tab:$L^2$-errors and convergence rates in time,tab:$H^1$-errors and convergence rates in time,tab:$L^2$-errors and convergence rates in space,tab:$H^1$-errors and convergence rates in space} 
    is consistent with error estimates in Section \ref{sec:sec4}: 
	the approximate velocity $u$ has second order accuracy in both $L^2$ and $H^1$-norm while the approximate pressure converges at second order in $L^2$-norm.


	\begin{table}
		\centering
		\caption{$L^2$-errors and convergence rates in time}
		\begin{tabular}{@{}lllllllll@{}}
			\hline
			$1 / \Delta t$ & $\|u - u^h\|$ & Rate & $\|w - w^h\|$ & Rate & $\|\phi- \phi^h\|$ & Rate & $\|p - p^h \|$ & Rate \\
			\hline
			$4$   & 3.01E-1 & —         & 1.59E+1 & —         & 5.15E-1 & —         & 1.59E+2  & —         \\
			$8$   & 5.72E-2 & 2.3973 & 3.02E+0 & 2.3974  & 9.78E-2 & 2.3974 & 3.01E+1  & 2.4066   \\
			$16$ & 1.18E-2 & 2.2766 & 6.22E-1 & 2.2766  & 2.02E-2 & 2.2765  & 6.19E+0  & 2.2794    \\
			$32$ & 2.61E-3 & 2.1747 & 1.38E-1 & 2.1746  & 4.48E-3 & 2.1733  & 1.37E+0 & 2.1766   \\
			\hline
		\end{tabular}
		\label{tab:$L^2$-errors and convergence rates in time}
	\end{table}

	\begin{table}
		\centering
		\caption{$H^1$-errors and convergence rates in time}
		\begin{tabular}{@{}lllllllll@{}}
			\hline
			$1 / \Delta t$ & $\| u - u^h \|_{1}$ & Rate & $\|w - w^h\|_{1}$ & Rate & $\|\phi- \phi^h\|_{1}$ & Rate & $\|p - p^h\|_{1}$ & Rate \\
			\hline
			$4$   & 2.20E+0 & —        & 1.20E+2 & —         &6.09E+0 & —         &8.23E+2   & —          \\
			$8$   & 4.19E-1 & 2.3965 & 2.27E+1 & 2.3961 &1.17E+0 & 2.3800 &1.55E+2   &2.4044   \\
			$16$ & 8.73E-2 & 2.2615 & 4.79E+0 & 2.2466 &2.88E-1  &2.0237  &3.21E+1  & 2.2775    \\
			$32$ & 2.27E-2 & 1.9459 &1.40E+0  & 1.7773 &7.13E-2  & 2.0124 & 7.15E+0  & 2.1643    \\
			\hline
		\end{tabular}
		\label{tab:$H^1$-errors and convergence rates in time}
	\end{table}

    \begin{table}
        \caption{$L^2$-errors and convergence rates in space}
        \label{tab:L2_errors_convergence_space}
        \centering
        \begin{tabular}{@{}lllllllll@{}}
            \hline
            $1 / \Delta h$ & $\|u - u^h\|$ & Rate & $\|w - w^h\|$ & Rate & $\|\phi - \phi^h\|$ & Rate & $\|p - p^h\|$ & Rate \\
            \hline
            16   & 8.12E-4 & —      & 6.07E-2 & —      & 2.29E-3 & —      & 2.79E-1 & —      \\
            32   & 1.02E-4 & 2.9930 & 7.73E-3 & 2.9739 & 3.38E-4 & 2.7603 & 4.67E-2 & 2.5797 \\
            64   & 1.28E-5 & 2.9981 & 9.71E-4 & 2.9926 & 7.81E-5 & 2.1133 & 9.74E-3 & 2.2611 \\
            128  & 1.60E-6 & 2.9996 & 1.22E-4 & 2.9980 & 1.95E-5 & 2.0532 & 2.30E-3 & 2.0816 \\
            \hline
        \end{tabular}
        \label{tab:$L^2$-errors and convergence rates in space}
    \end{table}

    \begin{table}[h]
        \centering
        \caption{$H^1$-errors and convergence rates in space}
        \label{tab:H1-errors-space}
        \begin{tabular}{@{}lllllllll@{}}
            \hline
            $1 / \Delta h$ & $\|u - u^h\|_{1}$ & Rate & $\|w - w^h\|_{1}$ & Rate 
            & $\|\phi - \phi^h\|_{1}$ & Rate & $\|p - p^h\|_{1}$ & Rate \\
            \hline
            $16$  & 9.78E-2 & —      & 7.53E+0 & —      & 1.64E-1 & —      & 2.00E+1 & —      \\
            $32$  & 2.47E-2 & 1.9881 & 1.90E+0 & 1.9883 & 7.71E-2 & 1.0863 & 1.06E+1 & 0.9215 \\
            $64$  & 6.18E-3 & 1.9969 & 4.75E-1 & 1.9970 & 3.83E-2 & 1.0093 & 4.96E+0 & 1.0887 \\
            $128$ & 1.55E-3 & 1.9992 & 1.19E-1 & 1.9992 & 1.95E-2 & 0.9738 & 2.44E+0 & 1.0250 \\
            \hline
        \end{tabular}
        \label{tab:$H^1$-errors and convergence rates in space}
    \end{table}

    \subsection{Two dimensional self-organization of active fluid}
    \label{subsec:self-organization}
    To investigate long-time stability of the fully discrete DLN scheme (Scheme \ref{fully discrete formulations scheme second} or Scheme \ref{fully discrete formulations scheme}), 
    we refer to the numerical experiment about the spatial self-organization of the bacterial active fluid on the unit square domain \cite{LSMW21_Nature}.
    After a very short period of self-adjustment, the phase space trajectory of the bacterial active fluid should rotate clockwise or counterclockwise at a constant angle to form a unidirectional vortex current and maintain the steady state thereafter. 
    The bacterial active fluid on the domain $D = [0,1]^2$ satisfies the no-slip boundary condition \( u|_{\partial D} = w|_{\partial D} = 0 \), and the following random initial condition 
    \[
    u_0(x,y) =  \big( \text{rand}(x,y), \text{rand}(x,y) \big), \qquad (x,y) \in D,
    \]
    where 'rand'  is a uniform random generator in \([-1,1]\). 
    We set the parameters of the model to be
    $\mu = 0.045,  \nu = 0.003, \beta = 0.5, \alpha = -0.81, \gamma = \mu^3$ and the external force $f = 0$ 
    We simulate the test over the time interval \( [0,1] \) with the uniform time step size of \( \Delta t = 1/100 \).
	
	Figure~\ref{fig:velocity_glyph_comparison} illustrates the evolution of the vector field of velocity and the velocity field over time. 
	Initially, the velocity field exhibits significant disorder, and then small vortices begin to form by $t = 0.03$, though the overall structure remains chaotic. 
	As the time $t$ reaches \( 0.10 \), distinct vortical patterns emerge, leading to a more structured flow. 
	Eventually, the system transitions into a fully ordered polar state after \( t = 1.00 \), which implies that the fully discrete DLN scheme is long-time stable.

    \begin{figure}[htbp] 
        \centering
        \begin{minipage}[t]{0.3\linewidth}
            \centering
            \includegraphics[width=3.5cm]{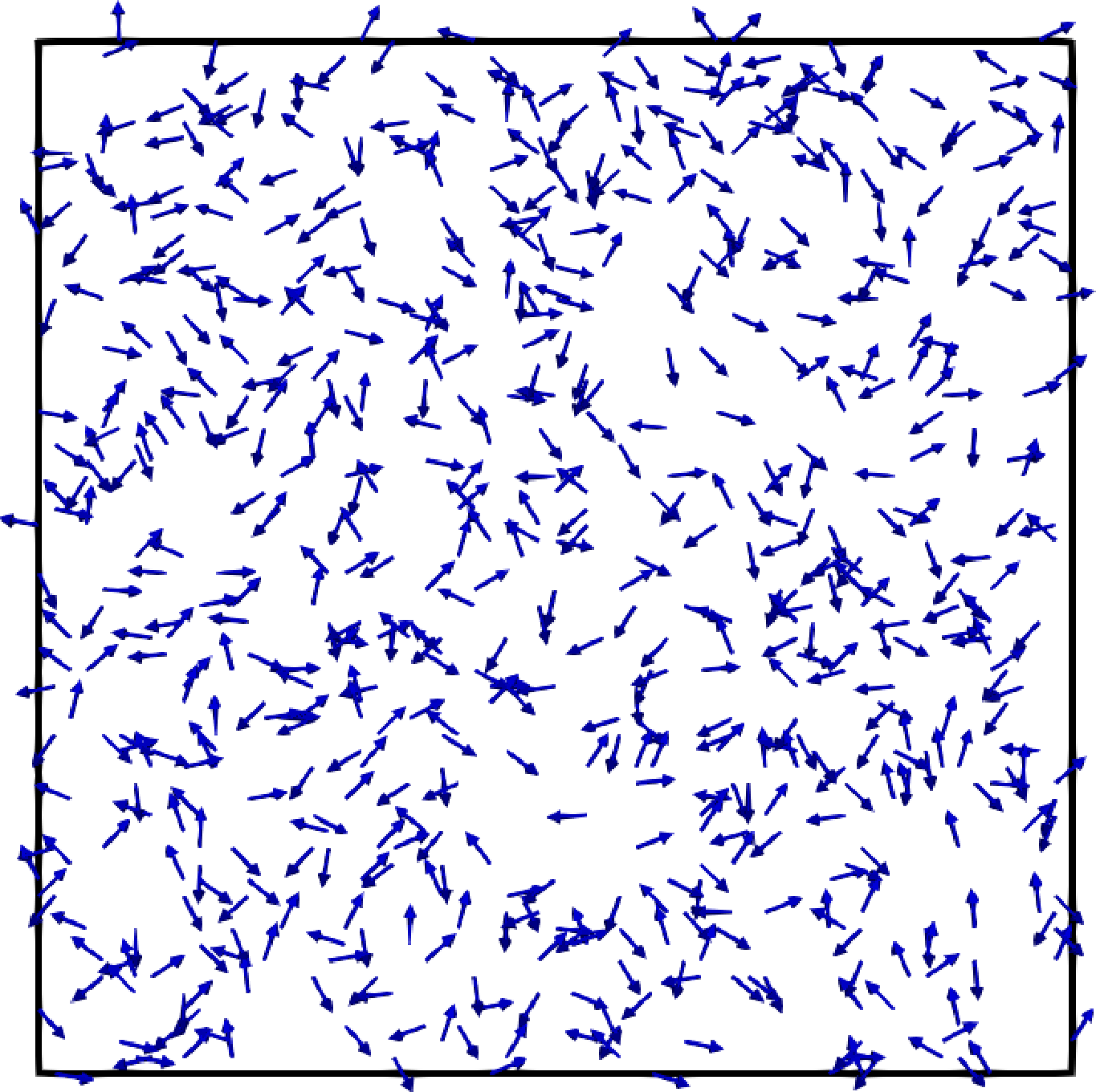} 
            \caption*{$t=0$}
        \end{minipage}
        \begin{minipage}[t]{0.3\linewidth}
            \centering
            \includegraphics[width=3.5cm]{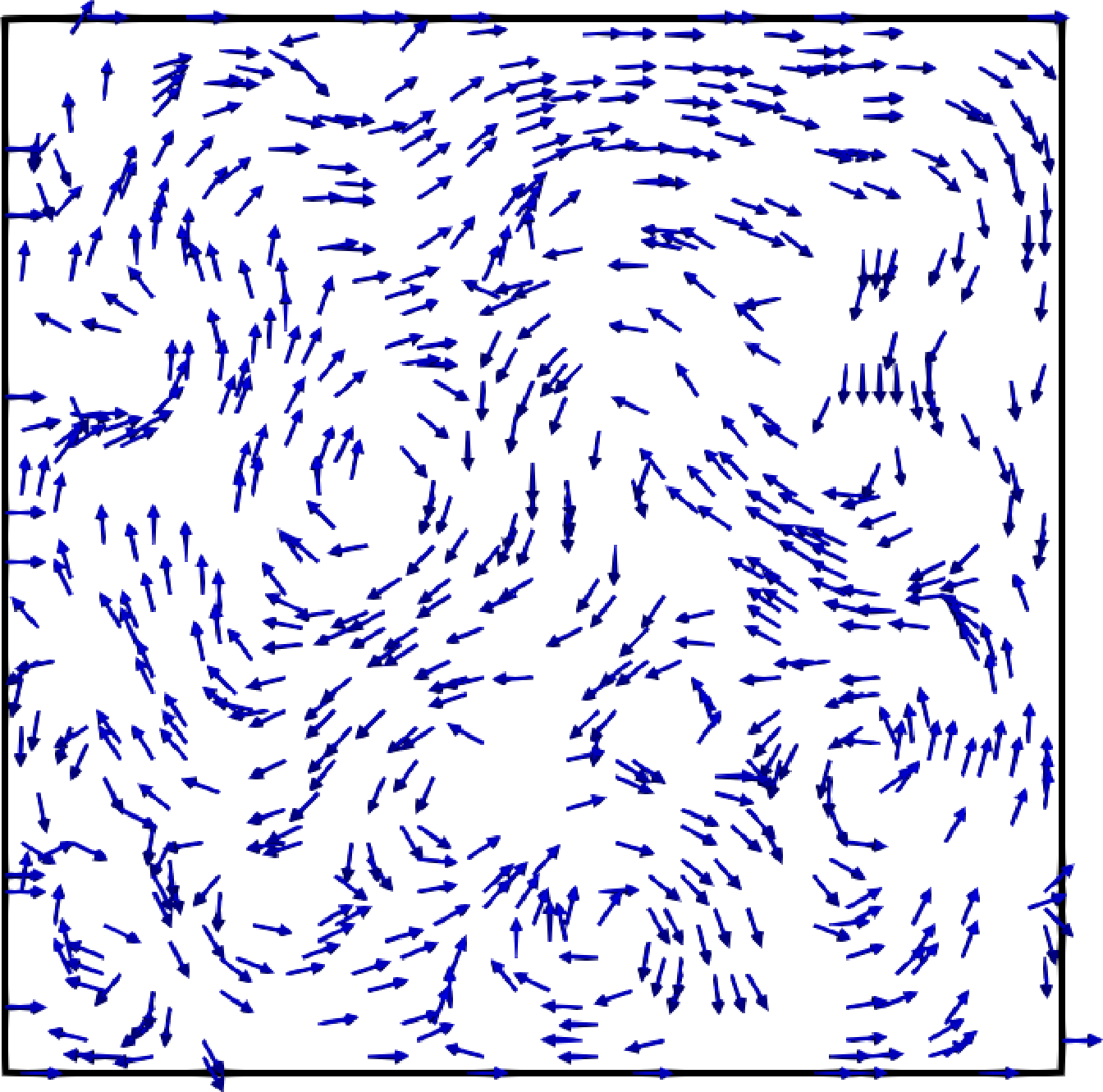} 
            \caption*{$t=0.03$}
        \end{minipage}
        \begin{minipage}[t]{0.3\linewidth}
            \centering
            \includegraphics[width=3.5cm]{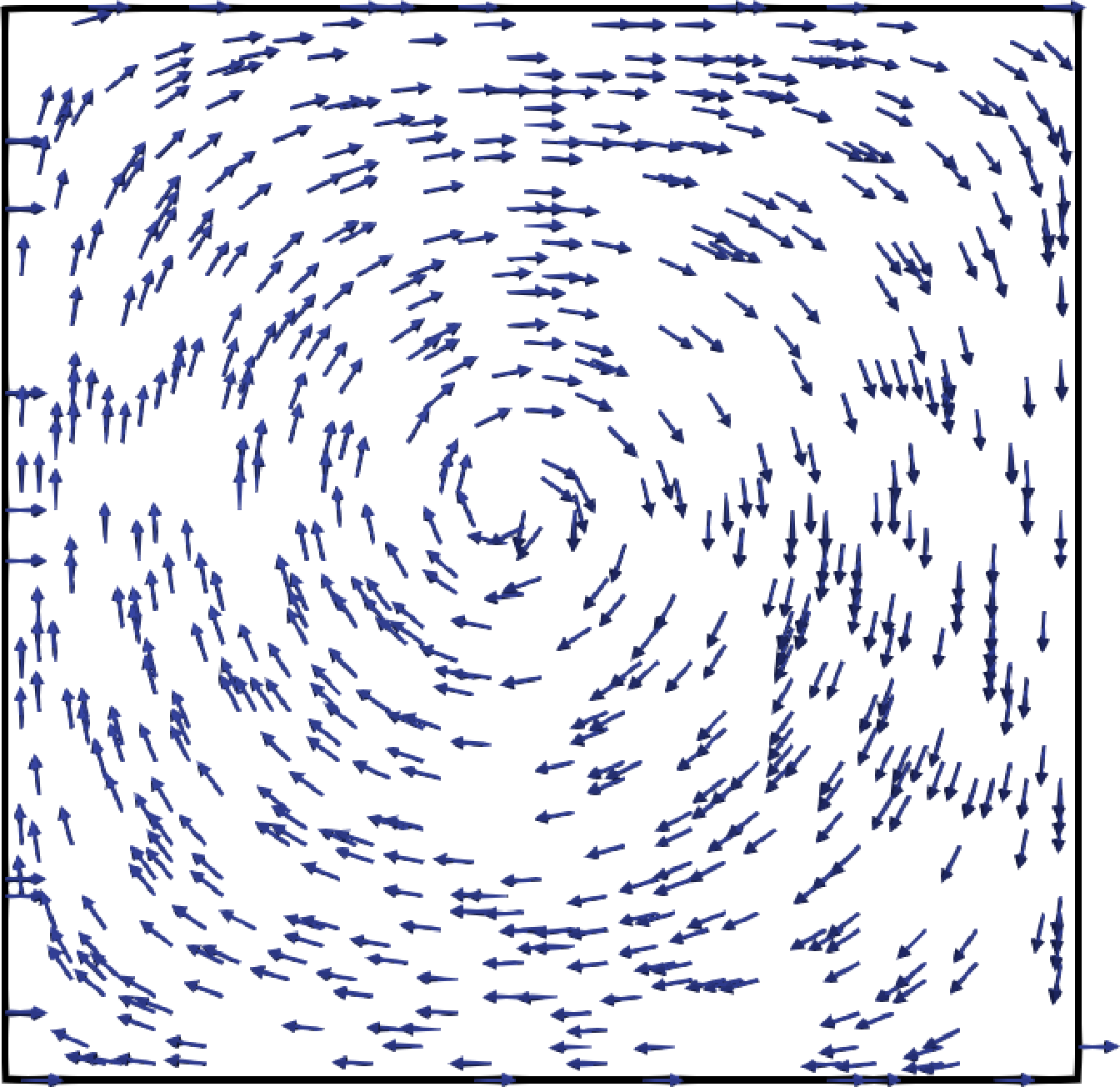} 
            \caption*{$t=1.00$}
        \end{minipage} 
        \vspace{-0.5cm}
        \caption{Time evolution of the vector field of velocity (top row) and the velocity field (bottom row) at selected time instances.}
        \label{fig:velocity_glyph_comparison}
    \end{figure}

    \subsection{Time adaptive test}
    To relieve the conflict between accuracy and computational cost as well as utilize fine properties of the DLN method under non-uniform time grids, we design a time-adaptive approach for the fully discrete DLN scheme (Scheme \ref{fully discrete formulations scheme second} or Scheme \ref{fully discrete formulations scheme}) based on the minimum dissipation criterion proposed by Capuano, Sanderse, De Angelis, and Coppola \cite{capuano2017minimum}. 
    At each time step, we compute the numerical dissipation (ND) rate for both velocity $u$ and auxiliary variable $w$,
    the viscosity-induced dissipation (VD) for $u$, generic stability-induced dissipation (SD) for $w$
    \begin{align*}
    &\text{ND: }\ \ 
    \epsilon_{ND}^{u} = \frac{1}{\widehat{k}_n} \| u_{n,\alpha}^{h} \|^2, 
    \quad
    \epsilon_{ND}^{w} = \frac{1}{\widehat{k}_n} \| w_{n,\alpha}^{h} \|^2, 
    \\
    &\text{VD: }\ \ 
    \epsilon_{VD}^{u} = \mu \| \nabla u_{n,\beta}^h \|^2,
    \quad
    \text{SD: }\ \ 
    \epsilon_{VD}^{w} = \gamma \| \nabla w_{n,\beta}^h \|^2,
    \end{align*}
    and the ratios of ND over VD and SD: $\chi_u = \epsilon_{ND}^{u} / \epsilon_{VD}^{u}$, $\chi_w = \epsilon_{ND}^{w} / \epsilon_{VD}^{w}$.
    Then we adjust the next time step $k_{n+1}$ by 
    \begin{equation}
    \label{time-controller}
    \begin{split}
    &k_{n+1} = 
    \begin{cases} 
    \min \{2 k_n, k_{\max} \}, & \text{if}~  \max \{|\chi_u|,|\chi_w|\} \leq \delta, \\
    \max \{\frac{1}{2}k_n, k_{\min} \}, & \text{if}~ \max \{|\chi_u|,|\chi_w|\} > \delta,
    \end{cases}
    \end{split}
    \end{equation}
	for the required tolerance $\delta >0$.
	We observe from \eqref{time-controller} that the strategy allows a larger time step ($k_{n+1} = 2k_{n}$) for the next operation if the ratios are below the required tolerance; otherwise, decrease the next time step by half. 
	Meanwhile, we set a maximum time step $k_{\max}$ for accuracy and a minimum time step $k_{\min}$ for efficiency. 
    
    We evaluate the performance of the proposed time-adaptive strategy via the former experiment 
    in Subsection \ref{subsec:self-organization}. 
    We set $k_{\max} = 0.01$, $k_{\min} = 1.\rm{e}-5$, $\delta = 2$, $k_{0} = k_{\min}$, carry out the experiment with different level of Reynolds number $\rm{Re} = 1/\mu$: $300, 500, 3000, 5000, 10000, 50000$, and make other parameters and conditions unchanged. 
    We also compare this approach against the corresponding constant time-stepping scheme with $10000$ time steps for the effectiveness of time adaptivity. 
	Both approaches achieve very similar results of  the evolution of the vector field of velocity and the velocity field over time interval $[0,1]$. 
	However, \Cref{Comparison of Different Reynolds Numbers with Adaptive Time Steps and Constant Time Steps} shows that the constant time-stepping scheme costs many more time steps under all levels of Reynolds number selected, which emphasizes the superiority of the time-adaptive approach.
    \begin{table}
        \centering
        \caption{The constant time-stepping scheme costs many more time steps under all levels of Reynolds number selected, which emphasizes the superiority of the time-adaptive approach.}
        \begin{tabular}{@{}lrrrrrr@{}}
            \hline
            Re$^a$  & 300 & 500 & 3000 & 5000 & 10,000& 50,000 \\
            \hline
            Adaptive$^b$   &  166 &  146 & 457& 406  & 2035&2979\\
            Constant$^c$  &10000& 10000 & 10000 & 10000  & 10000& 10000  \\
            \hline
        \end{tabular}
        \label{Comparison of Different Reynolds Numbers with Adaptive Time Steps and Constant Time Steps}
        
        \vspace{1ex}
        {\footnotesize
            $^a$ Reynolds number; \\
            $^b$ Number of computational steps with adaptive time stepping; \\
            $^c$ Number of computational steps with fixed time steps.
        }
    \end{table}

    \section{Conclusion}
    \label{sec:sec6}
    In this paper, we have developed an efficient spatio-temporal discretization scheme for an equivalent second-order reformulation of the active fluid system.
    The variable time-stepping DLN method, which is second-order accurate and unconditionally nonlinearly stable, is employed as the time integrator.
    For spatial discretization, we introduce an additional auxiliary variable, thus construct a divergence-free preserving and easily-implemented mixed finite element method.
    With the help of appropriate regularity assumptions and mild time-diameter restrictions, we have rigorously proved that the fully discrete DLN scheme is long-time stable in kinetic energy and established error estimates for velocity in $L^2$ and $H^1$-norm and pressure in $L^2$-norm.
    Furthermore, a time-adaptive strategy is designed to maintain robustness of the scheme and improve computational efficiency.
    Several numerical experiments validate our theoretical findings, demonstrating that the fully discrete DLN scheme, along with the time-adaptive approach, provides an efficient framework for solving active fluid dynamics and other more complex systems.

    \bibliographystyle{abbrv}

    \bibliography{bibliography}

\end{document}